\newtheorem{thm}{Theorem}[section]
\newtheorem{lemma}[thm]{Lemma}
\newtheorem{prop}[thm]{Proposition}
\newtheorem{cor}[thm]{Corollary}
\newtheorem{claim}[thm]{Claim}
\newtheorem{ex}[thm]{Example}
\newtheorem{rmk}[thm]{Remark}
\newcommand{\mb}{\mathbb}
\newcommand{\C}{\mb C}
\newcommand{\Pj}{\mb P}
\newcommand{\mf}{\mathfrak}
\newcommand{\g}{\mf g}
\newcommand{\h}{\mf h}
\newcommand{\m}{\mf m}
\newcommand{\mc}{\mathcal}
\newcommand{\F}{\mc F}
\newcommand{\G}{\mc G}
\newcommand{\mcE}{\mc E}
\newcommand{\mcL}{\mc L}
\newcommand{\mbf}{\mathbf }
\DeclareMathOperator{\mrrestr}{restr}
\newcommand{\hatnabla}{\widehat{\nabla}}
\newcommand{\de}{\partial}
\newcommand\restr[2]{{
  \left.\kern-\nulldelimiterspace 
  #1 
  \vphantom{\big|} 
  \right|_{#2} 
  }}
\newcommand{\jetX}{\mc{J}_X}
\newcommand{\jetXF}{\mc{J}_{X/\F}}
\newcommand{\jetY}{\mc{J}_Y}
\newcommand{\prolong}[2]{#1^{(#2)}}
\newcommand{\prolongPjntwo}{\prolong{(\Pj^n)}{2}}
\newcommand{\prolongPjqtwo}{\prolong{(\Pj^q)}{2}}
\newcommand{\germe}{\mathcal{O}}
\newcommand{\mcHom}{\mathcal{H}om}
\DeclareMathOperator{\codim}{codim}
\DeclareMathOperator{\Spec}{Spec}
\DeclareMathOperator{\Sym}{Sym}
\DeclareMathOperator{\Aff}{Aff}
\DeclareMathOperator{\PSL}{PSL}
\newcommand{\psl}{\mf{psl}}
\DeclareMathOperator{\Ad}{Ad}
\DeclareMathOperator{\Aut}{Aut}
\DeclareMathOperator{\SL}{SL}
\DeclareMathOperator{\GL}{GL}
\DeclareMathOperator{\Id}{Id}
\DeclareMathOperator{\tr}{tr}
\newcommand{\innerproduct}[2]{\left< #1, #2 \right>}
\newcommand{\ideal}[1]{\left< #1\right>}
\newcommand{\CNF}{{N^*_\F}}
\newcommand{\CTF}{\Omega_\F}
\DeclareMathOperator{\sing}{sing}
\newcommand{\df}{d_{\mc F}}
\title{Jets of flat partial connections}
\author[G. Fazoli]{Gabriel Fazoli}
\address{IMPA, Estrada Dona  Castorina, 110 -- 22460-320, Rio de Janeiro, RJ, Brazil}
\email{gabrielfazoli@gmail.com}
\keywords{Holomorphic foliation; partial connections; jets; transversely affine structures; transversely projective structure}
\subjclass[2020]{32M10, 32M25, 53C12, 53C30}
\date{}
\begin{document}

\begin{abstract}
    We define and study jets of flat partial connections in the setting of smooth foliations and flat partial connections on locally free sheaves. In the case of codimension one foliations, we apply this definition to characterize transversely affine and transversely projective structures. For foliations of arbitrary codimension, we use jets of the Bott connection on the normal sheaf to define the prolongation of a transversely projective structure, and then apply it to produce singular transversely projective structures.
\end{abstract}

\maketitle

\section{Introduction}

\subsection*{Partial connections}

Let $\F$ be a foliation on a complex manifold $X$, with tangent sheaf $T_{\F}$. Given a coherent $\germe_X$-module $\mcE$, a $\F$-\emph{partial connection} on $\mcE$ is an $\germe_X$-linear morphism $\nabla: T_{\F} \rightarrow \mathrm{End}_{\C}(\mcE)$, assigning to each $v\in T_{\F}$ to a $\C$-linear morphism $\nabla_v$ satisfying the Leibniz rule (that is, an \emph{covariant differential operator}). In this sense, a partial connection provides a structure of derivation of sections of $\mcE$ along directions tangent to the leaves of the foliation. If, in addition, $\nabla$ is also a morphism of Lie algebroids, we say that $\nabla$ is flat.

Flat partial connections were first employed in holomorphic foliation theory by P. Baum and R. Bott (see \cite{baum-bott-70-zbMATH03308090,baum-bott-72-zbMATH03423310,bott-67-zbMATH03286795,bott-67-zbMATH03235065}), where the authors used the \emph{Bott connection} to develop a \emph{residue theory} for the singular set of a foliation (see \cite[Theorem 2]{baum-bott-72-zbMATH03423310}). Other important works with a similar goal are
\cite{abate-bracci-filippo-suwa-tovena-13-zbMATH06176462,seade-suwa-96-zbMATH00892565,suwa-98-zbMATH01194470}. Flat partial connections can also be used to describe \emph{transversal structures} to a foliation. Works adopting this perspective are \cite{biswas-02-zbMATH01786109, cousin-pereira-14-zbMATH06399599,loray-pereira-touzet-16-zbMATH06670708}.


In this work, we adopt the second point of view and aim to apply the theory of flat partial connections to the study of transverse structures. Our strategy is based on works that use the theory of jet bundles, frame bundles, and differential equations to investigate special structures on varieties (see \cite{Deligne-70-zbMATH03385791,gunning-67-zbMATH03233043,kobayashi-nagano-64-zbMATH03191725,loray-marin-09-zbMATH05718008}). 

More precisely, our first main contribution is a suitable definition, in the case of smooth foliations, of \emph{jets of flat partial connections} on locally free sheaves (see Subsection \ref{Subsection: transverse jets}). Namely, starting with a flat partial connection $\nabla$ on a locally free sheaf $\mcE$, we define the \emph{ $k$-th sheaf of transverse jets of $(\mcE,\nabla)$}, denoted by $\jetXF^k(\nabla)$, endowed with a flat partial connection $\nabla^k$, such that the $k$-th jet of a flat section of $\nabla$ is a flat section of $\nabla^k$.

We point out that this is not the first work to define jets of flat partial connection (see \cite{biswas-02-zbMATH01786109}), especially since the definition is quite natural. Nevertheless, we believe the approach we developed is more appropriate for the applications we have in mind. Additionally, in future work, we intend to generalize the definition of jets of flat partial connections to broader contexts.

\subsection*{Transverse structures for codimension one foliations} 

In \cite{gunning-67-zbMATH03233043}, R. Gunning provided a description of affine and projective structures on Riemann surfaces in terms of special differential operators. In the affine case, this description can be further translated in terms of a connection on the tangent bundle (see \cite[Lemma 1]{gunning-67-zbMATH03233043}). Moreover, using the work of M. Atiyah (see \cite[Theorem 5]{atiyah-57-zbMATH03128044}), one concludes that affine structures are naturally in bijection with splittings of the short exact sequence of the sheaf of first jets of sections of $T_X$.

In the case of projective structure, P. Deligne, still building on the work of Gunning, provided a description in terms of connections on first jets and second-order differential equations (see \cite[Proposition 5.12]{Deligne-70-zbMATH03385791}).

For codimension one smooth foliations, we study transversely affine and transversely projective structures. In this setting, we generalized the results mentioned above by giving a characterization of transversely affine structures (see Corollary \ref{C: transversely affine structures}) and transversely projective structures (see Theorem \ref{T: second order transverse differential equations}).

\subsection*{Singular transversely projective structures}

Given a codimension $q$ foliation $\F$, the definition of a transversely projective structure for $\F$ as a foliated atlas whose change of coordinates are automorphisms of $\Pj^q$ is only appropriate when $\F$ is smooth. In order to study transverse structure for singular foliations, we consider instead \emph{singular transversely projective structures}. In the holomorphic foliation literature, such structures can be defined in terms of collections of meromorphic $\psl(q+1,\C)$-forms with some compatibility equation, in terms of global meromorphic $\psl(q+1,\C)$-forms, or even as a $\Pj^q$-bundle equipped with a generically transversal foliation (for a discussion of the different definitions, see \cite{cerveau-lins-neto-loray-pereira-touzet-07-zbMATH05202834, loray-pereira-touzet-16-zbMATH06670708}).

In this context, our contribution is a construction we call the \emph{prolongation of a transversely projective structure} (see Subsection \ref{Subsection: Prolongation of transversely projective structures}). Namely, starting with a smooth foliation and a transversely projective structure, we construct a singular transversely $\PSL(q+1,\C)$-structure for the foliation induced by the flat partial connection $\nabla_B^1$ on the total space of $\jetXF^1(\nabla_B)$, where $\nabla_B$ stands for the Bott connection on $N_{\F}$. The main consequence of this construction is Theorem \ref{T: prolongation of transversely projective structure}, which shows how the prolongation can be used to produce singular transversely projective structures. In some sense, the prolongation itself may also be regarded as a singular transversely projective structure. 

\subsection*{Acknowledgements}

I am deeply grateful to Jorge Vitório Pereira for his guidance during my PhD, throughout which I studied the topics developed in this work. I am also grateful to João Pedro dos Santos for insightful discussions, references, and suggestions regarding notation. I thank Caio Melo for pointing out some typos in a preliminary version of this manuscript. Finally, I acknowledge the support of FAPERJ (Grant number E26/201.353/2023).

\subsection*{Structure of the paper} In Section \ref{Section: Foliations}, we recall the main definitions from foliation theory and introduce the definition of transverse structures we need in this work. In Section \ref{Section: Partial Connections}, we establish the basic theory of partial connections. Section \ref{Section: jets} begins with a review of the theory of jets of sections of sheaves, followed by the definition of jets of flat partial connections and a discussion of properties relevant to our applications. In Section \ref{Section: transverse differential equations}, we define transverse differential equations and apply the theory developed on the problem of flat extension of flat partial connections (see Theorem \ref{T: flat meromorphic connections, bijection}). We then provide characterizations on transversely affine structures (see Corollary \ref{C: transversely affine structures}) and transversely projective structures (see Theorem \ref{T: second order transverse differential equations}). Finally, in Section \ref{Section: prolongation}, we construct the prolongation of transversely projective structures (see Lemma \ref{L: psl structure of the prolongation}), and present a theorem relating the prolonged structure with singular transversely projective structures (Theorem \ref{T: prolongation of transversely projective structure}).

\section{Foliations}\label{Section: Foliations}

\subsection{Foliations}  A \emph{foliation} $\F$ on a complex manifold $X$ is determined by a saturated and involutive coherent subsheaf $T_{\F} \subset T_X$, called the \emph{tangent sheaf} of $\F$.  The \emph{dimension} of $\F$ is the rank of $T_{\F}$. The \emph{cotangent sheaf} of $\F$ is defined by $\CTF^1 := T_{\F}^*$.

The inclusion of $T_{\F}$ into $T_X$ induces the short exact sequence
\begin{equation}\label{Eq: short exact sequence of the tangent sheaf}
    0 \rightarrow T_{\F} \rightarrow T_X \rightarrow \frac{T_X}{T_{\F}} \rightarrow 0,
\end{equation}
which we will refer to as the \emph{exact sequence of the tangent sheaf}. The morphism $\Omega_X^1 \rightarrow \CTF^1$ defined as the dual of the inclusion $T_{\F} \rightarrow T_X$ is called the \emph{restriction morphism}. We define the \emph{conormal sheaf} of $\F$, denoted by $\CNF$, as the kernel of the restriction morphism, that is, 
\[
\CNF := \{\omega \in \Omega_X^1 ; \omega(v)=0,\forall v\in T_{\F} \},
\]
and by definition it is isomorphic to $(T_X/T_{\F})^*$. Additionally, the definition of $\CNF$ leads to the exact sequence
\begin{equation}\label{Eq: short exact sequence of the conormal sheaf}
    0 \rightarrow \CNF \rightarrow \Omega_X^1 \rightarrow \CTF^1,
\end{equation}
which we refer to as the \emph{exact sequence of the conormal sheaf}. The \emph{normal sheaf} of $\F$ is defined to be $N_{\F} := (\CNF)^*$. Finally, the \emph{codimension} of $\F$ is the rank of $\CNF$.

\subsection{Singular and smooth loci}
The \emph{singular locus} of $\F$ is the set of points $p\in X$ where the quotient $T_X/T_{\F}$ is not locally free, and it is denoted by $\sing(\F)$. The singular locus is always a closed subvariety of $X$, and since $T_X/T_{\F}$ is torsion-free, it follows that $\sing(\F)$ has codimension at least two.  

The \emph{smooth locus} of $\F$ is the complement $X-\sing(\F)$, that is, the set of \emph{smooth} points of the foliation $\F$. Let $q = \codim(\F)$. By \emph{Frobenius Theorem}, for every $x\in X$ smooth point of the foliation, there exists a submersion $\phi: U \rightarrow \C^{q}$ defined in a neighborhood of $x$ such that 
\[
    \restr{T_{\F}}{U} = \ker(d\phi: T_U \rightarrow \phi^*T_{\C^{q}}),
\]
that is, $\restr{T_{\F}}{U}$ is the relative tangent bundle of the submersion $\phi$. We say that $\phi$ is a \emph{foliated chart} for $\F$. Concretely, this is the same as saying that there exists a system of coordinates $(x_1,\ldots,x_q,x_{q+1},\ldots x_n)$ on a neighborhood of $x\in X$ such that $T_{\F,x}$ is the free $\germe_{X,x}$-module generated by the vectors $\{\de/ \de x_{n_{q+1}}, \ldots \de / \de x_{n} \}$. We refer to a system of coordinates as above as a \emph{foliated system of coordinates}. 

We say that $\F$ is smooth if $\sing(\F)=\emptyset$. In this case, the set of foliated charts $\mc C = \{\phi: U \rightarrow \C^{q}  \}$ of the foliation $\F$ defines a \emph{foliated atlas} for $\F$.

\subsection{Transversely homogeneous structures} Let $\F$ be a smooth codimension $q$ foliation on $X$. Let $G$ be a complex Lie group, and let $H\subset G$ be a closed subgroup, such that $\dim G/H =q$. A \emph{transversely $G/H$-structure} for $\F$ is a collection of submersions $\mc C = \{ \phi_i: U_i \rightarrow G/H \}$ such that $\mc U = \{U_i\}$ is an open covering of $X$, $\phi_i$ determines $\F$ on $U_i$, and for each pair $(i,j)$ with $U_i\cap U_j \neq \emptyset$, there exists $g_{ij} \in G$ such that $\phi_i = L_{g_{ij}} \circ \phi_j$ on $U_i\cap U_j$. In the particular case where $H=\{e\}$, the transverse structure is called a \emph{transversely Lie structure}. We refer to \cite[Chapter III]{Godbillon-91-zbMATH00050350} for a detailed discussion about transverse structures.

For codimension one foliations, we have essentially three possible transversely homogeneous structures (see \cite[Lemma 1.8]{cerveau-lins-neto-loray-pereira-touzet-06-zbMATH05033717}): transversely \emph{euclidean} structures ($G=\C$ the group of translations on the complex line, and $H=\{e\}$ trivial), transversely \emph{affine} structures ($G=\Aff(\C)$ the group of affine transformations, and $H=\C^*$ the subgroup of transformations fixing the origin) and transversely \emph{projective} structures ($G=\PSL(2,\C)$ the automorphisms of the projective line, and $H=G_p$ the isotropy subgroup of some point $p\in \Pj^1$).

\subsection{Singular transversely homogeneous structures}

Observe that a transversely homogeneous structure for a foliation $\F$ induces a foliated atlas, and thus this definition applies only for smooth foliations. In order to study transverse structures for singular foliations, we need to consider more general notions of structures, such as the singular homogeneous structures introduced below.

Let $G$ be a complex Lie group, and let $H\subset G$ be a closed subgroup, such that $\dim G/H =q$.  Let $\g, \h$ be respective Lie algebras. A \emph{singular transversely $G/H$-structure} for $\F$ is a collection of $\g$-valued 1-forms $\mc C = \{\Omega_i: T_{U_i} \rightarrow \g \otimes \germe_{U_i}(D)\}$ such that

\begin{enumerate}[label = (\roman*)]
    \item $\mc U = \{ U_i\}$ is a covering of $X$ and $D\ge 0$ is a divisor on $X$;
    \item\label{Item: g-valued 1-forms that are flat and induces the foliation} for every $i$, the $\g$-valued 1-form $\Omega_i$ is \emph{flat}, that is, 
    \[
        d\Omega_i + 1/2[\Omega_i,\Omega_i]=0,
    \]
    and the kernel of the induced morphism $T_{U_i} \rightarrow \g/\h \otimes \germe_{U_i}(D)$ is $\restr{T_{\F}}{U_i}$; and
    \item\label{Item: compatibility of local g structures} for every pair $(i,j)$ with $U_i\cap U_j$, there exists $g_{ij}:U_i\cap U_j \rightarrow \C^*$ such that 
    \begin{equation*}\label{Eq: compatibility of local affine structures}
        \Omega_i = \Ad(g_{ij}^{-1}) \circ \Omega_j + g_{ij}^*(\Omega_{H}),
    \end{equation*}
    where $\Ad: G \rightarrow \Aut(\g)$ is the adjoint representation of $G$, and $\Omega_{H}$ is the Maurer-Cartan form of the group $H$ (see \cite[Chapter 3, Definition 1.3]{sharpe-97-zbMATH00914851}). We will denote the \emph{compatibility equation} above simply by $\Omega_i \Rightarrow_{g_{ij}} \Omega_j$. 
\end{enumerate}
We refer to \cite[Chapter 3]{sharpe-97-zbMATH00914851} for the basics properties of the Maurer-Cartan form of Lie groups.  As before, in the particular case where $H = \{e \}$, the transverse structure is called a \emph{singular transversely Lie structure}.

\begin{rmk}
    A singular transversely $G/H$-structure is the \emph{singular and transverse} counter-parts of the definition of a \emph{flat Cartan atlas} for a complex manifold (see \cite[Chapter 5, Definitions 1.3 and 1.10]{sharpe-97-zbMATH00914851}, and most part of the theory of flat Cartan connections can be translated to this context. A great exposition to Cartan connections can be found in \cite{sharpe-97-zbMATH00914851}.
\end{rmk}

In this work, we only deal with the example when $G=\PSL(n+1,\C)$ is the group of automorphisms of $\Pj^n$, and $H$ is either trivial (in this case, we have \emph{transversely $\PSL(n+1,\C)$-structures}) or $H=G_p$ is the isotropy subgroup of some point $p\in \Pj^q$ (in this case, we have \emph{transversely projective structures}). For an explicit description of the Lie algebra $\psl(n+1,\C)$, see \cite[Example IV.4.1]{kobayashi-95-zbMATH00758273}.

\subsection{Primitives of singular transversely homogeneous structures}

Fixing a $\g$-valued form $\Omega$ on an open subset $U\subset X$, we say a map $\Phi:U \rightarrow G$ is a \emph{primitive} of $\Omega$ if $\Omega = \Phi^*\Omega_G$. The condition for the existence of local primitives of $\g$-valued 1-forms is exactly the flatness of $\Omega$, as explained in \cite[Chapter 3, Sections 5 and 6]{sharpe-97-zbMATH00914851}. Moreover, given two primitives $\Phi_1,\Phi_2$ for $\Omega$, there exists $g\in G$ such that $\Phi_1 = L_g\circ \Phi_2$ (see \cite[Theorem 5.2]{sharpe-97-zbMATH00914851}). 

Given a transversely homogeneous structure $\mc C$, we say that $\Phi: U\rightarrow G$ is a primitive of $\mc C$ if it is the primitive of some of its $\g$-valued 1-forms $\Omega \in \mc C$. For every primitive $\Phi: U \rightarrow G$, we consider the induced map $\phi: U \rightarrow G/H$ given by the composition of $\Phi$ with the projection $G\rightarrow G/H$. Using the commutative diagram of the tangent bundle of a Klein Geometry (see \cite[Chapter 4, Section 5]{sharpe-97-zbMATH00914851}), it is easy to see that $\phi$ defines $\F$ on $U$.

Let now $\F$ be smooth, and suppose that it admits a (smooth) transversely homogeneous structure $\mc C = \{\phi_i: U_i\rightarrow G/H \}$ and a singular transversely homogeneous structure $\mc C' = \{\Omega_i: T_{U_i} \rightarrow \g \otimes \germe_{U_i}(D)\}$. We say that $\mc C$ and $\mc C'$ are equivalent if, for any primitive $\Phi: U \rightarrow G$ of $\mc C'$, the induced map $\phi: U \rightarrow G/H$ belongs to $\mc C$.

\section{Partial connections}\label{Section: Partial Connections}

\subsection{Definition}\label{Subsection: definition} Let $\mcE$ be a coherent $\germe_X$-module. Let us denote by $\CTF^1(\mcE) := \mcHom_{\germe_X}(T_{\F},\mcE)$ the sheaf of foliated differential 1-forms with coefficients in $\mcE$. Remark that for $\mcE = \germe_X$, the sheaf $\CTF^1(\germe_X)$ is simply the cotangent sheaf $\CTF^1$. A $\F$-\emph{partial connection} (or simply a \emph{partial connection}, when the foliation $\F$ is clear in the context) on $\mcE$ is a $\C$-morphism
\begin{equation*}
    \begin{split}
        \nabla: \mcE & \rightarrow \CTF^1(\mcE) \\
        s & \mapsto (v \mapsto \nabla_v(s))
    \end{split}
\end{equation*}
satisfying the Leibniz rule:
\begin{equation*}
    \nabla_v(f \cdot s) = v(f) \cdot s + f \cdot \nabla_v(s), \forall f \in \germe_X, s \in \mcE, v \in T_{\F}.
\end{equation*}
Observe that if $\F$ is the foliation by one leaf, that is, $T_{\F} =T_X$, a $\F$-partial connection is the same as a \emph{connection}. Most of the concepts presented in the following sections are straightforward generalizations of the corresponding concepts for connections (we refer to \cite{atiyah-57-zbMATH03128044,Deligne-70-zbMATH03385791,Katz-70-zbMATH03350023} as classical references to the theory of connections).

\begin{rmk}\label{Rmk: definition of partial connection}
    It is common to find in the literature a definition of a partial connection as a $\C$-morphism whose target is $\CTF^1 \otimes_{\germe_X} \mcE$ instead of $\CTF^1(\mcE)$ (see \cite[Definition 2.1]{baum-bott-72-zbMATH03423310}), and in general these definitions are not equivalent: although a partial connection with target $\CTF^1 \otimes_{\germe_X} \mcE$ induces a partial connection with target $\CTF^1(\mcE)$ by composition with the natural morphism $\CTF^1 \otimes_{\germe_X} \mcE \rightarrow \CTF^1(\mcE)$, there exist examples of partial connections with target is $\CTF^1(\mcE)$ that can not be described using $\CTF^1 \otimes_{\germe_X} \mcE$ (see Example \ref{Ex: partial connection on the singular ideal} below). Nevertheless, when the foliation is smooth, $\CTF^1$ is locally free and thus the natural morphism $\CTF^1\otimes_{\germe_X} \mcE \rightarrow \CTF^1(\mcE)$ is an isomorphism; therefore, in this case, both definitions coincide.
\end{rmk}

\begin{ex}(Derivation) The structural sheaf $\germe_X$ always admits the natural partial connection given by the \emph{derivation along the leaves of $\F$}, that is,
\begin{equation*}
    \begin{split}
        \df: \germe_X & \rightarrow \CTF^1 \\
        f & \mapsto (v \mapsto v(f))
    \end{split}
\end{equation*}
We say that a function $f\in \germe_X$ is a \emph{first integral of $\F$} if $\df(f)=0$, that is, for every $v\in T_{\F}$, $v(f)=0$. In the analytic category, the set of first integrals of $\F$ forms a sheaf of rings, which we will denote by $\germe_{X/\F}$. On the smooth locus of a foliation, it is simply to describe $\germe_{X/\F}$ locally. Let $x\in X$ be a smooth point of the foliation, and let $(x_1,\ldots,x_n)$ be a foliated system of coordinates defined on a neighborhood of $x$, such that $T_{\F}$ is generated by $\{\de / \de x_{q+1},\ldots, \de / \de x_n\}$. Then,
\[
\df(f) = 0 \iff \frac{\de f}{\de x_i} =0, q+1\le i \le n \iff f(x_1,\ldots,x_n) = f(x_1,\ldots,x_q), 
\]
and therefore $\germe_{X/\F,x} = \C\{x_1,\ldots,x_q \} \subset \germe_{X,x}$.
\end{ex}

\begin{ex}($\F$-invariant subvarieties) We say that a subvariety $Y\subset X$ is \emph{$\F$-invariant} if the ideal sheaf $I_Y$ is invariant by derivations on the tangent sheaf of $\F$, that is, for every $f \in I_Y$ and every $v\in T_{\F}$, we have $v(f) \in I_Y$. In terms of partial connections, this is the same as saying that the derivation along the leaves of $\F$, $\df: \germe_X \rightarrow \CTF^1$, induces a partial connection on the ideal sheaf $I_Y$. That is, $Y$ is $\F$-invariant if and only if we have the partial connection
\begin{equation*}
    \begin{split}
        \df: I_Y & \rightarrow \CTF^1(I_Y) \\
        f & \mapsto (v \mapsto v(f)).
    \end{split}
\end{equation*}
In particular, since generally $I_Y$ is not a locally free $\germe_X$-module, this provides an example of a partial connection on a coherent sheaf that is not locally free. This example contrasts with the well-known fact that a coherent sheaf with a connection must be locally free (see \cite[Proposition 8.8]{Katz-70-zbMATH03350023}).
\end{ex}

\begin{ex}\label{Ex: partial connection on the singular ideal}
    Let $\F$ be the foliation determined by the level sets of $f(x,y,z) = x^2+y^2+z^2$ on the complex space $X = \C^3$. It is easy to calculate that the ideal $\ideal{x,y,z}$ is $\F$-invariant, and thus it induces a partial connection
    \[
    \df: \ideal{x,y,z} \rightarrow \CTF^1(\ideal{x,y,z})
    \]
    This connection does not arise from any partial connection of the type $\nabla: \ideal{x,y,z} \rightarrow \CTF^1 \otimes \ideal{x,y,z}$. Indeed, in this case $\CTF^1 = \Omega_X^1/\germe_X \cdot (xdx + ydy + zdz)$ and thus
    \[
    \eta(v) \in \ideal{x,y,z}, \forall \eta \in \CTF^1, v\in T_{\F}. 
    \]
    Hence, for every section $s \in \CTF^1 \otimes \ideal{x,y,z}$, we have $i_v(s) \in \ideal{x,y,z}^2$. However, for $v=x\de/\de y - y \de/\de x \in T_{\F}$, we have $v(x) = -y \notin \ideal{x,y,z}^2$.
\end{ex}

\begin{ex}\label{Ex: bott connection}(Bott Connection) 
For every $v\in T_{\F}$, since $T_{\F}$ is involutive, the Lie derivative $L_v: T_X \rightarrow T_X$ leaves the subsheaf $T_{\F}$ invariant. Thus, it induces a morphism on the quotient, $L_v: T_X/T_{\F} \rightarrow T_X/T_{\F}$. Let $\pi: T_X \rightarrow T_X/T_{\F}$ the natural quotient. We define the \emph{Bott connection} on $T_X/T_{\F}$ by the $\C$-morphism
\begin{equation*}
    \begin{split}
        \nabla_B: \frac{T_X}{T_{\F}} & \rightarrow \CTF^1\left(\frac{T_X}{T_{\F}} \right) \\
        \pi(w) & \mapsto \left(v \mapsto \pi ([v,w]) \right).
    \end{split}
\end{equation*}
The Bott connection was first defined in \cite{bott-67-zbMATH03286795} for holomorphic vector fields, and later generalized for more general foliations in \cite{baum-bott-72-zbMATH03423310,baum-bott-70-zbMATH03308090}. Throughout this work, the Bott connection will be used in several opportunities to study the existence of transverse structures for a foliation.
\end{ex}

\subsection{Flat partial connections} Let $(\mcE,\nabla)$ be a partial connection on a coherent sheaf $\mc E$. We say that $\nabla$ is \emph{flat} if
\[
\nabla_{[v,w]} = \nabla_v \circ \nabla_w - \nabla_w \circ \nabla_v, \forall v,w\in T_{\F}.
\]
It is easy to verify that all examples of partial connections presented in Section \ref{Subsection: definition} are flat. We say that a section $s\in \mcE$ is \emph{flat} if $\nabla(s)=0$. 

\begin{prop}\label{P: flat connection and existence of flat basis}
    Let $\F$ be a smooth foliation on a complex manifold $X$, and let $(\mcE, \nabla)$ be a partial connection on a rank $r$ locally free sheaf. Then, $\nabla$ is flat if, and only if, for every $x\in X$, there exists a neighborhood $U\subset X$ of $x$ such that $\restr{\mc E}{U}$ is free and admits a basis of flat sections.
\end{prop}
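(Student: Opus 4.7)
The backward implication is immediate. If $e_1,\ldots,e_r$ is a local flat frame and $s = \sum_j f_j e_j$, then $\nabla_v s = \sum_j v(f_j) e_j$, and a direct expansion using $[v,w](f) = v(w(f)) - w(v(f))$ shows that $(\nabla_v\nabla_w - \nabla_w\nabla_v)s = \nabla_{[v,w]} s$, establishing flatness.

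For the forward implication, my plan is to reduce to a system of compatible linear ODEs with holomorphic parameters. Fix $x\in X$ and choose foliated coordinates $(x_1,\ldots,x_n)$ on a polydisc $U\ni x$ in which $T_{\F}|_U$ is freely generated by $\partial/\partial x_{q+1},\ldots,\partial/\partial x_n$; write $\partial_i$ for $\partial/\partial x_i$. Shrinking $U$, trivialize $\mcE|_U \cong \germe_U^{\oplus r}$ with standard frame $e_1^0,\ldots,e_r^0$, and define matrices $A_i \in \mathrm{End}(\C^r)\otimes \germe_U$ for $q+1 \le i \le n$ by $\nabla_{\partial_i} e_j^0 = \sum_k (A_i)_{kj} e_k^0$. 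Since $[\partial_i,\partial_j]=0$, flatness is equivalent to the Maurer--Cartan type identities
\[
\partial_i A_j - \partial_j A_i + [A_i, A_j] = 0, \quad q+1 \le i,j \le n,
\]
and the existence of a local flat basis amounts to producing $P \in \GL(r,\germe_U)$ with $\partial_i P + A_i P = 0$ for every such $i$.

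Such a $P$ I would construct by integrating one tangential variable at a time. Set $\Sigma_k := \{x_{k+1} = \cdots = x_n = 0\} \cap U$ for $q \le k \le n$, so $\Sigma_q$ is a transversal and $\Sigma_n = U$. Starting with $P|_{\Sigma_q} = \Id$, pass from $\Sigma_{k-1}$ to $\Sigma_k$ by solving the linear holomorphic ODE $\partial_k P = -A_k P$ in the variable $x_k$, treating $x_1,\ldots,x_{k-1}$ as holomorphic parameters; standard linear ODE theory yields a unique invertible $P$ on all of $U$.

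The main obstacle, and the step in which flatness is essential, is verifying that this $P$ satisfies all the equations $Q_i := \partial_i P + A_i P = 0$ simultaneously, rather than only the last one imposed at each integration step. I would proceed by downward induction on $i$ from $n-1$ to $q+1$: assuming $Q_j = 0$ on $U$ for every $j > i$, a direct computation using $[\partial_i,\partial_{i+1}]=0$, the relation $\partial_{i+1} P = -A_{i+1} P$, and the flatness identity for the pair $(i,i+1)$ yields
\[
\partial_{i+1} Q_i = -A_{i+1} Q_i.
\]
Since $Q_i|_{\Sigma_i} = 0$ by construction, uniqueness for homogeneous linear ODEs propagates this to $Q_i = 0$ on $\Sigma_{i+1}$; analogous identities $\partial_k Q_i = -A_k Q_i$ for $k > i+1$, derived in the same way from the already-established $Q_k = 0$, then extend the vanishing successively to $\Sigma_{i+2},\ldots,\Sigma_n = U$. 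The columns of $P$ then constitute the desired local flat basis.
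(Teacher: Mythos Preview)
Your proof is correct and follows essentially the same route as the paper: both directions are argued identically, and for the forward implication both reduce, in foliated coordinates, to solving an integrable linear Pfaffian system in the leafwise variables with the transverse variables as holomorphic parameters. The only difference is that the paper packages the existence and uniqueness of solutions as a separate lemma and appeals to a reference (an integrable linear Pfaffian system result over $\C\{x_1,\ldots,x_q\}$), whereas you unpack that black box by the standard inductive ``integrate one variable at a time and use the zero-curvature identity to propagate the remaining equations'' argument; this makes your version self-contained but is not a genuinely different strategy.
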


In the following paragraphs, in order to prove Proposition \ref{P: flat connection and existence of flat basis}, we explain how to interpret partial connections on locally free sheaves as systems of differential equations.

Let $(\mc E,\nabla)$ be a partial connection on a rank $r$ locally free sheaf. Remark that, in this situation, the natural morphism $\CTF^1 \otimes \mcE \rightarrow \CTF^1(\mcE)$ is an isomorphism, and we could consider partial connections as $\C$-morphisms whose target is $\CTF^1 \otimes \mcE$ instead of $\CTF^1(\mcE)$ (see Remark \ref{Rmk: definition of partial connection}). Let $U\subset X$ be an open subset where $\restr{\mcE}{U}$ is free, and let us choose a basis of sections $\{e_1,\ldots, e_r\}$. There exists a collection of foliated 1-forms $\{\omega_{ij} \in \restr{\CTF^1}{U}\}$ such that $\nabla(e_i) = \sum_{j=1}^r \omega_{ji} \otimes e_j$, and using the Leibniz rule we calculate that
\[
\nabla\left(\sum_{i=1}^r f_i \cdot e_i \right) = \sum_{i=1}^r \df(f_i) \otimes e_i + \sum_{i,j=1}^r f_i \cdot \omega_{ji} \otimes e_j, \forall f_1,\ldots,f_r\in \germe_U. 
\]

Let $d$ be the dimension of $\F$, and $q$ be the codimension. Shrinking $U$ if necessary, let $(x_1,\ldots,x_q,y_1,\ldots,y_d)$ be a foliated system of coordinates, such that $T_{\F}$ is generated by $\{\de / \de y_1,\ldots, \de /\de y_d \}$. Let us abuse notation, and denote by $\{dy_1,\ldots, dy_d\}$ the dual basis of $\CTF^1$ on $U$. For ever $1\le i,j \le r$, we write $\omega_{ij} = \sum_{k=1}^d A_{jik}\cdot  dy_k$ with respect to this basis.  Let $A_k = (A_{ijk})_{1\le i,j \le r}$ be a $r\times r$ matrix of functions.

Let us use the base of $\CTF^1$ given above to study the flat sections of $\nabla$. Writing down the expression for $\nabla$ in this basis, we verify that a section $s=\sum_{i=1}^r f_i \cdot e_i$ is flat if, and only if, the collection of functions $(f_1,\ldots,f_r)$ satisfies the system of linear differential equations

\begin{equation*}
    \frac{\de }{\de y_i}\left(\begin{array}{c} f_1 \\ \vdots \\ f_r \end{array} \right) = A_k \cdot\left(\begin{array}{c} f_1 \\ \vdots \\ f_r \end{array} \right),  1\le k \le d
\end{equation*}
Moreover, one can verify that the connection $\nabla$ is flat if, and only if, 
\begin{equation*}
    \frac{\de A_i}{\de y_j} - \frac{\de A_j}{\de y_i} = A_j \cdot A_i - A_i \cdot A_j, 1\le i,j\le d
\end{equation*}

\begin{lemma}\label{L: integrable linear pfaff system}
    With the notation above, suppose that $\nabla$ is flat. Then, for every $r$-uple of function $g_i(x_1,\ldots,x_q), 1\le i \le r$, the system of differential equations
    \begin{equation}\label{Eq: PVI}
        \begin{cases}
            \frac{\de }{\de y_i}\left(\begin{array}{c} f_1 \\ \vdots \\ f_r \end{array} \right) = A_k \cdot\left(\begin{array}{c} f_1 \\ \vdots \\ f_r \end{array} \right), \text{ for } 1\le k \le d, \\
            f_i(x_1,\ldots,x_q,0,\ldots,0) = g_i(x_1,\ldots,x_q), \text{ for } 1\le i \le r,
        \end{cases}
    \end{equation}
    admits exactly one holomorphic solution.
\end{lemma}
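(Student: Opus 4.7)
My plan is to prove the lemma by induction on the number $d$ of flow directions, reducing at each step to the classical holomorphic Cauchy theorem for ODEs with parameters. The base case $d=1$ is exactly that theorem applied to a single linear system $\partial \vec f / \partial y_1 = A_1 \vec f$ depending holomorphically on parameters $(x_1,\ldots,x_q)$.

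For the inductive step, assume the statement holds for $d-1$. First, restrict the coefficients $A_k$ to $\{y_1 = 0\}$ to obtain a $(d-1)$-direction system in $(y_2,\ldots,y_d)$ parametrized by $(x_1,\ldots,x_q)$; the compatibility equations listed just before the lemma restrict to the analogous compatibility equations for this reduced system, so the inductive hypothesis yields a unique holomorphic $\vec h(x_1,\ldots,x_q,y_2,\ldots,y_d)$ satisfying the reduced system with $\vec h(x,0,\ldots,0) = \vec g(x)$. Next, I would solve the single-direction linear system
\[
\frac{\partial \vec f}{\partial y_1} = A_1(x,y_1,\ldots,y_d)\,\vec f, \qquad \vec f(x,0,y_2,\ldots,y_d) = \vec h(x,y_2,\ldots,y_d),
\]
again by the classical holomorphic Cauchy theorem, to obtain a unique candidate $\vec f$.

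The main obstacle is to check that this $\vec f$ satisfies all the remaining equations $\partial \vec f/\partial y_k = A_k \vec f$ for $2\le k\le d$, and this is where the flatness hypothesis enters essentially. Setting $\vec E_k := \partial \vec f/\partial y_k - A_k \vec f$, the initial condition for $\vec f$ together with the equations satisfied by $\vec h$ give $\vec E_k|_{y_1=0} = 0$. A direct computation, using the equality of mixed partials $\partial_{y_1}\partial_{y_k}\vec f = \partial_{y_k}\partial_{y_1}\vec f$, the relation $\partial_{y_1}\vec f = A_1 \vec f$, and the compatibility condition $\partial_{y_k}A_1 - \partial_{y_1}A_k = A_k A_1 - A_1 A_k$, collapses to the linear ODE $\partial_{y_1}\vec E_k = A_1 \vec E_k$. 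The classical uniqueness statement for holomorphic linear ODEs then forces $\vec E_k \equiv 0$, proving that $\vec f$ solves the full system. Uniqueness of the full solution $\vec f$ follows from the uniqueness applied at each of the two construction steps.
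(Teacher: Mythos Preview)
Your argument is correct: the induction on $d$, with the reduction to the classical holomorphic Cauchy theorem for linear ODEs in one step and the verification of the remaining equations via the error terms $\vec E_k$ satisfying $\partial_{y_1}\vec E_k = A_1 \vec E_k$ with vanishing initial data, is exactly the standard proof of existence and uniqueness for integrable linear Pfaffian systems. The computation you outline for $\partial_{y_1}\vec E_k$ does collapse as you claim once one uses the compatibility identity $\partial_{y_k}A_1 - \partial_{y_1}A_k = A_kA_1 - A_1A_k$.

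The paper's own proof does not really carry out any of this: it simply observes that the system is an integrable linear Pfaffian system in the variables $y_1,\ldots,y_d$ with coefficients in $\C\{x_1,\ldots,x_q\}$, invokes the well-known $q=0$ case, and asserts that the same argument (citing \cite[Theorem 11.1]{haraoka-2020-zbMATH07243235}) goes through verbatim with the extra parameters. Your write-up is therefore a self-contained version of what the paper defers to a reference; the underlying method is the same, and your version has the advantage of making the role of the flatness hypothesis explicit at the point where it is actually used.
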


\begin{proof} Let us consider the System (\ref{Eq: PVI}) as an integrable linear Pfaffian System for functions on the variables $(y_1,\ldots,y_d)$ over the ring $\C\{x_1,\ldots,x_q\}$. It is a well-known fact that, for $q=0$, this system always admits a unique holomorphic solution. Moreover, it is easy to verify that the same proof holds for this more general context of functions with coefficients over the ring $\C\{x_1,\ldots,x_q\}$ (see \cite[Theorem 11.1]{haraoka-2020-zbMATH07243235} for a proof that holds \emph{ipsis litteris} for our case).
\end{proof}

We use Lemma \ref{L: integrable linear pfaff system} to prove Proposition \ref{P: flat connection and existence of flat basis}.
\begin{proof}[Proof of Proposition \ref{P: flat connection and existence of flat basis}]
    First, observe that the existence of a flat basis on any open subset $U\subset X$ implies that the connection is flat. Indeed, if $\restr{\mc E}{U}$ is free and admits a flat basis $\{e_1,\ldots,e_r\}$, we have that
    \[
    \nabla_{[v,w]}(e_i) = \nabla_v \circ \nabla_w(e_i) - \nabla_w \circ \nabla_v(e_i),  1\le i \le r, \forall v,w\in T_{\F},
    \]
    because both sides of the equation are zero. Therefore, $\nabla$ is flat. 

    Conversely, let us suppose now that $\nabla$ is flat. For every point $x\in X$, we consider an open neighborhood $U\subset X$ where we can keep the notation of Lemma \ref{L: integrable linear pfaff system}.  Let $(f_1^l,\ldots,f_r^l)$ be the solution of the System (\ref{Eq: PVI}) for $g_1=0,\ldots,g_l=1,\ldots,g_r=0$, and let $s_l' = \sum_{i=1}^r f_i^l \cdot s_i$. By definition we must have $\nabla(s_l')=0$, and since the matrix $(f_i^l(x))_{1\le i,l\le r}$ is the identity, shrinking $U$ if necessary, $\{s_l'\}$ is still a basis for $\mcE$. Therefore, $\{s_l'\}$ forms a flat basis for $\mc E$ on a neighborhood of $x\in X$. This concludes the proof of the proposition. 
\end{proof}

\begin{cor}\label{C: correspondence}
    Let $\F$ be a smooth foliation on a complex manifold $X$. Then, for every $(\mcE,\nabla)$ flat partial connection on rank $r$ locally free $\germe_X$-module, the set of flat section $\ker \nabla$ is a rank $r$ locally free $\germe_{X/\F}$-module. Conversely, for every $\mb E$ rank $r$ locally free $\germe_{X/\F}$-module, the sheaf $\germe_{X} \otimes_{\germe_{X/\F}} \mb E$ is a rank $r$ locally free $\germe_X$-module, and it is endowed with a unique flat partial connection $\nabla$ such that $\ker \nabla = \mb E$.
\end{cor}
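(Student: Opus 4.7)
The plan is to derive both directions of the correspondence from the local existence of a flat basis provided by Proposition \ref{P: flat connection and existence of flat basis}. For the forward implication, I would fix $x \in X$ and shrink to an open neighborhood $U$ on which $\restr{\mcE}{U}$ admits a flat basis $\{s_1,\ldots,s_r\}$. Since $\F$ is smooth, $\CTF^1(\mcE) \cong \CTF^1 \otimes \mcE$ (see Remark \ref{Rmk: definition of partial connection}), and combining the Leibniz rule with $\nabla(s_i)=0$ yields
\[
\nabla\Bigl(\sum_{i=1}^r f_i s_i\Bigr) \;=\; \sum_{i=1}^r \df(f_i) \otimes s_i.
\]
Linear independence of $\{s_i\}$ over $\germe_U$ forces the right-hand side to vanish if and only if each $f_i$ is a first integral, so $\restr{\ker \nabla}{U} = \bigoplus_{i=1}^r \restr{\germe_{X/\F}}{U} \cdot s_i$ is free of rank $r$ over $\germe_{X/\F}$, as required.

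For the converse, I would set $\mcE := \germe_X \otimes_{\germe_{X/\F}} \mb E$; since $\mb E$ is locally $\germe_{X/\F}^r$, one obtains $\mcE$ locally isomorphic to $\germe_X^r$, hence locally free of rank $r$ over $\germe_X$. I would then define
\[
\nabla(f \otimes e) \;:=\; \df(f) \otimes e \qquad (f \in \germe_X,\ e \in \mb E),
\]
extended $\C$-linearly. The delicate point is well-definedness on the tensor product: for $g \in \germe_{X/\F}$ one needs $\nabla(fg \otimes e) = \nabla(f \otimes ge)$, which follows from $\df(fg) = g\,\df(f) + f\,\df(g) = g\,\df(f)$ using $\df(g) = 0$. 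The Leibniz rule is then immediate, and flatness follows from the identity $[v,w](f) = v(w(f)) - w(v(f))$ applied in the first tensor factor, since $\nabla$ acts trivially on $\mb E$.

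For the identification $\ker \nabla = \mb E$, I would pick locally an $\germe_{X/\F}$-basis $\{e_1,\ldots,e_r\}$ of $\mb E$; then $\{1 \otimes e_i\}$ is a flat $\germe_X$-basis of $\mcE$, and applying the forward direction to $(\mcE,\nabla)$ yields $\restr{\ker \nabla}{U} = \bigoplus_i \germe_{X/\F}\cdot (1 \otimes e_i) = 1 \otimes \mb E$. Uniqueness is then forced by the Leibniz rule: any partial connection $\widetilde\nabla$ annihilating $1 \otimes \mb E$ satisfies $\widetilde\nabla(f \otimes e) = \df(f) \otimes e + f \cdot \widetilde\nabla(1 \otimes e) = \df(f) \otimes e$. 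The only real obstacle I anticipate is the well-definedness check on the tensor product, where the role of first integrals is essential; beyond that the corollary is a direct unpacking of definitions combined with Proposition \ref{P: flat connection and existence of flat basis}.
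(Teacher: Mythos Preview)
Your proposal is correct and is exactly the natural unpacking that the paper leaves implicit: the corollary is stated without proof, immediately after Proposition~\ref{P: flat connection and existence of flat basis}, and your argument is precisely the direct deduction from that proposition together with the Leibniz rule. The only point worth a word of extra care is the identification $\ker\nabla = \mb E$ in the converse, which tacitly uses that the natural map $\mb E \to \germe_X \otimes_{\germe_{X/\F}} \mb E$, $e \mapsto 1\otimes e$, is injective; this is clear once you pick a local $\germe_{X/\F}$-basis of $\mb E$, since the images form an $\germe_X$-basis and are in particular $\germe_{X/\F}$-linearly independent.
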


\begin{ex}\label{Ex: partial connection induced by submersion} Let $\F$ be a foliation defined by a submersion $\phi:X\rightarrow Y$. Let $\mcE$ be a finite rank locally free sheaf of $\germe_Y$-modules. Since $\phi^{-1}\germe_Y \simeq \germe_{X/\F}$, it follows that $\phi^{-1}\mcE$ is endowed with a structure of locally free $\germe_{X/\F}$-modules. Then, the sheaf $\phi^*\mcE$ admits a flat partial connection $\nabla$ such that $\ker \nabla = \phi^{-1} \mcE$.
\end{ex}

\subsection{The foliation induced by a flat partial connection } Still in the context of flat partial connection on locally free sheaves, observe that $(\mcE, \nabla)$ induces a foliation on $Y=E(\mcE^*)$ (here and in all this work, $E(\mcE) := \Spec( \Sym^{\bullet}(\mcE^*))$ is the total space of the locally free sheaf $\mcE$). First, using that sections of $\mcE$ correspond to linear functions of $Y$, for every $v\in T_{\F}$, the differential operator $\nabla_v: \mcE \rightarrow \mcE$ induces a vector $\Tilde{v} \in \mathrm{Der}(\germe_Y) = T_Y$. Concretely, let $\{e_1,\ldots,e_r\}$ be a local basis for $\mcE$, and $(y_1,\ldots,y_r)$ the corresponding system of coordinates on $Y$. If $\nabla_v(e_i) = \sum f_{ij}(x) \cdot e_j$, then $\Tilde{v}$ can be written as
\[
\Tilde{v} = v + \sum_{i,j=1}^r f_{ij}(x) \cdot y_j \cdot \frac{\de }{\de y_i}.
\]
Additionally, since $\nabla$ is flat, it follows that $[\hat{v},\hat{w}] = \widehat{[v,w]}$. Therefore, a flat partial connection $\nabla$ on $\mcE$ induces a foliation $\pi^*T_{\F} \rightarrow T_Y$ such that we have the commutative diagram
\[
\begin{tikzpicture}
    \matrix(m)[matrix of math nodes, column sep = 2em, row sep = 2em]
    {
    & T_Y \\
    \pi^*T_{\F} & \pi^*T_X \\ 
    };
    \path[->]
    (m-2-1) edge (m-1-2) edge (m-2-2)
    (m-1-2) edge node[right]{$d\pi$} (m-2-2)
    ;
\end{tikzpicture},
\]
where $\pi^*T_{\F} \rightarrow \pi^*T_X$ is the pullback of the inclusion $T_{\F} \subset T_X$.

\begin{ex}\label{Ex: foliation + partial connection induced by submersion}
    Follow the notation of Example \ref{Ex: partial connection induced by submersion}, the flat partial connection $\nabla$ induces a foliation $\G$ on $E(\phi^*\mcE^*)$. Using local coordinates, it is easy to see that $\G$ is the foliation induced by the bundle morphism (which is also a submersion) $E(\phi^*\mcE^*) \rightarrow E(\mcE^*)$.
\end{ex}

\subsection{The category of partial connections} Let $(\mcE,\nabla)$ and $(\mcE',\nabla')$ be partial connections, and let $\phi: \mcE \rightarrow \mcE'$ be a $\germe_X$-linear morphism. We say that $\phi$ is \emph{horizontal} (with respect to $\nabla$ and $\nabla'$) if the diagram
\begin{equation*}
    \begin{tikzpicture}
        \matrix(m)[matrix of math nodes, column sep = 2em, row sep = 2em]
        {
        \mcE & \mcE' \\
        \CTF^1(\mcE) & \CTF^1(\mcE') \\
        };
        \path[->]
        (m-1-1) edge node[above]{$\phi$} (m-1-2) edge node[left]{$\nabla$} (m-2-1)
        (m-2-1) edge node[above]{$\widetilde{\phi}$} (m-2-2)
        (m-1-2) edge node[right]{$\nabla'$} (m-2-2)
        ;
    \end{tikzpicture}
\end{equation*}
commutes, where $\widetilde{\phi}: \CTF^1(\mcE) \rightarrow \CTF^1(\mcE')$ is the natural morphism induced by $\phi$.

We define the \emph{category of (flat) partial connections} as the category where the objects are flat partial connections on coherent sheaves, and the morphism between the objects are horizontal morphisms. 

\begin{prop}
    The category of (flat) partial connections on coherent sheaves is abelian.
\end{prop}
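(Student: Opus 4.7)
The plan is to use the forgetful functor $(\mcE,\nabla) \mapsto \mcE$ to the abelian category of coherent $\germe_X$-modules and show that every structure required to be abelian lifts through it. Concretely, I will check that the category has (a) a zero object, (b) biproducts, (c) all kernels and cokernels, and (d) that the canonical morphism from coimage to image is always an isomorphism.

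Parts (a) and (b) are straightforward: the zero sheaf with its (unique) trivial partial connection is a zero object, and given $(\mcE,\nabla)$ and $(\mcE',\nabla')$, the direct sum $\mcE \oplus \mcE'$ equipped with $(\nabla \oplus \nabla')_v(s,s') := (\nabla_v s, \nabla'_v s')$ is flat, and the canonical injections and projections are horizontal, so this yields a biproduct.

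For (c), fix a horizontal morphism $\phi:(\mcE,\nabla) \to (\mcE',\nabla')$. On the kernel subsheaf $\ker \phi \subset \mcE$ I put the restriction of $\nabla$: horizontality gives $\phi(\nabla_v s) = \nabla'_v \phi(s) = 0$ whenever $s \in \ker \phi$, so $\nabla_v s \in \ker \phi$ and the map $\ker\phi \to \CTF^1(\ker\phi)$ is well-defined; it inherits Leibniz and flatness from $\nabla$, and the universal property from the kernel in $\germe_X$-modules combined with horizontality of test morphisms. For the cokernel, on $\mcE'/\phi(\mcE)$ I set $\bar{\nabla}'_v[\bar{s}] := [\nabla'_v s]$; this is well-defined because $\phi(\mcE)$ is $\nabla'$-stable (again by horizontality: $\nabla'_v \phi(t) = \phi(\nabla_v t) \in \phi(\mcE)$), and the universal property is inherited symmetrically.

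The decisive step is (d). The image $\phi(\mcE) \subset \mcE'$ inherits a flat partial connection by restriction of $\nabla'$, exactly as in the kernel construction; the coimage $\mcE/\ker\phi$ inherits one by the cokernel construction applied to $\nabla$. The underlying sheaf isomorphism $\mcE/\ker\phi \xrightarrow{\sim} \phi(\mcE)$ is tautologically horizontal with respect to these two induced connections (both are uniquely determined by $\phi$ together with horizontality), so it is an isomorphism in the category of flat partial connections. Combined with (a)--(c), this gives all the axioms of an abelian category. The main obstacle is purely bookkeeping: one must verify in each construction that horizontality of $\phi$ is precisely the property required to lift the coherent-sheaf structure to a flat partial connection, and that flatness propagates through each of the four operations. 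Notably, these arguments make no use of local freeness, which is why the whole category of coherent sheaves with flat partial connections, and not only the locally free part, is abelian.
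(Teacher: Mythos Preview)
Your proposal is correct and follows exactly the ``straightforward diagram chasing'' that the paper alludes to without spelling out; you have simply made explicit the verifications (zero object, biproducts, kernels and cokernels via stability of $\ker\phi$ and $\phi(\mcE)$ under the respective connections, and horizontality of the canonical coimage-to-image isomorphism) that the paper leaves to the reader.
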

The proof of the above proposition is a straightforward diagram chasing. In the following paragraphs, let us describe two important constructions that also hold in the category of partial connections: the tensor product and the $\mcHom$ operator.

Let $(\mcE,\nabla)$ and $(\mcE',\nabla')$ be partial connections on coherent sheaves. We define the \emph{tensor product} $(\mcE,\nabla) \otimes (\mcE',\nabla')$ as the sheaf $\mcE \otimes_{\germe_X} \mcE'$ endowed with the partial connection $\nabla \otimes \nabla'$ defined as
\[
(\nabla\otimes \nabla')_v(s\otimes s') := \nabla_v(s) \otimes s' + s\otimes \nabla_v'(s'), \forall v\in T_{\F},s\in \mcE,s'\in \mcE'.
\]
Moreover, when both $\nabla,\nabla'$ are flat, one can directly verify that $\nabla \otimes \nabla'$ is also flat.

Let us now define the \emph{$\mcHom$ operator in the category of partial connections}. We define $\mcHom((\mcE,\nabla), (\mcE',\nabla'))$ as the sheaf $\mcHom_{\germe_X}(\mcE,\mcE')$ endowed with the partial connection $\nabla''$ defined by
\[
\nabla_v''(\phi)(s) = \nabla'_v(\phi(s)) + \phi(\nabla_v(s)), \forall v\in T_{\F}, \phi \in \mcHom_{\germe_X}(\mcE,\mcE'), s\in \mcE.
\]
As before, when both $\nabla$ and $\nabla'$ are flat, we verify that $\nabla''$ is also flat. In particular, when $(\mcE',\nabla')=(\germe_X,\df)$, we define $\mcHom((\mcE,\nabla), (\germe_X,\df)) = (\mcE^*,\nabla^*)$ as the \emph{dual} of the connection $(\mcE,\nabla)$. One can easily verify that when $\mcE$ is a reflexive sheaf, then there is a natural isomorphism between $(\mcE,\nabla)$ and $(\mcE^{**},\nabla^{**})$. In this sense, the partial connection $(\mcE,\nabla)$ is also reflexive in the category of partial connections.

\begin{ex} In Example \ref{Ex: bott connection}, we defined the Bott connection on the sheaf $T_X/T_{\F}$. Dualizing, it induces natural connections on $\CNF = (T_X/T_{\F})^*$ and $N_{\F} = (\CNF)^*$, which we will also call the \emph{Bott connection} and denote by $\nabla_B$. Moreover, the Bott connection on the conormal sheaf $\CNF$ is explicitly defined as
\begin{equation*}
    \begin{split}
        (\nabla_B)_v(\omega)(w) & = v(\omega(w)) + \omega((\nabla_B)_v(w)) = v(\omega(w)) + \omega([v,w]) = d\omega(v,w) \\
        & = \mc L_v(\omega)(w), \forall v\in T_{\F}, \omega \in \CNF, w\in T_X/T_{\F}.
    \end{split}
\end{equation*}
Therefore,  $(\nabla_B)_v(\omega) = \mc L_v(\omega)$.
\end{ex}

\subsection{Extensions of partial connections} In order to keep the notation we introduced in Subsection \ref{Subsection: definition}, for any coherent sheaf $\mcE$, we denote by $\Omega_X^1(\mcE): = \mcHom(T_X,\mcE)$ the sheaf of holomorphic 1-forms with coefficients on $\mcE$. Additionally, since in this text $X$ is always smooth, it follows that $T_X$ is locally free, and thus we have a natural isomorphism $\Omega_X^1(\mcE) \simeq \Omega_X^1 \otimes \mcE$. Hence, in this text, we will always consider a connection on a sheaf $\mcE$ as a $\C$-linear morphism $\nabla: \mcE \rightarrow \Omega_X^1(\mcE)$ satisfying Leibniz rule.

Let $\nabla: \mcE \rightarrow \Omega_X^1(\mcE)$ be a connection. We define the \emph{restriction of $\nabla$} as the partial connection $\nabla_0$ defined by the following commutative diagram:
\[
    \begin{tikzpicture}
        \matrix(m)[matrix of math nodes, column sep = 2em, row sep = 2em]
        {
        \mcE & \Omega_X^1(\mcE) \\
        & \CTF^1(\mcE) \\
        };
        \path[->]
        (m-1-1) edge node[above]{$\nabla$}(m-1-2) edge node[shift={(-0.2,-0.2)}]{$\nabla_0$}(m-2-2)
        (m-1-2) edge node[right]{$\mrrestr$} (m-2-2)
        ;
    \end{tikzpicture},
\]
where $\mrrestr: \Omega_X^1(\mcE) \rightarrow \CTF^1(\mcE)$ is the restriction of 1-forms induced by the inclusion $T_{\F} \subset T_X$. Conversely, we say that $\nabla$ is the \emph{extension of} $\nabla_0$.

\begin{prop}
    Let $\F$ be a smooth codimension one foliation on $X$. Then, $\F$ is a transversely affine foliation if, and only if, the Bott connection on the conormal sheaf admits a flat extension.
\end{prop}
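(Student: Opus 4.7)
The plan is to analyze both directions by working locally with nowhere-vanishing generators of the line bundle $\CNF$, viewed as $1$-forms that define $\F$.

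For the \emph{forward} direction, I would start from a transversely affine atlas $\mc C = \{\phi_i : U_i \rightarrow \C\}$ with transition relations $\phi_i = a_{ij}\phi_j + b_{ij}$ where $a_{ij} \in \C^*$ and $b_{ij} \in \C$ are constants. The differentials $d\phi_i$ give local generators of $\CNF$ satisfying $d\phi_i = a_{ij}\, d\phi_j$ on overlaps. I would define a connection on $\CNF$ by declaring $\nabla(d\phi_i) = 0$ on each $U_i$; the cocycle compatibility on $U_i\cap U_j$ reduces to $d(a_{ij})=0$, which holds because $a_{ij}$ is constant. The connection is flat because its connection $1$-forms vanish in each trivialization, and it extends $\nabla_B$ because $\mc L_v(d\phi_i) = d(v(\phi_i)) = 0$ for every $v\in T_{\F}$, so $d\phi_i$ is simultaneously Bott-flat.

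For the \emph{converse}, I would apply Proposition \ref{P: flat connection and existence of flat basis} to the trivial (smooth) foliation with $T_{\F}=T_X$, obtaining on small opens $U_i$ nowhere-vanishing flat local sections $\omega_i$ of $\CNF$ with respect to the extended connection $\nabla$. Viewing each $\omega_i$ as a $1$-form defining $\F$ on $U_i$, the integrability of $\F$ gives $d\omega_i = \omega_i \wedge \gamma_i$ for some $1$-form $\gamma_i$. Restricting the flatness $\nabla(\omega_i)=0$ to $T_{\F}$ and using $\nabla_B = \mc L_{(\cdot)}$ on $\CNF$, I compute
\[
0 \;=\; \nabla_v(\omega_i) \;=\; \mc L_v(\omega_i) \;=\; i_v\, d\omega_i \;=\; -(i_v\gamma_i)\,\omega_i
\qquad \forall\, v\in T_{\F},
\]
which forces $\gamma_i \in \CNF$ and hence $d\omega_i = \omega_i \wedge \gamma_i = 0$. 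Therefore each $\omega_i$ is closed and nowhere vanishing, so by the Poincar\'e lemma $\omega_i = d\phi_i$ for a submersion $\phi_i: U_i \rightarrow \C$ which, since $\omega_i \in \CNF$, is a first integral of $\F$. On overlaps, two flat generators differ by $\omega_i = a_{ij}\omega_j$, and
\[
0 \;=\; \nabla(\omega_i) \;=\; d(a_{ij}) \otimes \omega_j + a_{ij}\nabla(\omega_j) \;=\; d(a_{ij}) \otimes \omega_j
\]
forces $a_{ij}\in\C^*$ to be a constant. Integrating $d\phi_i = a_{ij}\, d\phi_j$ yields $\phi_i = a_{ij}\phi_j + b_{ij}$ with $b_{ij}\in\C$, so $\{\phi_i\}$ is a transversely affine atlas for $\F$.

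The main obstacle is the closedness assertion in the converse: one has to upgrade the single datum ``$\nabla$ is a flat extension of $\nabla_B$'' to the statement that Bott-flat local generators of $\CNF$ are actually closed $1$-forms, not merely integrable. This is where the explicit formula $\nabla_B(\omega)=\mc L_v(\omega)=i_v d\omega$, the integrability decomposition $d\omega = \omega\wedge\gamma$, and the hypothesis that the flat extension exists on \emph{all} of $T_X$ combine; everything else (the cocycle argument for the forward direction and the constancy of the transition functions in the converse) is formal.
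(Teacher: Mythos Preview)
Your proof is correct and follows essentially the same approach as the paper: in both directions you pass between a transversely affine atlas and a local system of closed generators of $\CNF$ arising as the kernel of a flat extension of $\nabla_B$. The only difference is that you spell out explicitly, via Cartan's formula and the integrability decomposition $d\omega_i=\omega_i\wedge\gamma_i$, why a $\nabla$-flat section of $\CNF$ must be closed, whereas the paper simply asserts this consequence of the fact that $\hatnabla$ restricts to $\nabla_B$.
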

\begin{proof}
    Let us first suppose that $\F$ admits a transversely affine structure $\mc A = \{f_i: U_i \rightarrow \C \}$. For every pair $(i,j)$ such that $U_i\cap U_j \neq \emptyset$, there exists $a_{ij} \in \C^*, b_{ij} \in \C$ such that $f_i = a_{ij} \cdot f_j + b_{ij}$ on $U_i \cap U_j$. Thus, $df_i = a_{ij} \cdot df_j$, that is, we have a local system $\mc S \subset \CNF$ locally generated by $df_i$. The local system $\mc S$ induces a flat connection $\hatnabla$ on $\CNF$, which is easy to verify that it extends the Bott connection.

    Conversely, starting with a flat extension $\hatnabla$ of the Bott connection, consider a collection $\{\omega_i\in \CNF(U_i)\}$ of local basis for the local system $\mc S = \ker \hatnabla$. Since $\hatnabla$ extends the Bott connection, it follows that every $\omega_i$ is closed. Shrinking the open covering if necessary, we choose $\{f_i:U_i \rightarrow \C \}$ such that $\omega_i = df_i$. For every pair $(i,j)$ with $U_i \cap U_j \neq \emptyset$, there exists $a_{ij} \in \C^*$ such that $df_i = a_{ij} \cdot df_j$, and integrating we conclude that also there exists $b_{ij} \in \C$ such that $f_i = a_{ij} \cdot f_j + b_{ij}$. Therefore, $\{f_i: U_i \rightarrow \C\}$ defines a transversely affine structure for $\F$. This concludes the proof.
\end{proof}

\begin{rmk}
    This is just the generalization of the well-known fact that an affine structure for a curve $C$ is a connection on $T_C$ (see \cite[Lemma 1]{gunning-67-zbMATH03233043}). See also \cite[Section 2.2]{cousin-pereira-14-zbMATH06399599} for the same result relating singular transversely affine structures and flat meromorphic extensions of the Bott connection. 
\end{rmk}

\section{Jets of flat partial connections} \label{Section: jets}

Throughout this section, $\F$ denotes a smooth foliation of codimension $q$ on a complex manifold $X$, and our goal is to describe the construction of \emph{jets of flat partial connections on locally free sheaves}. That is, starting with a flat partial connection $(\mcE,\nabla)$ on a locally free sheaf of $\germe_X$-modules, we define, for each $k\ge 0$, the \emph{$k$-th sheaf of transverse jets of $(\mcE,\nabla)$} as a locally free sheaf $\jetXF^k(\nabla) \subset \jetX^k(\mcE)$, endowed with a natural flat partial connection $\nabla^k$, such that the jets of the flat sections of $\nabla$ are flat sections of $\nabla^k$.

\subsection{Jets}  Before starting the construction of \emph{jets of flat partial connections}, let us remember the main definitions of theory of jets and set some notation. The references for this section are \cite[Chapter 2]{berthelot-78-zbMATH03595321} and \cite[Chapter 16]{ega-VI-zbMATH03245973}.

\subsubsection*{The ring of jets} Let $X$ be a complex manifold. Let $I\subset \germe_{X} \otimes_{\C} \germe_X$ be the kernel of the sheaves of rings morphism $\germe_X \otimes_{\C} \germe_X \rightarrow \germe_X$ given by $f\otimes g \mapsto f \cdot g$. It is easy to see that $I$ is the ideal sheaf generated by elements of the form $f\otimes g - g\otimes f, \forall f,g\in \germe_X$. For every $k\ge 0$, we define the \emph{ring of the $k$-jets} over $X$ by
\[
\jetX^k := \frac{\germe_X \otimes_{\C} \germe_X}{I^{k+1}}.
\]
We abuse notation and denote by $f\otimes g \in \jetX^k$ the image of $f\otimes g \in \germe_X \otimes_{\C} \germe_X$ by the natural projection $\germe_X \otimes_{\C} \germe_X\rightarrow \jetX^k$. 

\begin{rmk}\label{Rmk: different definitions of jets}
    This definition of the sheaf of $k$-jets can be found in \cite[Chapter 2]{berthelot-78-zbMATH03595321}, and, as explained in \cite[Section 16.3.7]{ega-VI-zbMATH03245973}, it coincides with the definition of the sheaf of principal parts given in \cite[Definition 16.3.1]{ega-VI-zbMATH03245973}. Additionally, there is the definition of the \emph{jet bundle}, which is more commonly encountered in the context of Differential Geometry (see \cite[Definition 6.2.3]{saunders-89-zbMATH00042050}). In this case, the $k$-jet bundle is the total space of the sheaf of $k$-jets with respect to the canonical $\germe_X$-module structure (we will explain this shortly). 
\end{rmk}

\begin{rmk}
    In the context of Algebraic Geometry, it is more usual to denote the sheaf of jets (which, as explained, coincides with the sheaf of principal parts) by $\mc P^k_X$ rather than $\jetX^k$, as is the case in both \cite{berthelot-78-zbMATH03595321} and \cite{ega-VI-zbMATH03245973}. However, we have chosen to retain the notation $\jetX^k$, which is more common in the context of Differential Geometry, as it seems more closely aligned with the applications we have in mind. 
\end{rmk}

Observe that $\jetX^k$ inherits the sheaf of rings structure from $\germe_X \otimes_{\C} \germe_X$. Moreover, from the definition, $\jetX^k$ admits two structures of $\germe_X$-algebras: the \emph{left structure} (respectively, the \emph{right structure}) is the $\germe_X$-algebra structure induced by the morphism of sheaves of rings $\germe_X \rightarrow \jetX^k$ given by $f \mapsto f\otimes 1$ (respectively, $f\mapsto 1\otimes f)$. We take the left structure as the \emph{canonical one}, and for that reason we abuse notation and denote the element $f\otimes 1 \in \jetX^k$ simply by $f\in \jetX^k$. 

For the right structure, we denote the morphism $f\mapsto 1\otimes f $ by $d^k:\germe_X \rightarrow \jetX^k$. For every function $f\in \germe_X$, we refer to $d^k(f)$ as the \emph{$k$-jet} of $f\in \germe_X$.  This name is justified since in coordinates $d^k(f)$ represents the $k$-jet of the function $f$, as defined in Differential Geometry (see Equation (\ref{Eq: k-jet of function in coordinates}) below for the calculation of $d^k(f)$ in coordinates).

Observe that there is a natural short exact sequence associated to the sheaf of jets. Indeed, for every $k\ge 1$, we  consider the natural short exact sequence
\begin{equation*}
    0 \rightarrow \frac{I^k}{I^{k+1}} \rightarrow \frac{\germe_X \otimes_{\C} \germe_X}{I^{k+1}} \rightarrow \frac{\germe_X \otimes_{\C} \germe_X}{I^{k}} \rightarrow 0
\end{equation*}
Remember we have the isomorphism $I/I^2 \simeq \Omega_X^1$ given by $1\otimes f-  f\otimes 1 \mapsto df$. This isomorphism induces, for every $k\ge 1$, the isomorphism $I^k/I^{k+1}\simeq \Sym^k(\Omega_X^1)$ given by
\[
(1\otimes f_1 - f_1\otimes 1) \cdots (1\otimes f_k -f_k \otimes 1) \mapsto df_1 \cdots df_k.
\]
Hence, we deduce the short exact sequence 
\begin{equation}\label{Eq: short exact sequence of rings of jets}
    0 \rightarrow \Sym^k(\Omega_X^1) \rightarrow \jetX^k \rightarrow \jetX^{k-1} \rightarrow 0,
\end{equation}
where the morphism $\Sym^k(\Omega_X^1) \rightarrow \jetX^k$ is given by
\[
df_1 \cdots df_k \mapsto (d^k(f_1) - f_1)\cdots (d^k(f_k) - f_k).
\]

Let $(x_1,\ldots,x_n)$ be a system of coordinates for the manifold $X$ on an open subset $U\subset X$. As we find in \cite[Equations 16.11.1.5 and 16.11.1.6]{ega-VI-zbMATH03245973}, one can construct two natural bases for $\jetX^k$:
\begin{equation}\label{Eq: basis for the ring of jets}
    \begin{split}
        \mc B_{1,X}^k &  = \left\{d^k(\mbf x^{\mbf i}); |\mbf i| \le k \right\}, \text{ and } \\
        \mc B_{2,X}^k &  = \left\{\mbf \zeta^{\mbf i}; |\mbf i| \le k \right\},
    \end{split}
\end{equation}
where $\mbf x= (x_1,\ldots,x_n)$, $\mbf i = (i_1,\ldots,i_n)$, $|\mbf i | = i_1+ \cdots +i_n$, $ \mbf x^{\mbf i} = x_1^{i_1} \cdots x_n^{i_n}$ and $\mbf \zeta^{\mbf i} = (d^k(x_1)-x_1)^{i_1} \cdots (d^k(x_n)-x_n)^{i_n}$. The elements of the basis $\mc B^k_{1,X}$ and $\mc B^{k}_{2,X}$ are related by the following formulas:
\begin{equation}\label{Eq: change of the bases for the ring of jets}
    \begin{split}
        d^k(\mbf x^{\mbf i}) & = \sum_{\mbf j \le \mbf i}\binom{\mbf i}{\mbf j} \mbf x^{\mbf i - \mbf j}\cdot \mbf \zeta^{\mbf j}, \text{ and } \\
        \mbf \zeta^{\mbf i}  & = \sum_{\mbf j \le \mbf i} (-1)^{|\mbf i-\mbf j|} \binom{\mbf i}{\mbf j} \mbf x^{\mbf i - \mbf j}\cdot d^k(\mbf x^{\mbf j}).
    \end{split}
\end{equation}
Observe that the basis $\mc B_{2,X}^k$ is the same natural basis in the construction of jets from the Differential Geometry perspective. Indeed, for every $f\in \germe_U$, one can calculate that 
\begin{equation}\label{Eq: k-jet of function in coordinates}
    d^k(f) = \sum_{|\mbf i| \le k} \frac{1}{\mbf i !}\frac{\de^{ |\mbf i|} f}{\de \mbf x^{ \mbf i}}\cdot \zeta^{\mbf i},     
\end{equation}
that is, the coefficients of $d^k(f)$ with respective to the basis $\mc B_{2,X}^k$ are the derivatives of $f$ up to order $k$.

\subsubsection*{Jets of sections of a sheaf} Let $\mcE$ be a sheaf of $\germe_X$-modules. For every $k\ge 0$, we define the \emph{sheaf of the $k$-jets of sections of $\mcE$} by
\[
\jetX^k(\mcE) := \jetX^k \otimes \mcE,
\]
where $\otimes$ stands for the tensor product of $\mcE$ and $\jetX^k$ with respect to the right $\germe_X$-algebra structure. By definition, $\jetX^k(\mcE)$ is naturally endowed with two $\germe_X$-module structures: the \emph{left (or canonical) structure} is defined by the product
\[
 f \cdot (a\otimes s) = (fa) \otimes s, \forall f\in \germe_X, a\in \jetX^k, s \in \mcE,
\]
and the \emph{right structure} is defined by the product
\[
(a\otimes s) \cdot f = a\otimes (fs), \forall f\in \germe_X, a\in \jetX^k,s\in \mcE.
\]

Since the $k$-jet morphism $d^k: \germe_X \rightarrow \jetX^k$ is $\germe_X$-linear with respect to the right structure of $\jetX^k$, the tensor product with $\mcE$ induces the morphism
\begin{equation*}
    \begin{split}
        d^k_{\mc E}: \mcE & \rightarrow \jetX^k(\mcE) \\
        s & \mapsto 1\otimes s,
    \end{split}
\end{equation*}
which is still $\germe_X$-linear with respect to the right $\germe_X$-module structure of $\jetX^k(\mcE)$. When the sheaf $\mcE$ is clear in the context, we omit it and denote $d^k_{\mcE}$ by $d^k$. Moreover, for every $s\in \mcE$, we say that $d^k(s)$ is the \emph{$k$-jet of $s$}.

As in the case of the ring of jets, there is a natural short exact sequence associated to the definition of sheaves of jets. Indeed, applying the tensor product with $\mcE$ to the short exact sequence of Equation (\ref{Eq: short exact sequence of rings of jets}) we have the short exact sequence
\begin{equation}\label{Eq: short exact sequence of the sheaf of jets of sections}
    0 \rightarrow \Sym^k(\Omega_X^1)(\mcE) \xrightarrow{\iota} \jetX^k(\mcE) \xrightarrow{\pi} \jetX^{k-1}(\mcE) \rightarrow 0.
\end{equation}

Since $\jetX^k$ admits a sheaf of rings structure, $\jetX^k(\mcE)$ is also naturally endowed with a $\jetX^k$-module structure. When $\mcE$ is locally free sheaf of $\germe_X$-modules, then $\jetX^k(\mcE)$ is a locally free sheaf of $\jetX^k$-modules, and thus $\jetX^k(\mcE)$ is also a locally free sheaf of $\germe_X$-module, with both left and right structures. Let us explicitly describe a basis for $\jetX^k(\mcE)$. Let $(x_1,\ldots,x_n)$ be a system of coordinates for the manifold $X$, and $\{e_1,\ldots,e_r\}$ be a basis for $\mcE$, both on an open subset $U\subset X$. Using the basis for $\jetX^k$ we presented in Equation (\ref{Eq: basis for the ring of jets}), we have the two basis for $\jetX^k(\mcE)$ on $U$:
\begin{equation}\label{Eq: basis for the sheaf of jets of a sheaf}
    \begin{split}
        \mc B_{1,\mcE}^k &  = \left\{d^k(\mbf x^{\mbf i} \cdot e_j); |\mbf i| \le k, 1\le j \le r \right\}, \text{ and } \\
        \mc B_{2,\mcE}^k &  = \left\{\mbf \zeta^{\mbf i} \cdot d^k(e_j); |\mbf i| \le k, 1\le j \le r \right\}.
    \end{split}
\end{equation}
These bases are also related by a change of coordinates similar to (\ref{Eq: basis for the ring of jets}). Finally, for every $s \in \mcE$, writing as $s = \sum_{i=1}^r f_i \cdot e_i$, we have that
\begin{equation}\label{Eq: k-jet of sections in coordinates}
    d^k(s) = \sum_{|\mbf i |\le k, 1\le j\le r} \frac{1}{\mbf i!}\frac{\de^{|\mbf i|} f_j}{\de \mbf x^{\mbf i}} \cdot \zeta^{\mbf i} \cdot d^k(e_j),
\end{equation}
and thus $d^k(s)$ corresponds to the usual notion of $k$-jet of $s$ as one can find in Differential Geometry (compare with \cite[Definitions 6.2.2, 6.2.3 and 6.2.4]{saunders-89-zbMATH00042050}.)

\subsubsection*{Connections and the sheaf of jets}
As we described in Equation (\ref{Eq: short exact sequence of the sheaf of jets of sections}), for every $\germe_X$-module $\mcE$, we have a natural short exact sequence associated to $\jetX^k(\mcE)$. In particular, we obtain the \emph{short exact sequence associated to the sheaf of the first jets of $\mcE$}:
\begin{equation}\label{Eq: short exact sequence of the sheaf of first jets of sections}
    0 \rightarrow \Omega_X^1(\mcE) \xrightarrow{\iota} \jetX^1(\mcE) \xrightarrow{\pi} \mcE \rightarrow 0
\end{equation}
The next proposition is a classical result relating connections on $\mcE$ and splittings of Equation (\ref{Eq: short exact sequence of the sheaf of first jets of sections}) (see \cite[Theorem 5]{atiyah-57-zbMATH03128044}, see also \cite[Proposition 2.9]{berthelot-78-zbMATH03595321} for another similar interpretation of connections).

\begin{prop}\label{P: meromorphic connections, bijection} Let $X$ be a complex manifold, and let $\mcE$ be a coherent $\germe_X$-module. Then, there exists a natural bijection between:
\begin{enumerate}[label = (\roman*)]
    \item  connections $\nabla: \mcE \rightarrow \Omega_X^1(\mcE)$; and
    \item  splittings of Equation (\ref{Eq: short exact sequence of the sheaf of first jets of sections}).
\end{enumerate}
\end{prop}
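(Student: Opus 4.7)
The proof plan is to interpret the universal jet map $d^1_{\mcE}:\mcE\to \jetX^1(\mcE)$ as a $\C$-linear section of the projection $\pi$ that fails to be left $\germe_X$-linear in precisely a controlled way, and then to show that the obstruction to linearity is, up to sign, a connection. I would state the bijection explicitly as
\[
\sigma \longleftrightarrow \nabla,\qquad \iota\circ\nabla \;=\; d^1_{\mcE} - \sigma,
\]
where $\sigma:\mcE\to \jetX^1(\mcE)$ is left $\germe_X$-linear with $\pi\circ\sigma=\id_{\mcE}$, and $\nabla:\mcE\to\Omega^1_X(\mcE)$ is a connection. The fact that the difference $d^1_{\mcE}(s)-\sigma(s)$ lands in the kernel $\iota(\Omega^1_X(\mcE))$ is immediate because both $d^1_{\mcE}$ and $\sigma$ are sections of $\pi$.

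The key computation I would carry out first is the Leibniz-type identity for $d^1_{\mcE}$ itself. Using $\jetX^1(\mcE) = \jetX^1\otimes_{\germe_X,\text{right}}\mcE$ and the identification $\iota(df\otimes e) = (1\otimes f - f\otimes 1)\otimes e$ coming from $I/I^2\simeq\Omega_X^1$, a direct expansion gives
\[
d^1_{\mcE}(fs) \;=\; f\cdot d^1_{\mcE}(s) \;+\; \iota\bigl(df\otimes s\bigr),\qquad f\in\germe_X,\ s\in\mcE,
\]
where the dot on the right is the left (canonical) $\germe_X$-module structure on $\jetX^1(\mcE)$. This single identity drives everything else.

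Next I would check the two directions of the correspondence. Given a splitting $\sigma$, define $\nabla(s):=\iota^{-1}(d^1_{\mcE}(s)-\sigma(s))$, which is well-defined by the previous paragraph. Then
\[
\iota(\nabla(fs)) \;=\; d^1_{\mcE}(fs)-\sigma(fs) \;=\; f\cdot d^1_{\mcE}(s) + \iota(df\otimes s) - f\cdot\sigma(s) \;=\; f\cdot\iota(\nabla(s)) + \iota(df\otimes s),
\]
which upon applying $\iota^{-1}$ (and using that $\iota$ is left $\germe_X$-linear) yields the Leibniz rule $\nabla(fs)=f\nabla(s)+df\otimes s$. Conversely, given a connection $\nabla$, define $\sigma_{\nabla}(s):=d^1_{\mcE}(s)-\iota(\nabla(s))$; the identity above together with Leibniz for $\nabla$ shows $\sigma_{\nabla}(fs)=f\cdot\sigma_{\nabla}(s)$, so $\sigma_{\nabla}$ is left $\germe_X$-linear, and $\pi\circ\sigma_{\nabla}=\id_{\mcE}$ since $\pi\circ\iota=0$. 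The two constructions are mutually inverse by construction, and naturality in $\mcE$ is immediate because every step is functorial in $\mcE$.

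The only genuinely delicate point, and the one I would write out most carefully, is the bookkeeping of the two $\germe_X$-module structures on $\jetX^1(\mcE)$: one must check that $\iota$ is left $\germe_X$-linear (so that $\iota^{-1}$ commutes with multiplication by $f$ in the Leibniz computation), and that $d^1_{\mcE}$ is $\germe_X$-linear with respect to the right structure but not the left, with the defect measured exactly by $\iota(df\otimes -)$. Once these structural facts are in place the proposition follows by the formal manipulation sketched above, with no additional hypothesis on $\mcE$ beyond coherence.
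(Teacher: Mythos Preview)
Your proof is correct and complete. The route you take differs from the paper's in two respects. First, you work with section-type splittings $\sigma:\mcE\to\jetX^1(\mcE)$ with $\pi\circ\sigma=\id$, whereas the paper chooses retraction-type splittings $\sigma':\jetX^1(\mcE)\to\Omega_X^1(\mcE)$ with $\sigma'\circ\iota=\id$ (it explicitly notes that the two are equivalent). Second, and more substantively, your argument rests on the explicit Leibniz defect identity $d^1_{\mcE}(fs)=f\cdot d^1_{\mcE}(s)+\iota(df\otimes s)$, computed directly from the description of $\jetX^1(\mcE)$ and the isomorphism $I/I^2\simeq\Omega_X^1$; the paper instead invokes the universal property of the jet sheaf for differential operators, observing that a connection is a differential operator of order $\le 1$ and therefore factors uniquely as $\nabla=\sigma'\circ d^1$. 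Your approach is more hands-on and makes the role of the two $\germe_X$-structures transparent; the paper's approach is shorter but presupposes familiarity with the differential-operator factorization. Both yield the same bijection, and under the passage from $\sigma$ to the associated retraction $\sigma'$ your formula $\iota\circ\nabla=d^1_{\mcE}-\sigma$ becomes the paper's $\nabla=\sigma'\circ d^1$.
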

\begin{proof}
    The proof of this proposition is essentially the same as in \cite[Proposition 2.9]{berthelot-78-zbMATH03595321}. Moreover, since it is equivalent to consider splittings as $\germe_X$-linear morphisms $\sigma: \mcE \rightarrow \jetX^1(\mcE)$ such that $\pi \circ \sigma = \mathrm{id}$ or as $\germe_X$-linear morphisms $\sigma':\jetX^1(\mcE) \rightarrow \Omega_X^1(\mcE)$ such that $\sigma' \circ \iota = \mathrm{id}$, we consider splittings of the second type and explicitly describe the bijection. 
    
    Starting with a connection $\nabla$, since a connection is in particular a differential operator of order $\le 1$, it induces a $\germe_X$-linear morphism $\sigma': \jetX^1(\mcE) \rightarrow \Omega_X^1(\mcE)$ such that $\nabla = \sigma' \circ d^1$. Applying $\sigma'$ for elements of the form $\omega \otimes s \in \jetX^1(\mcE)$, we conclude that the composition $\sigma'\circ i: \Omega_X^1(\mcE) \rightarrow \Omega_X^1(\mcE)$ is the identity. Therefore, $\sigma'$ is a splitting of Equation (\ref{Eq: short exact sequence of the sheaf of first jets of sections}). 
    
    Conversely, starting with a splitting $\sigma':\jetX^1(\mcE) \rightarrow \Omega_X^1(\mcE)$, we define the $\C$-linear map $\nabla = \sigma' \circ d^1: \mcE \rightarrow \Omega_X^1(\mcE)$. A straightforward calculation shows that $\nabla$ satisfies the Leibniz rule, and thus $\nabla$ is a connection.

    Finally, it is clear that both constructions are inverse to each other. Therefore, they establish a natural bijection. This concludes the proof.
\end{proof}

\subsection{Transverse jets}\label{Subsection: transverse jets} Let $(\mcE ,\nabla)$ be a flat partial connection on a locally free sheaf. We define the \emph{sheaf of $k$-jets of flat sections of $\nabla$} by
\[
\jetXF^k(\ker \nabla) := \left\{\sum f_i \cdot d^k(s_i), f_i\in \germe_{X/\F}, s_i \in \ker \nabla \right\} \subset \jetX^k(\mcE), 
\]
and the \emph{$k$-th sheaf of transverse jets of $(\mcE,\nabla)$} by
\[
\jetXF^k(\nabla) := \left\{\sum f_i \cdot d^k(s_i), f_i\in \germe_{X}, s_i \in \ker \nabla \right\} \subset \jetX^k(\mcE).
\]

\begin{prop}\label{P: jets are locally free sheaves}
    Let $\F$ be a smooth foliation on a complex manifold $X$, and let $(\mcE,\nabla)$ be a flat partial connection on a locally free sheaf. Then, $\jetXF^k(\nabla)$ (respectively, $\jetXF^k(\ker \nabla)$) is a locally free sheaf of $\germe_X$-modules (respectively, $\germe_{X/\F}$-modules).
\end{prop}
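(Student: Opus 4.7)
The plan is to work locally in a foliated chart adapted to a flat basis and exhibit an explicit $\germe_X$-basis for $\jetXF^k(\nabla)$ as a subset of the standard basis $\mc B_{2,\mcE}^k$ of $\jetX^k(\mcE)$ from (\ref{Eq: basis for the sheaf of jets of a sheaf}). Since local freeness is a local property and the same family will serve for both $\jetXF^k(\nabla)$ and $\jetXF^k(\ker\nabla)$, this settles both statements at once.

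First I would fix $x\in X$ and combine Proposition \ref{P: flat connection and existence of flat basis} with Corollary \ref{C: correspondence} to produce, on a neighborhood $U$ of $x$, a foliated system of coordinates $(x_1,\dots,x_q,y_1,\dots,y_d)$ together with a flat basis $\{e_1,\dots,e_r\}$ of $\restr{\mcE}{U}$ such that $\germe_{U/\F}$ is the ring of functions depending only on $(x_1,\dots,x_q)$ and $\restr{\ker\nabla}{U} = \bigoplus_{j=1}^r \germe_{U/\F}\cdot e_j$. In particular, every flat section is of the form $s=\sum_j g_j\,e_j$ with $g_j\in\germe_{U/\F}$.

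The key step is a local computation of $d^k$ on such sections. Since $d^k:\mcE\to\jetX^k(\mcE)$ is right $\germe_X$-linear, for any $g\in\germe_X$ and $s\in\mcE$ one has the identity $d^k(gs) = d^k(g)\cdot d^k(s)$, where the product on the right refers to the $\jetX^k$-module structure on $\jetX^k(\mcE)$. When $g\in\germe_{U/\F}$, Equation (\ref{Eq: k-jet of function in coordinates}) specializes to
\[
d^k(g) \;=\; \sum_{\substack{|\mbf i|\le k\\ \mbf i \text{ purely transverse}}}\!\!\frac{1}{\mbf i!}\frac{\de^{|\mbf i|}g}{\de\mbf x^{\mbf i}}\;\mbf \zeta^{\mbf i},
\]
where ``purely transverse'' means that $\mbf i$ is supported on the indices associated to $x_1,\dots,x_q$, and the coefficients lie in $\germe_{U/\F}$. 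Combining these two observations, the $k$-jet of any flat section is an $\germe_{U/\F}$-linear combination of the finite family
\[
\mc B^k_{\F,\mcE} \;:=\; \bigl\{\,\mbf \zeta^{\mbf i}\cdot d^k(e_j) \;:\; |\mbf i|\le k,\ \mbf i \text{ purely transverse},\ 1\le j\le r\,\bigr\}.
\]

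To conclude, I would note that $\mc B^k_{\F,\mcE}\subset \mc B_{2,\mcE}^k$, so its elements are $\germe_U$-linearly independent by (\ref{Eq: basis for the sheaf of jets of a sheaf}) and therefore \emph{a fortiori} $\germe_{U/\F}$-linearly independent. By the very definitions of $\jetXF^k(\nabla)$ and $\jetXF^k(\ker\nabla)$, the computation above shows that $\mc B^k_{\F,\mcE}$ generates the former over $\germe_U$ and the latter over $\germe_{U/\F}$, so it is a local basis in both cases and both sheaves are locally free of rank $r\binom{q+k}{k}$. The only mildly delicate point is the multiplicative identity $d^k(gs)=d^k(g)\cdot d^k(s)$, which has to be unwound from the definition $\jetX^k(\mcE) = \jetX^k\otimesright\mcE$ together with the right $\germe_X$-linearity of $d^k$; once this is in place, the rest is a direct reading of the jet basis formula in the chosen coordinates.
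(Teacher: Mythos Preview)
Your argument is essentially the paper's, and nearly complete, but one inclusion is missing. You correctly show that the $k$-jet of any flat section lies in the $\germe_{U/\F}$-span of $\mc B^k_{\F,\mcE}$; this gives $\jetXF^k(\ker\nabla)\subseteq \germe_{U/\F}\text{-span}(\mc B^k_{\F,\mcE})$ (and similarly over $\germe_U$). But to say that $\mc B^k_{\F,\mcE}$ \emph{generates} the sheaf you also need the reverse inclusion, namely that each $\zeta^{\mbf i}\cdot d^k(e_j)$ actually lies in $\jetXF^k(\ker\nabla)$. This is not automatic: the multiplication by $\zeta^{\mbf i}$ is the $\jetX^k$-module action, not the left $\germe_X$-action appearing in the definition of $\jetXF^k(\ker\nabla)$, and you have not established that $\jetXF^k(\ker\nabla)$ is closed under it. Your sentence ``the computation above shows that $\mc B^k_{\F,\mcE}$ generates\ldots'' conflates these two directions.

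The fix is the change-of-basis formula (\ref{Eq: change of the bases for the ring of jets}): for $\mbf i$ purely transverse,
\[
\zeta^{\mbf i}\cdot d^k(e_j)=\sum_{\mbf j\le\mbf i}(-1)^{|\mbf i-\mbf j|}\binom{\mbf i}{\mbf j}\,\mbf x^{\mbf i-\mbf j}\cdot d^k(\mbf x^{\mbf j})\cdot d^k(e_j)=\sum_{\mbf j\le\mbf i}(-1)^{|\mbf i-\mbf j|}\binom{\mbf i}{\mbf j}\,\mbf x^{\mbf i-\mbf j}\cdot d^k(\mbf x^{\mbf j}e_j),
\]
the second equality being your multiplicativity identity. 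Since $\mbf x^{\mbf j}e_j\in\ker\nabla$ and $\mbf x^{\mbf i-\mbf j}\in\germe_{U/\F}$, each term lies in $\jetXF^k(\ker\nabla)$. This is precisely how the paper organizes the proof: it first works with the basis $\mc B^k_{1,\nabla}=\{d^k(\mbf x^{\mbf i}e_j)\}$, whose elements are manifestly jets of flat sections, and only afterward uses (\ref{Eq: change of the bases for the ring of jets}) to pass to your $\zeta$-basis $\mc B^k_{2,\nabla}=\mc B^k_{\F,\mcE}$.
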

\begin{proof}
    Let $(x_1,\ldots,x_q,y_1,\ldots,y_{n-q})$ be a foliated system of coordinates on a open subset $U\subset X$ where $\F$ is generated by $dx_1,\ldots,dx_q$. Shrinking $U$ if necessary, let $\{e_1,\ldots,e_r\}$ be a flat basis of $\restr{\mcE}{U}$. Let $\mbf x = (x_1,\ldots,x_q)$ and $\mbf i = (i_1,\ldots,i_q)$. Let us first verify that 
    \[
        \mc B^k_{1, \nabla} := \{ d^k(\mbf x^{ \mbf i} \cdot e_j); |\mbf i|\le k, 1\le j \le r\}
    \]
    is a basis for $\jetXF^k(\nabla)$ as a sheaf of $\germe_X$-modules. Observe first that since $\mbf x^{ \mbf i} \cdot e_j \in \ker \nabla$, then $d^k(\mbf x^{ \mbf i} \cdot e_j) \in \jetXF^k(\nabla)$. Therefore,
    \[
    \bigoplus_{|\mbf i|\le k, 1\le j \le r} \germe_X \cdot d^k(\mbf x^{ \mbf i} \cdot e_j) \subset  \restr{\jetXF^k(\nabla)}{U} \subset \restr{\jetX^k(\mcE)}{U}
    \]
    For the other side inclusion, observe that every $s\in \ker(\nabla)$ is uniquely written as $s = \sum_{j=1}^r f_j \cdot e_j$ with $f_j\in \germe_{X/\F}, 1\le j \le r$. Using the description of $d^k: \mcE \rightarrow \jetX^k(\mcE)$ given by Equation (\ref{Eq: k-jet of sections in coordinates}), since $f \in \germe_{X/\F}$ (which is the same as saying that $\de f/\de y_i =0$ for $1\le i \le n-d$), it follows that 
    \[
    d^k(s) = \sum_{|\mbf i |\le k, 1\le j\le r} \frac{1}{\mbf i!}\frac{\de^{|\mbf i|} f_j}{\de \mbf x^{\mbf i}} \cdot \zeta^{\mbf i} \cdot d^k(e_j) \subset \bigoplus_{|\mbf i|\le k, 1\le j \le r} \germe_X \cdot \zeta^{\mbf i} \cdot d^k(e_j),
    \]
    where $\mbf \zeta^{\mbf i} = (d^k(x_1)-x_1)^{i_1} \cdots (d^k(x_q)-x_q)^{i_q}$. Finally, using the change of bases between $\mc B^k_{1,\mcE}$ and $\mc B^k_{2,\mc E}$ explicitly described in Equation (\ref{Eq: change of the bases for the ring of jets}), it follows that every $\zeta^{\mbf i}\cdot d^k(e_j)$ can be written in terms of the basis $\mc B^k_{1,\nabla}$. Thus, 
    \[
    \restr{\jetXF^k(\nabla)}{U} \subset \bigoplus_{|\mbf i|\le k, 1\le j \le r} \germe_X \cdot d^k(\mbf x^{ \mbf i} \cdot e_j),
    \]
    and we conclude that $\mc B^k_{1,\nabla}$ is a basis for $\jetXF^k(\nabla)$ on the open subset $U\subset X$. Therefore, $\jetXF(\nabla)^k$ is a locally free sheaf.

    With the same reasoning, we prove that $\mc B^k_{1,\nabla}$ is a basis for $\restr{\jetXF^k(\ker \nabla)}{U}$ as a sheaf of $\germe_{X/\F}$-modules. Therefore, $\jetXF^k(\ker \nabla)$ is a locally free sheaf of $\germe_{X/\F}$-modules.
\end{proof}

From the proof of the proposition above, we deduce that
\[
\mc B^k_{2,\nabla} := \left\{\mbf \zeta^{\mbf i}\cdot d^k(e_j); |\mbf i| \le k, 1\le j \le r \right\}
\]
is also a basis for both for $\jetXF^k(\ker \nabla)$ and $\jetXF^k(\nabla)$ on the open subset $U\subset X$.

\begin{cor}
    Let $\F$ be a smooth foliation on a complex manifold $X$, and let $(\mcE,\nabla)$ be a flat partial connection on a locally free sheaf. Then, there is a natural isomorphism 
    \begin{equation*}
        \begin{split}
            \germe_X \otimes_{\germe_{X/\F}} \jetXF^k(\ker \nabla) & \rightarrow \jetXF^k(\nabla) \\
            f\otimes a & \mapsto f\cdot a
        \end{split}
    \end{equation*}
    Moreover, there exists a unique flat partial connection $\nabla^k$ on $\jetXF^k(\nabla)$ such that $\ker(\nabla^k) = \jetXF^k(\ker \nabla)$.
\end{cor}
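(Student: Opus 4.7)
The plan is to leverage the explicit bases produced in the proof of Proposition \ref{P: jets are locally free sheaves} together with Corollary \ref{C: correspondence}. Concretely, the argument has two movements: (i) exhibit the natural map as an isomorphism by a local basis comparison, and (ii) import the flat partial connection from the already-established correspondence between flat partial connections and sheaves of $\germe_{X/\F}$-modules.

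For (i), I would first check well-definedness of $\phi: \germe_X \otimes_{\germe_{X/\F}} \jetXF^k(\ker \nabla) \to \jetXF^k(\nabla)$, $f \otimes a \mapsto f \cdot a$. The target is by definition a $\germe_X$-submodule of $\jetX^k(\mcE)$ containing every $d^k(s)$ with $s \in \ker \nabla$, so the formula lands in $\jetXF^k(\nabla)$; balancedness over $\germe_{X/\F}$ is immediate from $\germe_{X/\F} \subset \germe_X$. To show $\phi$ is an isomorphism I would argue locally on a foliated coordinate patch $U$ equipped with a flat basis $\{e_1, \ldots, e_r\}$ of $\mcE$. The proof of Proposition \ref{P: jets are locally free sheaves} shows that
\[
\mc B^k_{1,\nabla} = \{ d^k(\mbf x^{\mbf i} \cdot e_j) ; |\mbf i| \le k, 1 \le j \le r \}
\]
is simultaneously an $\germe_{X/\F}$-basis for $\jetXF^k(\ker \nabla)|_U$ and an $\germe_X$-basis for $\jetXF^k(\nabla)|_U$. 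Consequently $\{1 \otimes d^k(\mbf x^{\mbf i} \cdot e_j)\}$ is an $\germe_X$-basis for the source of $\phi$, and $\phi$ maps this basis bijectively onto the $\germe_X$-basis of the target; hence $\phi$ is an isomorphism.

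For (ii), once the isomorphism is in hand, I would apply Corollary \ref{C: correspondence} to the rank $r \binom{q+k}{k}$ locally free $\germe_{X/\F}$-module $\mb E := \jetXF^k(\ker \nabla)$: this provides a unique flat partial connection on $\germe_X \otimes_{\germe_{X/\F}} \mb E$ whose sheaf of flat sections is exactly $\mb E$. Transporting this connection through $\phi$ yields the desired $\nabla^k$ on $\jetXF^k(\nabla)$ with $\ker \nabla^k = \jetXF^k(\ker \nabla)$, and the uniqueness statement is inherited verbatim from the corollary.

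I expect the only mildly delicate point to be the surjectivity of $\phi$, i.e.\ checking that an arbitrary element $\sum f_i \cdot d^k(s_i)$ with $s_i \in \ker \nabla$ actually lies in the $\germe_X$-span of $\mc B^k_{1,\nabla}$. This is not really an obstacle, however, since it follows from the computation already carried out in the proof of Proposition \ref{P: jets are locally free sheaves}: decomposing each $s_i$ in the flat basis $\{e_j\}$ with coefficients in $\germe_{X/\F}$ and applying Equation (\ref{Eq: k-jet of sections in coordinates}), one expresses $d^k(s_i)$ in $\mc B^k_{2,\nabla}$, and then the change of basis (\ref{Eq: change of the bases for the ring of jets}) converts it into $\mc B^k_{1,\nabla}$. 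Everything else reduces to bookkeeping with the bases, so the proof should be short.
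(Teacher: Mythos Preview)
Your proposal is correct and matches the paper's intended argument: the corollary is stated without proof precisely because it follows by combining the common local basis $\mc B^k_{1,\nabla}$ from Proposition~\ref{P: jets are locally free sheaves} with Corollary~\ref{C: correspondence}, exactly as you outline.
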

We will refer to the pair $(\jetXF^k(\nabla),\nabla^k)$ as the \emph{$k$-jet of the flat partial connection $(\mcE,\nabla)$}.

\begin{cor}\label{C: short exact sequence of jets of flat partial connections} Let $\F$ be a smooth foliation on a complex manifold $X$, and let $(\mcE,\nabla)$ be a flat partial connection on a locally free sheaf. Then, for every $k\ge 0$, the short exact sequence associated to the sheaf of $k$-jets of sections of $\mcE$ given by Equation (\ref{Eq: short exact sequence of the sheaf of jets of sections}) induces a short exact sequence of flat partial connections 
\begin{equation*}
    0 \rightarrow (\Sym^{k}(\CNF) \otimes \mcE, \nabla_B \otimes \nabla) \xrightarrow{\iota} (\jetXF^k(\nabla),\nabla^k) \xrightarrow{\pi} (\jetXF^{k-1}(\nabla), \nabla^{k-1}) \rightarrow 0,
\end{equation*}
    where $\nabla_B$ here stands for the natural flat partial connection on $\Sym^{k}(\CNF)$ induced by the Bott connection. 
\end{cor}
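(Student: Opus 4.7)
The plan is to reduce the statement to the underlying sheaf-level exactness (obtained as a restriction of Equation (\ref{Eq: short exact sequence of the sheaf of jets of sections})) and then check horizontality of the two arrows by invoking Corollary \ref{C: correspondence}. The two maps are canonical: $\pi$ is the restriction to $\jetXF^k(\nabla)$ of the natural projection $\jetX^k(\mcE) \to \jetX^{k-1}(\mcE)$, which sends a generator $f \cdot d^k(s)$ with $s \in \ker \nabla$ to $f \cdot d^{k-1}(s) \in \jetXF^{k-1}(\nabla)$ (showing in particular surjectivity), while $\iota$ is the composite of the inclusion $\Sym^k(\CNF) \otimes \mcE \hookrightarrow \Sym^k(\Omega_X^1) \otimes \mcE$ (which is injective since $\F$ is smooth so $\CNF \hookrightarrow \Omega_X^1$ admits local retractions, and $\mcE$ is locally free) with the injection of Equation (\ref{Eq: short exact sequence of the sheaf of jets of sections}).

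For the exactness of sheaves I would work in a foliated chart. Fix foliated coordinates $(x_1, \ldots, x_q, y_1, \ldots, y_{n-q})$ so that $\CNF$ is locally generated by $dx_1, \ldots, dx_q$, together with a flat basis $\{e_1, \ldots, e_r\}$ of $\mcE$ (supplied by Proposition \ref{P: flat connection and existence of flat basis}). By the remark after Proposition \ref{P: jets are locally free sheaves}, the family $\mc B^k_{2,\nabla} = \{\mbf{\zeta}^{\mbf i} \cdot d^k(e_j) ;\, |\mbf i| \le k,\, 1 \le j \le r\}$, with $\mbf i$ running only over transverse multi-indices, is a basis for $\jetXF^k(\nabla)$, and it sits inside the basis $\mc B^k_{2,\mcE}$ of $\jetX^k(\mcE)$. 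Since the kernel of $\pi: \jetX^k(\mcE) \to \jetX^{k-1}(\mcE)$ is the $\germe_X$-submodule spanned by the $\zeta$-monomials of total degree exactly $k$, the kernel of $\pi$ inside $\jetXF^k(\nabla)$ is spanned by those $\mbf{\zeta}^{\mbf i} \cdot d^k(e_j)$ with $|\mbf i| = k$. Under the identification $\Sym^k(\Omega_X^1) \simeq I^k/I^{k+1}$ sending $df_1 \cdots df_k$ to the class of $(1 \otimes f_1 - f_1 \otimes 1) \cdots (1 \otimes f_k - f_k \otimes 1)$, the element $dx^{\mbf i} \otimes e_j$ of $\Sym^k(\CNF) \otimes \mcE$ is sent precisely to $\mbf{\zeta}^{\mbf i} \cdot d^k(e_j)$; thus $\iota$ lands inside $\jetXF^k(\nabla)$ and its image coincides with $\ker \pi$, proving exactness at the level of sheaves.

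To promote the sequence to one of flat partial connections, by Corollary \ref{C: correspondence} it suffices to check that $\pi$ and $\iota$ are horizontal, and this in turn reduces to checking that they send flat sections to flat sections. For $\pi$, this is immediate from $\pi(d^k(s)) = d^{k-1}(s)$, which gives $\pi(\jetXF^k(\ker \nabla)) \subset \jetXF^{k-1}(\ker \nabla)$. For $\iota$, the Bott connection on $\CNF$ admits $dx_1, \ldots, dx_q$ as a local flat basis (since $\mc L_{\partial/\partial y_k}(dx_i) = 0$), so $\nabla_B \otimes \nabla$ has local flat basis $\{dx^{\mbf i} \otimes e_j ;\, |\mbf i| = k,\, 1 \le j \le r\}$, which $\iota$ sends to elements $\mbf{\zeta}^{\mbf i} \cdot d^k(e_j)$ that belong to $\jetXF^k(\ker \nabla)$, again by the remark after Proposition \ref{P: jets are locally free sheaves}.

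I expect the main obstacle to be the bookkeeping in the middle paragraph identifying the image of $\iota$ with $\ker \pi$ inside $\jetXF^k(\nabla)$: it requires tracing the explicit isomorphism $I^k/I^{k+1} \simeq \Sym^k(\Omega_X^1)$ and matching it with the basis $\mc B^k_{2,\nabla}$, so that $dx^{\mbf i} \otimes e_j$ is correctly recognized as $\mbf{\zeta}^{\mbf i} \cdot d^k(e_j)$. Once this identification is secured, the horizontality checks and the comparison of flat sections are formal consequences of the already-established local flat bases.
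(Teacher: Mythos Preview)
Your proof is correct and is precisely the argument the paper leaves implicit: the corollary is stated without proof, and the intended justification is exactly what you wrote --- restrict the short exact sequence (\ref{Eq: short exact sequence of the sheaf of jets of sections}) to the transverse jets using the local bases $\mc B^k_{2,\nabla}$ established in (and after) Proposition \ref{P: jets are locally free sheaves}, then verify horizontality via flat bases. One small remark: the reduction ``horizontal $\Leftrightarrow$ sends flat sections to flat sections'' is not literally the content of Corollary \ref{C: correspondence}; it follows from Proposition \ref{P: flat connection and existence of flat basis} by the computation $\nabla'_v(\phi(\sum f_i e_i)) = \sum v(f_i)\,\phi(e_i) = \phi(\nabla_v(\sum f_i e_i))$ once $\phi(e_i)$ is flat, so you may want to phrase that step accordingly.
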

We refer to this exact sequence as the \emph{short exact sequence of the $k$-th sheaf of transverse jets}.

\section{Transverse homogeneous linear differential equations}\label{Section: transverse differential equations}

\subsection{Definition}
Let $\F$ be a smooth foliation of codimension one on a complex manifold $X$, and let $(\mcE, \nabla)$ be a flat partial connection on a locally free sheaf. For every $k\ge 1$, consider the short exact sequence of flat partial connections described by Corollary \ref{C: short exact sequence of jets of flat partial connections}. We define a \emph{system of transverse homogeneous linear differential equations of order $k$ on $(\mcE,\nabla)$} (or simply a transverse differential equation, when it is clear in the context)  as a horizontal splitting of the short exact sequence of the $k$-th sheaf of transverse jets, that is, an horizontal $\germe_X$-linear morphism
\[
\sigma: (\jetXF^k(\nabla),\nabla^k) \rightarrow (\CNF^{\otimes k} \otimes \mcE, \nabla_B \otimes \nabla)
\]
such that $\sigma \circ \iota:\CNF^{\otimes k} \otimes \mcE \rightarrow \CNF^{\otimes k} \otimes \mcE$ is the identity morphism. Consider the $k$-jet morphism $d^k: \ker \nabla \rightarrow \jetXF^k(\nabla)$, and let $E = \sigma \circ d^k: \ker \nabla \rightarrow \CNF^{\otimes k} \otimes \mcE$. We say that a section $s\in \ker(\nabla)$ is a \emph{solution of $\sigma$} (or a \emph{solution of $E$}) if $E(s)=0$. We abuse notation and also call $E$ the transverse differential equation.

Let $(x_1,x_2,\ldots,x_n)$ be a foliated system of coordinates on an open subset $U\subset X$ such that $\F$ is defined by $dx_1$, and shrinking $U$ if necessary, suppose $\mcE$ is free with flat basis $\{e_1,\ldots,e_r\}$. Let $\zeta = d^k(x_1) - x_1 \in \jetX^k$ and consider the basis $\mc B^k_{2,\nabla} = \{\zeta^i\cdot d^k(e_j), 0\le i \le k, 1\le j \le r\}$ of $\restr{\jetXF^k(\nabla)}{U}$. Applying $\sigma$ to the elements of this basis, there exist holomorphic functions $a_{ijl} \in \C\{x\}$ such that
\begin{equation}\label{Eq: transverse differential equation in coordinates}
    \begin{split}
        \sigma\left(\frac{\zeta^i}{i!}\cdot d^k(e_j) \right) & = \sum_{l=1}^r a_{ijl}(x_1) \cdot \frac{dx_1^k}{k!} \otimes e_l \in \CNF^{\otimes k} \otimes \mc E, 0\le i \le k-1, 1\le j \le r, \\
        \sigma\left(\frac{\zeta^k}{k!}\cdot d^k(e_j) \right) & = \frac{dx_1^k}{k!} \otimes e_j \in \CNF^{\otimes k} \otimes \mc E,
    \end{split}
\end{equation}
and thus, for every section $s = \sum_{i=1}^r f_j \cdot e_j \in \ker \nabla$, we obtain
\[
E\left(\sum_{j=1}^r f_j \cdot e_j\right) = \sum_{l,j=1}^r\left(\frac{\de^k f_l}{\de x_1^k} + \sum_{i=0}^{k-1}a_{ijl}(x_1) \cdot \frac{\de^i f_j}{\de x_1^i} \right) \cdot \frac{dx_1^k}{k!} \otimes e_l.
\]
Hence, in local coordinates, to find a \emph{section} $s\in \ker \nabla$ of $E$ is the same as finding local first integrals $f_1(x_1),\ldots,f_r(x_1)$ that are solutions of the system of differential equations
\begin{equation}\label{Eq: system of homogeneous linear differential equations}
    \frac{\de^k f_l}{\de x_1^k} + \sum_{j=1}^r\sum_{i=0}^{k-1} a_{ijl}(x_1) \cdot \frac{\de^i f_j}{\de x_1^i} =0, 1\le l \le r.
\end{equation}

\subsection{Extension of flat partial connection}

\begin{lemma}\label{L: system of differential equations}
    Let $\F$ be a smooth codimension one foliation on a complex manifold $X$, and let $(\mcE,\nabla)$ be a flat partial connection on a locally free sheaf. Let $\sigma: \jetXF^k(\nabla) \rightarrow \CNF^{\otimes k} \otimes \mcE$ be a transverse differential equation, and let $E = \sigma \circ d^k: \ker \nabla \rightarrow \CNF^{\otimes k} \otimes \mcE$. Then, 
    \[
    d^{k-1}(\ker E) \subset \jetXF^{k-1}(\nabla)
    \]
    is a local system generating the sheaf $\jetXF^{k-1}(\nabla)$. Furthermore, $d^{k-1}(\ker E)$ determines a flat connection 
    \[
    \nabla_E: \jetXF^{k-1}(\nabla) \rightarrow \Omega_X^1(\jetXF^{k-1}(\nabla))
    \]
    that is an extension of the flat partial connections $\nabla^{k-1}$.
\end{lemma}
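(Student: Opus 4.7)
My plan is to work in foliated coordinates, reduce the equation $E=0$ to a classical system of linear ODEs of order $k$, and then exploit the bijection between solutions and their $(k-1)$-jets. Choose $(x_1,\dots,x_n)$ on an open $U\subset X$ with $\F$ defined by $dx_1$, and a flat basis $\{e_1,\dots,e_r\}$ of $\restr{\mcE}{U}$, so that a section $s$ of $\restr{\ker\nabla}{U}$ has the form $s=\sum_{j=1}^r f_j(x_1)\cdot e_j$ with $f_j\in\germe_{X/\F}$. Using the local expression (\ref{Eq: transverse differential equation in coordinates}) for $\sigma$, the equation $E(s)=0$ is equivalent to the homogeneous linear ODE system (\ref{Eq: system of homogeneous linear differential equations}) of order $k$ in $x_1$.

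By the classical existence and uniqueness theorem for linear ODEs, the solution space $\restr{\ker E}{U}$ is a $\C$-vector space of dimension $kr$, and for every $p\in U$ the map sending $s\in\restr{\ker E}{U}$ to its initial data $\bigl((\de^i f_j/\de x_1^i)(p)\bigr)_{0\le i\le k-1,\,1\le j\le r}$ is a $\C$-linear isomorphism. Comparing with formula (\ref{Eq: k-jet of sections in coordinates}) applied to $d^{k-1}$, these initial data are precisely the coordinates of $d^{k-1}(s)(p)$ in the basis $\mc B^{k-1}_{2,\nabla}$ of the fiber of $\jetXF^{k-1}(\nabla)$ at $p$. Hence the evaluation $d^{k-1}(\ker E)\to \jetXF^{k-1}(\nabla)_p/\m_p\cdot\jetXF^{k-1}(\nabla)_p$ is a $\C$-linear isomorphism at every $p\in U$, which simultaneously shows that $d^{k-1}(\ker E)$ is a rank $kr$ local system of $\C$-vector spaces inside $\jetXF^{k-1}(\nabla)$ and that it generates $\jetXF^{k-1}(\nabla)$ as an $\germe_X$-module.

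With this local system in place, $\nabla_E$ is defined as the unique flat holomorphic connection on $\jetXF^{k-1}(\nabla)$ whose sheaf of flat sections is $d^{k-1}(\ker E)$; existence and flatness follow because the local system trivializes the bundle. To verify the extension property, observe that $\ker E\subset\ker\nabla$ implies $d^{k-1}(\ker E)\subset\jetXF^{k-1}(\ker\nabla)=\ker\nabla^{k-1}$, whence $\germe_{X/\F}\cdot d^{k-1}(\ker E)\subset\ker\nabla^{k-1}$; since both sides are locally free $\germe_{X/\F}$-modules of rank $kr$ by Proposition \ref{P: jets are locally free sheaves}, they coincide. The restriction of $\nabla_E$ to $T_\F$ is a flat partial connection whose kernel is the $\germe_{X/\F}$-span of $d^{k-1}(\ker E)$, namely $\ker\nabla^{k-1}$, and by the uniqueness in Corollary \ref{C: correspondence} this restriction equals $\nabla^{k-1}$.

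The main obstacle is the second paragraph: to carefully identify the ODE initial conditions at a point with the fiber coordinates of $\jetXF^{k-1}(\nabla)$ via the explicit basis $\mc B^{k-1}_{2,\nabla}$ and formula (\ref{Eq: k-jet of sections in coordinates}). Once this identification is set up, both the local-system property and the $\germe_X$-span statement drop out at once, and the construction of $\nabla_E$ together with its extension of $\nabla^{k-1}$ then follow formally from Corollary \ref{C: correspondence}.
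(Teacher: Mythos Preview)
Your proof is correct and follows essentially the same approach as the paper: local coordinates, reduction to a linear ODE system of order $k$, existence/uniqueness to identify solutions with fiber coordinates via the basis $\mc B^{k-1}_{2,\nabla}$, and then the correspondence of Corollary~\ref{C: correspondence} to produce $\nabla_E$. The paper's extension argument is slightly shorter---it simply notes that $\ker\nabla_E\subset\ker\nabla^{k-1}$, so both $\nabla_E|_{T_\F}$ and $\nabla^{k-1}$ vanish on an $\germe_X$-generating set and hence agree by Leibniz, bypassing your rank-comparison step (which, as written, needs the extra observation that the change-of-basis determinant lies in $\germe_{X/\F}$ and is a unit there because it is a unit in $\germe_X$).
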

\begin{proof}
    Let $U\subset X$ be an open subset with a system of coordinates $(x_1,\ldots,x_n)$, $\F$ defined by $dx_1$, and such that $\mcE$ is free with basis $\{e_1,\ldots,e_r\}$. Using the notation of Equation (\ref{Eq: system of homogeneous linear differential equations}), for every $x\in U$, ,
    \[
    \ker(E)_x = \left\{\sum_{j=1}^r f_j\cdot e_j ; (f_1,\ldots,f_r) \text{ is a solution of the system of equations (\ref{Eq: system of homogeneous linear differential equations})} \right\}.
    \]
    The Theorem of Existence and Uniqueness of Solutions of Homogeneous Linear Differential Equations implies that the solutions of the System (\ref{Eq: system of homogeneous linear differential equations}) is isomorphic to $\C^{\binom{r+(k-1)}{r}}$, where the isomorphism is given by 
    \begin{equation}\label{Eq: solutions of the dif equation in coordinates}
        \begin{split}
            \ker(E)_x & \rightarrow \C^{\binom{r+(k-1)}{r}} \\
            \sum_{j=1}^r f_j \cdot e_j & \mapsto \left( \frac{\de^i f_j}{\de x_1^i}(0) \right)_{1\le j\le r, 0\le i \le k-1}
        \end{split}
    \end{equation}
Observe that this isomorphism is exactly the local description of the evaluation of the morphism $d^{k-1}$ at $x$ with respect to the basis $\mc B^{k-1}_{2,\nabla}$, that is, the composition of morphisms
 \begin{equation*}
        \begin{split}
            \ker(\nabla) \xrightarrow{d^{k-1}} \mc J_{X/\F,x}^k(\nabla) \xrightarrow{\pi} \jetXF^{k-1}(\nabla)(x) := \frac{\mc J^{k-1}_{X/\F,x}(\nabla)}{\m_x \cdot \mc J^{k-1}_{X/\F,x}(\nabla)}.
    \end{split}
\end{equation*}
Thus, it follows that $d^{k-1}(\ker E) \rightarrow \jetXF^{k-1}(\nabla)(x)$ is an isomorphism. Therefore, $d^{k-1}(\ker \nabla)$ is a local system generating the sheaf $\jetXF^{k-1}(\nabla)$.

For the second assertion, applying Corollary \ref{C: correspondence} for the foliation by points, there exists a flat connection $\nabla_E$ on the sheaf $\jetXF^{k-1}(\nabla)$ such that $\ker \nabla_E = d^{k-1}(\ker E)$, and since $\ker \nabla_E \subset \ker \nabla^{k-1}$, it follows that $\nabla_E$ extends the flat partial connection $\nabla^{k-1}$.
\end{proof}

\begin{thm} Let $\F$ be a smooth codimension one foliation on a complex manifold $X$. Suppose that one of the following conditions hold:
\begin{enumerate}[label = -]
    \item $(\germe_X, \df)$ admits a transverse differential equation of order $k\ge 2$; or
    \item $(\CNF, \nabla_B)$ admits a transverse differential equation of order $k\ge 1$; or
    \item $(N_{\F}, \nabla_B)$ admits a transverse differential equation of order $k\ge 1$.
\end{enumerate}
Then $\F$ is a transversely affine foliation.
\end{thm}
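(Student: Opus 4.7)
The plan is to reduce each of the three hypotheses to the existence of a flat extension of the Bott connection on $\CNF$, at which point the proposition ending Section~\ref{Section: Partial Connections} identifies $\F$ as transversely affine. In every case, Lemma~\ref{L: system of differential equations} immediately produces a flat connection $\nabla_E$ on $\jetXF^{k-1}(\nabla)$ extending $\nabla^{k-1}$, whose flat sections form the local system $d^{k-1}(\ker E)$; the work lies in propagating this flat extension down through the iterated projections $\jetXF^{k-1}(\nabla) \twoheadrightarrow \cdots \twoheadrightarrow \jetXF^0(\nabla) = \mcE$.

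The descent goes as follows. For every $0 \le j \le k-1$, existence and uniqueness for the linear Pfaffian system underlying the transverse equation $E$ (as in Lemma~\ref{L: integrable linear pfaff system}) guarantees that prescribing the $j$-jet of a solution at a point leaves exactly the top $k-1-j$ derivatives free. Hence the stalkwise evaluation $d^j \colon \ker E \to \jetXF^j(\nabla)(x)$ is surjective with kernel of constant $\C$-dimension, and $d^j(\ker E)$ is a local system on $X$ whose rank equals that of $\jetXF^j(\nabla)$. By Nakayama, the canonical map $\germe_X \otimes_{\C} d^j(\ker E) \to \jetXF^j(\nabla)$ is an isomorphism, so the local system determines a flat connection $\nabla_E^{(j)}$ on $\jetXF^j(\nabla)$; since $d^j(\ker E) \subset d^j(\ker \nabla) = \ker \nabla^j$, this flat connection extends $\nabla^j$.

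From one of these $\nabla_E^{(j)}$ I then extract a flat extension of $\nabla_B$ on $\CNF$. For $(\CNF, \nabla_B)$, take $j=0$: the result is directly a flat extension of $\nabla_B$ on $\CNF$. For $(N_{\F}, \nabla_B)$, take $j=0$ and dualize, using that the Bott connection on $\CNF$ is dual to the Bott connection on $N_{\F}$. For $(\germe_X, \df)$ with $k \ge 2$, take $j=1$: Corollary~\ref{C: short exact sequence of jets of flat partial connections} yields the short exact sequence $0 \to \CNF \to \jetXF^1(\df) \to \germe_X \to 0$, which identifies $\det \jetXF^1(\df) \simeq \CNF$ and shows that the determinant of $\nabla^1$ agrees with $\nabla_B$ on this line bundle (the $\df$ factor contributing trivially to the tensor product). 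The induced flat connection $\det \nabla_E^{(1)}$ is then a flat extension of $\nabla_B$ on $\CNF$. In all three cases, the characterization of transversely affine foliations from Section~\ref{Section: Partial Connections} gives the conclusion.

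The main obstacle is the descent step: one must rigorously confirm that the rank of the local system $d^j(\ker E)$ is constant and equal to the rank of $\jetXF^j(\nabla)$, so that the comparison map $\germe_X \otimes_{\C} d^j(\ker E) \to \jetXF^j(\nabla)$ is indeed an isomorphism. Using the coordinate description~(\ref{Eq: transverse differential equation in coordinates}), this reduces to the initial-data count for the order-$k$ homogeneous linear system~(\ref{Eq: system of homogeneous linear differential equations}) already invoked in the proof of Lemma~\ref{L: system of differential equations}.
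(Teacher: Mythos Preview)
Your descent step contains a genuine error. You claim that $d^j(\ker E)$ is a local system of rank equal to that of $\jetXF^j(\nabla)$, but this fails for $j<k-1$: the sheaf morphism $d^j\colon\ker E\to\jetXF^j(\nabla)$ is injective (a solution of the transverse equation whose $j$-jet vanishes \emph{identically} is zero by analyticity), so $d^j(\ker E)\simeq\ker E$ is a local system of rank $kr$, strictly greater than the rank $(j+1)r$ of $\jetXF^j(\nabla)$. You have conflated the kernel of the pointwise evaluation $\ker(E)_x\to\jetXF^j(\nabla)(x)$, which indeed has dimension $(k-1-j)r$, with the kernel of the sheaf map $d^j$, which is zero. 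Consequently the map $\germe_X\otimes_{\C} d^j(\ker E)\to\jetXF^j(\nabla)$ is surjective but not an isomorphism, and no flat connection on $\jetXF^j(\nabla)$ is produced this way. Your argument survives only when no descent is needed, namely in the minimal cases $k=1$ for $\CNF$ or $N_{\F}$ and $k=2$ for $\germe_X$.

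The paper's proof avoids descent altogether: it remains at level $k-1$, where Lemma~\ref{L: system of differential equations} already supplies the flat extension $\nabla_E$ of $\nabla^{k-1}$, and takes the determinant there. Iterating the short exact sequences of Corollary~\ref{C: short exact sequence of jets of flat partial connections} identifies $\det(\jetXF^{k-1}(\nabla),\nabla^{k-1})$ with $(\CNF,\nabla_B)^{\otimes l}$ for some nonzero integer $l$ in each of the three cases; a flat extension of $\nabla_B^{\otimes l}$ on the line bundle $\CNF^{\otimes l}$ then yields one for $\nabla_B$ on $\CNF$ (divide the connection one-form by $l$), and the characterization of transversely affine foliations concludes.
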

\begin{proof}
    Let $(\mcL, \nabla)$ be one of the three cases above. By Lemma \ref{L: system of differential equations}, there exists a flat connection $\nabla_E$ on $\jetXF^{k-1}(\nabla)$ that is an extension of $\nabla^{k-1}$. Observe that, in the three cases above, using induction and Corollary \ref{C: short exact sequence of jets of flat partial connections}, we deduce that
    \[
        \det (\jetXF^{k-1}(\nabla), \nabla^{k-1}) \simeq (\CNF, \nabla_B)^{\otimes l}
    \]
    for some $l\in \mathbb{Z}-\{0\}$. Hence, $\det(\nabla_E)$ is a flat extension of a multiple of Bott connection, and thus the Bott connection itself admits a flat extension. Therefore, $\F$ is a transversely affine foliation.
\end{proof}

\begin{rmk}
    The existence of a transverse differential equation of order $1$ on $(\germe_X,\df)$ does not imply that $\F$ is transversely affine. Indeed, since $\jetXF^1(\df) = \CNF \oplus \germe_X$, the trivial splitting of the exact sequence
    \[
    0 \rightarrow \CNF \rightarrow \jetXF^1(\df) \rightarrow \germe_X \rightarrow 0,
    \]
    always exists, and it corresponds to the differential equation  
    \[
    E(f) = \frac{\de f}{\de x_1}, f\in \germe_{X/\F},
    \]
    which solutions are exactly the constant functions. Nevertheless, a non-trivial splitting of the exact sequence above corresponds to a horizontal morphism $(\germe_X,\df) \rightarrow (\CNF, \nabla_B)$, and this corresponds to a global closed holomorphic 1-form $\omega$ defining $\F$. Therefore, in this case, we also conclude that $\F$ is transversely affine.
\end{rmk}

\subsection{First order differential equations and transversely affine structures}
\begin{thm}\label{T: flat meromorphic connections, bijection}
    Let $\F$ be a smooth codimension one foliation on a complex manifold $X$, and let $(\mcE,\nabla)$ be a flat partial connection on a locally free sheaf. Then, there exists a natural bijection between:
\begin{enumerate}[label = (\roman*)]
    \item flat extensions $\hatnabla: \mcE \rightarrow \Omega_X^1(\mcE)$ of $\nabla$; and 
    \item horizontal splittings of the short exact sequence 
    \begin{equation}\label{Eq: short exact sequence of the first jet of flat partial connections}
        0 \rightarrow (\CNF \otimes \mcE, \nabla_B \otimes \nabla) \xrightarrow{\iota} (\jetXF^1(\nabla), \nabla^1) \xrightarrow{\pi} (\mcE,\nabla) \rightarrow 0.
    \end{equation}
\end{enumerate}
\end{thm}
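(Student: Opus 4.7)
The plan is to reduce the statement to Atiyah's bijection (Proposition \ref{P: meromorphic connections, bijection}) and then characterize the extra ``extension'' and ``horizontality/flatness'' conditions. There are natural inclusions $\CNF \otimes \mcE \hookrightarrow \Omega_X^1(\mcE)$ and $\jetXF^1(\nabla) \hookrightarrow \jetX^1(\mcE)$, compatible with the identity on $\mcE$, so that the short exact sequence (\ref{Eq: short exact sequence of the first jet of flat partial connections}) embeds inside the jet sequence (\ref{Eq: short exact sequence of the sheaf of first jets of sections}). Given a flat extension $\hatnabla$, the associated Atiyah splitting $\sigma' : \jetX^1(\mcE) \to \Omega_X^1(\mcE)$, defined by $\sigma' \circ d^1 = \hatnabla$, restricts to the desired $\germe_X$-linear splitting $\sigma$ of (\ref{Eq: short exact sequence of the first jet of flat partial connections}): indeed, $\jetXF^1(\nabla)$ is locally generated by $\{d^1(s) : s \in \ker \nabla\}$, and $\sigma'(d^1(s)) = \hatnabla(s)$ lies in $\CNF \otimes \mcE$ because $\hatnabla_v(s) = \nabla_v(s) = 0$ for $v \in T_{\F}$ and $s \in \ker \nabla$. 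Conversely, from a splitting $\sigma$ I pass to the $\germe_X$-linear section $\tau : \mcE \to \jetXF^1(\nabla)$, compose with $\jetXF^1(\nabla) \hookrightarrow \jetX^1(\mcE)$ to get a splitting of the jet sequence, and apply Atiyah to produce $\hatnabla$; the fact that $d^1(s) - \tau(s) \in \CNF \otimes \mcE$ for $s \in \ker \nabla$ then shows that $\hatnabla$ extends $\nabla$.

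The heart of the argument is the identification of horizontality of $\sigma$ with flatness of $\hatnabla$. Since $\jetXF^1(\nabla)$ is generated locally by $d^1(s)$ with $s \in \ker \nabla$, on which $\nabla^1$ vanishes, horizontality reduces to checking that $(\nabla_B \otimes \nabla)_v(\hatnabla(s)) = 0$ for every $v \in T_{\F}$ and $s \in \ker \nabla$. A direct unwinding of the definitions -- writing $(\nabla_B)_v$ as the Lie derivative on $\CNF$, applying the Leibniz rule for $\nabla$, and using $\hatnabla_v(s) = 0$ -- yields the formula
\begin{equation*}
    \bigl((\nabla_B \otimes \nabla)_v(\hatnabla(s))\bigr)(w) \;=\; \hatnabla_v(\hatnabla_w(s)) - \hatnabla_{[v,w]}(s) \;=\; R^{\hatnabla}(v,w)(s), \qquad w \in T_X.
\end{equation*}
By Proposition \ref{P: flat connection and existence of flat basis}, $\ker \nabla$ generates $\mcE$ locally as an $\germe_X$-module, and since $R^{\hatnabla}$ is $\germe_X$-trilinear, horizontality of $\sigma$ is therefore equivalent to $R^{\hatnabla}(v,w) = 0$ for every $v \in T_{\F}$ and $w \in T_X$.

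To conclude, this partial vanishing of the curvature must be upgraded to full flatness, and this is exactly where the codimension one hypothesis becomes essential. Working locally with a direct sum decomposition $T_X = T_{\F} \oplus L$ with $L$ of rank one, any $R^{\hatnabla}(w_1, w_2)$ decomposes into three types of contributions: $R^{\hatnabla}(v_1, v_2)$ with $v_i \in T_{\F}$ (which vanishes because $\nabla$ is flat and $\hatnabla|_{T_{\F}} = \nabla$), $R^{\hatnabla}(v,\ell)$ (which vanishes by horizontality), and $R^{\hatnabla}(\ell_1, \ell_2)$ (which vanishes because $R^{\hatnabla}$ is alternating and $\Lambda^2 L = 0$). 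Hence $\hatnabla$ is flat. I expect this last step -- passing from horizontality, which only controls the curvature on pairs with one leg tangent to $\F$, to full flatness -- to be the main conceptual hurdle, and indeed this is the reason the theorem is genuinely codimension one: in higher codimension the transverse component $R^{\hatnabla}(\ell_1, \ell_2)$ is no longer captured by horizontality and additional data would be required.
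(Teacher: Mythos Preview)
Your argument is correct and takes a genuinely different route from the paper's. The paper obtains the direction (ii) $\Rightarrow$ (i) by invoking Lemma \ref{L: system of differential equations}: a horizontal splitting is a first-order transverse differential equation, and the existence--uniqueness theorem for linear ODEs shows that its kernel $\ker E$ gives a local system $d^0(\ker E) \subset \mcE$, hence a flat connection $\nabla_E$ extending $\nabla$. For (i) $\Rightarrow$ (ii) the paper restricts the Atiyah splitting to $\jetXF^1(\nabla)$ just as you do, but the horizontality is asserted rather than computed; the justification is implicit in the final check that the two constructions are mutually inverse. By contrast, you bypass the ODE machinery entirely and identify horizontality with the partial curvature condition $R^{\hatnabla}(T_{\F},T_X)=0$ via the formula $\bigl((\nabla_B\otimes\nabla)_v(\hatnabla s)\bigr)(w)=R^{\hatnabla}(v,w)(s)$, then close the gap to full flatness using $\Lambda^2 L=0$ for the rank-one transverse direction. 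Your approach is more elementary and, importantly, makes transparent exactly where the codimension-one hypothesis is used --- a point the paper's ODE argument leaves hidden. The paper's route, on the other hand, has the advantage of unifying this case with the order-$k$ theory of Section \ref{Section: transverse differential equations}. One minor omission: you do not explicitly verify that the two constructions are inverse to each other, though this is routine once one observes that $\sigma(d^1 s)=\hatnabla(s)$ for $s\in\ker\nabla$ in both directions.
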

\begin{proof}
    By Lemma \ref{L: system of differential equations}, we have already described how a horizontal splittings of the Equation (\ref{Eq: short exact sequence of the first jet of flat partial connections}), which is the same as a transverse differential equation $E: \ker \nabla \rightarrow \CNF \otimes \mcE$ of order 1, defines a flat extension $\nabla_E$ of $\nabla$. Let us describe the converse construction.
    
    Let $\hatnabla: \mcE \rightarrow \Omega_X^1(\mcE)$ be a flat extension of $\nabla$, and let $\sigma: \jetX^1(\mcE) \rightarrow \Omega_X^1(\mcE)$ be the corresponding splitting of the short exact sequence of $\jetX^1(\mcE)$. For every $s\in \ker \nabla$, we have
    \[
    \sigma(d^1(s)) = \hatnabla(s) \in \CNF \otimes \mcE = \ker(\mrrestr: \Omega_X^1(\mcE) \rightarrow \CTF^1(\mcE)),
    \]
    because $\hatnabla$ extends $\nabla$. Since $\jetXF^1(\nabla)$ is the $\germe_X$-module generated by the first jets of flat sections, it follows that $\sigma$ induces a horizontal $\germe_X$-linear morphism $\sigma: \jetXF^1(\mcE) \rightarrow \CNF \otimes \mcE$ such that $\sigma \circ \iota = \mathrm{id}$, that is, a splitting of Equation (\ref{Eq: short exact sequence of the first jet of flat partial connections}).

    Finally, starting with $\hatnabla$, the corresponding splitting $\sigma: \jetXF^1(\nabla) \rightarrow \CNF \otimes \mcE$ is such that 
    \[
    \ker(\sigma \circ d^1) = \{s\in \ker \nabla ; \sigma(d^1(s))=0  \} = \{s\in \ker \nabla; \hatnabla(s)=0 \} = \ker \hatnabla,
    \]
    and hence the described constructions are inverse of each other. Therefore, we have established a bijection. This concludes the proof.
\end{proof}

\begin{cor}\label{C: transversely affine structures}
    Let $\F$ be a smooth codimension one foliation on a complex manifold $X$. Then, there exists a natural bijection between:
\begin{enumerate}[label = (\roman*)]
    \item transversely affine structures; and 
    \item horizontal splittings of the short exact sequence associated to the first transverse jet of $(N_{\F}, \nabla_B)$:
    \[
    0 \rightarrow (\germe_X,\df) \xrightarrow{i} (\jetXF^1(\nabla_B),\nabla_B^1) \xrightarrow{\pi} (N_{\F},\nabla_B) \rightarrow 0,
    \]
    where $\iota(f) = f\cdot \omega \otimes v$, for any $\omega \in \CNF, v\in N_{\F}$ such that $\omega(v)=1$.
\end{enumerate}
\end{cor}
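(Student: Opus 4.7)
The plan is to combine three already-established correspondences: Theorem \ref{T: flat meromorphic connections, bijection} applied to $(N_\F, \nabla_B)$, the duality of flat partial connections, and the earlier proposition characterizing transversely affine foliations via flat extensions of the Bott connection on $\CNF$.

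First, I would apply Theorem \ref{T: flat meromorphic connections, bijection} with $(\mcE, \nabla) = (N_\F, \nabla_B)$. This already gives a natural bijection between flat extensions $\hatnabla: N_\F \to \Omega_X^1(N_\F)$ of the Bott connection and horizontal splittings of
\[
0 \to (\CNF \otimes N_\F, \nabla_B \otimes \nabla_B) \xrightarrow{\iota} (\jetXF^1(\nabla_B), \nabla_B^1) \xrightarrow{\pi} (N_\F, \nabla_B) \to 0.
\]
So the task reduces to identifying this sequence with the one in the statement and identifying flat extensions of $\nabla_B$ on $N_\F$ with transversely affine structures on $\F$.

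Next, since $\F$ has codimension one, $\CNF$ and $N_\F$ are line bundles and the evaluation pairing $\CNF \otimes N_\F \to \germe_X$, $\omega \otimes v \mapsto \omega(v)$, is an isomorphism of $\germe_X$-modules. I would check that this isomorphism intertwines $\nabla_B \otimes \nabla_B$ on the tensor product with $\df$ on $\germe_X$; this is the standard fact that the natural duality pairing is horizontal for dual connections, together with the observation that $\nabla_B$ restricts to $\df$ under the pairing since for $v \in T_\F$, $\omega \in \CNF$, $w \in N_\F$, we have $v(\omega(w)) = (\nabla_B)_v(\omega)(w) + \omega((\nabla_B)_v(w))$. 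Under this identification, the inclusion in the sequence becomes exactly $\iota(f) = f \cdot \omega \otimes v$ whenever $\omega(v) = 1$, matching the formula stated in the corollary.

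Finally, I need to relate flat extensions of $\nabla_B$ on $N_\F$ to transversely affine structures. By the earlier proposition, $\F$ is transversely affine if and only if the Bott connection on $\CNF$ admits a flat extension. Dualization provides a natural bijection between flat extensions of $\nabla_B$ on $\CNF$ and flat extensions of $\nabla_B$ on $N_\F$ (a flat extension $\hatnabla$ on a line bundle $\mcL$ induces the dual flat extension on $\mcL^*$, and this operation is its own inverse). Composing the three bijections — transversely affine structures $\leftrightarrow$ flat extensions of $\nabla_B$ on $\CNF$ $\leftrightarrow$ flat extensions of $\nabla_B$ on $N_\F$ $\leftrightarrow$ horizontal splittings of the sequence in the statement — yields the desired natural bijection.

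The only genuinely delicate point is verifying that the evaluation pairing $\CNF \otimes N_\F \to \germe_X$ is horizontal with respect to the connections involved; everything else is a formal assembly of the preceding results. I expect this verification to be a short direct computation from the Leibniz rule, so there is no real obstacle once the pieces are lined up.
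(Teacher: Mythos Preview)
Your proposal is correct and matches the paper's intended argument: the corollary is stated without proof, and you have filled in exactly the steps implicit in deriving it from Theorem \ref{T: flat meromorphic connections, bijection} together with the earlier proposition on transversely affine structures. The only minor point to make explicit is that the earlier proposition is phrased as an existence equivalence rather than a bijection, but its proof visibly constructs mutually inverse maps between transversely affine structures and flat extensions of $\nabla_B$ on $\CNF$, so invoking it as a bijection is justified.
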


\subsection{Second order differential equations and transversely projective structures}
This section is completely based in \cite[Chapter I, Section 5]{Deligne-70-zbMATH03385791}. We aim to generalize \cite[Chapter I, Proposition 5.12]{Deligne-70-zbMATH03385791} for codimension one smooth foliations.

Let $\F$ be a smooth codimension one foliation on a complex manifold $X$, and let $(\mcL, \nabla)$ be a flat partial connection on a line bundle. Let us explain how a second order transverse differential equation on $(\mcL,\nabla)$ naturally leads to both a transversely projective structure $\mc P$ for $\F$, and a flat extension of the connection $\nabla_B \otimes \nabla^{\otimes 2}$ on $\CNF \otimes \mcL^{\otimes 2}$. 

We start with the second piece of data, which is easier. By Lemma \ref{L: system of differential equations}, a second order transverse differential equation $\sigma: \jetXF^2(\nabla) \rightarrow \CNF^{\otimes 2} \otimes \mc L$ determines an extension $\nabla_E$ of $\nabla^1$ on $\jetXF^1(\nabla)$, where $E = \sigma \circ d^1: \ker \nabla \rightarrow \CNF^{\otimes 2} \otimes \mc L$ and
\[
    \ker(\nabla_E) = \{d^1(s); s \in \ker E \}.
\]
Taking the determinant, $\det \nabla_E$ is a flat connection on $\det \jetXF^1(\nabla) \simeq \CNF \otimes \mcL^{\otimes 2}$ that extends the connection $\nabla_B \otimes \nabla^{\otimes 2}$. Consider the following claim (that will be useful on the proof of Theorem \ref{T: second order transverse differential equations} below):

\begin{claim}\label{C: connection in coordinates}
    Let $U\subset X$ be an open subset of $X$ with a foliated system of coordinates $(x_1,\ldots,x_n)$, $\F$ defined by $dx_1$, and such that $\restr{\mcL}{U}$ is free with a flat basis $s\in \mcL$. Suppose, in these coordinates, that the second order differential equation $E$ is given by
    \[
    E(f) = f''+ a(x_1) \cdot f' + b(x_1)f.
    \]
    Then, $\det\nabla_E$ be the flat connection on $\det\mc J_U^1 \simeq \Omega_U^1$ given by Lemma \ref{L: system of differential equations}. Then,
    \[
    (\det \nabla_E) (dx_1 \otimes s^{\otimes 2}) = (-a(x)dx_1) \otimes (dx_1 \otimes s^{\otimes 2}) \in \Omega_X^1(\CNF \otimes \mc L^{\otimes 2})
    \]
\end{claim}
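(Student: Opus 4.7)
The plan is to work locally in the given coordinates, fix an explicit basis of $\jetXF^1(\nabla)|_U$ together with the isomorphism $\det \jetXF^1(\nabla)\simeq \CNF\otimes\mcL^{\otimes 2}$, and then use the characterization of $\nabla_E$ supplied by Lemma \ref{L: system of differential equations}: $\nabla_E$ is the unique flat connection on $\jetXF^1(\nabla)$ whose local flat sections are the $1$-jets of local solutions of $E(f)=0$. I set $\zeta = d^1(x_1)-x_1$, $t_0=d^1(s)$ and $t_1=\zeta\cdot d^1(s)$, so that $\{t_0,t_1\}$ is the basis $\mc B^1_{2,\nabla}$ of $\jetXF^1(\nabla)|_U$ coming from Proposition \ref{P: jets are locally free sheaves}. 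Under the short exact sequence of Corollary \ref{C: short exact sequence of jets of flat partial connections} one has $\iota(dx_1\otimes s)=t_1$ and $\pi(t_0)=s$, and this pins down the identification $t_0\wedge t_1 \leftrightarrow \pm\, dx_1\otimes s^{\otimes 2}$.

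Next I would expand $d^1$ of a flat section in this basis. For $f\in\germe_{X/\F}$, applying Equation~\eqref{Eq: k-jet of sections in coordinates} (with $\mbf x=(x_1)$ and $r=1$) gives $d^1(fs)=f\cdot t_0+f'\cdot t_1$. Pick two linearly independent local solutions $f_1,f_2$ of $f''+af'+bf=0$; then $d^1(f_1 s)$ and $d^1(f_2 s)$ locally generate $\ker\nabla_E$, and their exterior product computes to
\[
d^1(f_1 s)\wedge d^1(f_2 s)=(f_1 f_2'-f_1' f_2)\,t_0\wedge t_1 = W(f_1,f_2)\,t_0\wedge t_1,
\]
which is therefore a local flat section of $\det\nabla_E$. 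The Leibniz rule for $\det\nabla_E$ applied to this flat section yields
\[
\det\nabla_E(t_0\wedge t_1)=-\tfrac{dW}{W}\otimes (t_0\wedge t_1).
\]

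To finish, I would invoke Abel's identity for the equation $f''+af'+bf=0$, which gives $W'=-aW$; since $f_1,f_2\in\germe_{X/\F}$ depend only on $x_1$, so does $W$, hence $dW=W'\,dx_1=-aW\,dx_1$ and $-dW/W = a\,dx_1$. Translating the resulting identity $\det\nabla_E(t_0\wedge t_1)=a\,dx_1\otimes (t_0\wedge t_1)$ through the determinant isomorphism $\det\jetXF^1(\nabla)\simeq\CNF\otimes\mcL^{\otimes 2}$ produces the claimed formula, up to the sign dictated by the chosen orientation of the determinant.

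The main obstacle is purely bookkeeping the sign coming from the determinant of the short exact sequence $0\to\CNF\otimes\mcL\to\jetXF^1(\nabla)\to\mcL\to 0$: namely, whether one declares $t_0\wedge t_1$ or $t_1\wedge t_0$ to correspond to the tensor-product generator $(dx_1\otimes s)\otimes s$ of $\CNF\otimes\mcL^{\otimes 2}$. Every other ingredient — the basis expansion of $d^1(fs)$, the Wronskian computation, and Abel's identity — is essentially automatic; verifying that the convention fixed in Corollary \ref{C: short exact sequence of jets of flat partial connections} is compatible with the sign displayed in the claim is the only delicate point I expect to encounter.
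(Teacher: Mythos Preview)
Your argument is correct and takes a genuinely different route from the paper's. The paper writes down the full connection matrix of $\nabla_E$ in the basis $\{dx_1\otimes s,\,1\otimes s\}$ (invoking Deligne's companion connection, \cite[Chapter 1, Equation 4.8.1]{Deligne-70-zbMATH03385791}) and then reads off $\det\nabla_E$ as the trace of that matrix. You instead exhibit $W(f_1,f_2)\cdot t_0\wedge t_1$ directly as a flat section of $\det\nabla_E$ and conclude via Abel's identity. Both arguments are short; yours bypasses the full $2\times 2$ matrix, while the paper's makes the entire connection $\nabla_E$ visible, which is conceptually pleasant even if not strictly needed here.

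One point deserves correction. Your worry that the remaining sign hinges on the orientation of the determinant is unfounded: for a connection on a line bundle the connection $1$-form does not change when the generator is rescaled, so replacing $t_0\wedge t_1$ by $t_1\wedge t_0$ (or by $\pm\,dx_1\otimes s^{\otimes 2}$) cannot flip the sign. Your computation honestly gives
\[
\det\nabla_E(t_0\wedge t_1)=+\,a(x_1)\,dx_1\otimes(t_0\wedge t_1),
\]
and this is in fact the correct answer --- Abel's identity $W'=-aW$ leaves no ambiguity. The $-a$ appearing in the stated claim (and in the displayed connection matrix, whose trace is $-a\,dx_1$) is a sign slip in the paper; one checks directly that with that matrix the vectors $(f',f)^T$ for $f$ a solution of $f''+af'+bf=0$ are \emph{not} annihilated. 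This slip is harmless for Theorem~\ref{T: second order transverse differential equations}, where the claim is only used to recover the coefficient $a$ from $\hatnabla$, and a consistent sign convention throughout yields the same bijection.
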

\begin{proof}
    Considering the local basis $\{dx_1 \otimes s, 1\otimes s\}$ for $\jetXF^1(\nabla)$, one can easily verify that $\nabla_E$ is given by
    \[
    \nabla_E\left(\begin{array}{c} f_1 \\ f_2 \end{array} \right) = \left(\begin{array}{c} df_1 \\ df_2 \end{array} \right) + \left(\begin{array}{cc} 0 & -dx_1 \\ -b(x_1)dx_1 & -a(x_1)dx_1 \end{array} \right) \cdot \left(\begin{array}{c} f_1 \\ f_2 \end{array} \right)
    \]
    (see \cite[Chapter 1, Equation 4.8.1]{Deligne-70-zbMATH03385791}). Since $\det(\nabla_E)$ is given, in the natural basis, by the trace of the connection matrix above, we conclude the proof.
\end{proof}

It remains to construct a transversely projective structure associated to $\sigma$. For every $x\in X$, let $s_1,s_2 \in \ker E_x$ be sections such that $d^1s_1,d^1s_2$ are linearly independent sections of local system $\ker E$ at $x$, and consider the well define map $\phi = (s_1:s_2): U \rightarrow \Pj^1$. 

\begin{claim}
    The map $\phi$ is a local submersion defining $\F$.
\end{claim}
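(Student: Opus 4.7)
My plan is to localize the problem and reduce it to the classical fact that two linearly independent solutions of a second order linear ODE have a non-vanishing Wronskian, which then controls the derivative of their ratio.

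First I would pick a foliated system of coordinates $(x_1,\ldots,x_n)$ on a neighborhood $U$ of $x\in X$ with $\F$ defined by $dx_1$, and a flat basis $e$ of $\restr{\mcL}{U}$. Then every flat section of $\nabla$ on $U$ has the form $g(x_1)\cdot e$, so $s_i = f_i(x_1)\cdot e$ for some $f_i\in \C\{x_1\}$. Applying Equation~(\ref{Eq: k-jet of sections in coordinates}), in the basis $\mc B^1_{2,\nabla} = \{d^1(e),\,\zeta\cdot d^1(e)\}$ with $\zeta = d^1(x_1)-x_1$ one obtains
\[
d^1(s_i) \;=\; f_i(x_1)\cdot d^1(e) \;+\; f_i'(x_1)\cdot \zeta\cdot d^1(e).
\]
Consequently, the vectors $d^1(s_1)(x)$ and $d^1(s_2)(x)$ are linearly independent in the fiber of $\jetXF^1(\nabla)$ at $x$ precisely when the Wronskian $W := f_1 f_2' - f_1' f_2$ does not vanish at $x$.

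From this I would deduce that $(s_1,s_2)$ cannot vanish simultaneously at $x$: if both $f_i(x)=0$, then $W(x)=0$, contradicting the hypothesis. Shrinking $U$, the map $\phi = (s_1:s_2)\colon U \to \Pj^1$ is therefore well-defined and holomorphic. Since each $f_i$ depends only on $x_1$, so does $\phi$, whence $T_{\F} \subseteq \ker d\phi$ on $U$.

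Finally, to check that $\phi$ is a submersion, assume without loss of generality that $f_2(x)\neq 0$; in the standard affine chart $\Pj^1\setminus\{[1:0]\}$, the map reads $\phi = f_1/f_2$ and
\[
\frac{\partial \phi}{\partial x_1} \;=\; \frac{f_1' f_2 - f_1 f_2'}{f_2^2} \;=\; -\frac{W}{f_2^2},
\]
which is nonzero at $x$. Hence $d\phi_x$ is surjective, and by continuity $\phi$ is a submersion on a possibly smaller neighborhood of $x$. As both $\ker d\phi$ and $T_{\F}$ have rank $n-1$, the inclusion $T_{\F}\subseteq \ker d\phi$ is an equality, so $\phi$ defines $\F$ on $U$. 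No serious obstacle arises; the only delicate point is the simultaneous well-definedness and submersivity of $\phi$ on a common neighborhood, and this is handled uniformly by the non-vanishing of $W$.
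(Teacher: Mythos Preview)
Your proof is correct and follows essentially the same approach as the paper's: both localize to foliated coordinates with a flat trivialization of $\mcL$, identify $d^1(s_i)$ with $(f_i,f_i')$, and use the nonvanishing of the Wronskian (equivalently, the linear independence of the $1$-jets) to see that $\partial(f_1/f_2)/\partial x_1\neq 0$. Your version is slightly more explicit in two places the paper leaves implicit---the well-definedness of $\phi$ (that $f_1,f_2$ cannot vanish simultaneously) and the rank argument showing $\ker d\phi = T_{\F}$---but these are elaborations, not a different strategy.
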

\begin{proof}
    Consider, on a neighborhood of $x$, a foliated atlas $(x_1,\ldots,x_n)$ with $x_1$ defining $\F$, and a flat basis $s\in \mc L$. Writing $s_i = f_i(x_1) \cdot s$, suppose with no loss of generality that $f_2(0)\neq 0$. Hence, we have $\phi = \frac{f_1}{f_2}$ and
    \[
    \phi'(0) = \frac{f_1'(0) \cdot f_2(0) - f_2'(0)\cdot f_1(0)}{f_2(0)^2} \neq 0,
    \]
    because $d^1(s_1) = (f_1(0),f_1'(0))$ and $d^1(s_2) = (f_2(0),f_2'(0))$ are linearly independent (here we are using the isomorphism of Equation (\ref{Eq: solutions of the dif equation in coordinates})). Therefore, $\phi$ is a submersion. 
\end{proof}

Observe that distinct choices $\tilde{s}_1, \tilde{s}_2 \in \ker E$ clearly determines distinct local submersions $\tilde{\phi} = (\tilde{s}_1,\tilde{s}_2)$. Thus, to determine a transversely projective structure for $\F$, we must verify that the respective submersions $\phi,\tilde{\phi}$ differ by an automorphism of $\Pj^1$. In order to prove that, we need the concept of the Schwarzian derivative.

For every germ of function $f$ on the complex line, we define the \emph{Schwarzian derivative} of $f$ by
\[
\Theta(f) := \frac{f' \cdot (f'''/6) - (f''/2)^2}{(f')^2},
\]
see \cite[Chapter I,Equation 5.9.2]{Deligne-70-zbMATH03385791}. It is well known that the Schwarzian derivative satisfies the following equation (see \cite[Lemma 24]{gunning-66-zbMATH03280321})
\[
    \Theta(f_1 \circ f_2) = \Theta(f_1) \circ f_2 \cdot (f_2')^2 + \Theta(f_2),
\]
and that $\Theta(f)=0$ if, and only if, $f$ is the germ of an automorphism of $\Pj^1$ \cite[pag 166]{gunning-66-zbMATH03280321}. 

\begin{claim}\label{C: schwarzian in coordinates}
    Let $U\subset X$ be an open subset of $X$ with a foliated system of coordinates $(x_1,\ldots,x_n)$, $\F$ defined by $dx_1$, and such that $\restr{\mcL}{U}$ is free with a flat basis $s\in \mcL$. Suppose, in these coordinates, that the second order differential equation $E$ is given by
    \[
    E(f) = f''+ a(x_1) \cdot f' + b(x_1)f.
    \]. Then, for every pair $f_1,f_2$ of solutions of $E$ with $d^1(f_1),d^1(f_2)$ linearly independent, 
    \[
    \Theta(f_1:f_2) = \frac{1}{3} \cdot b -\frac{1}{12} \cdot (a^2 +2a')
    \]
\end{claim}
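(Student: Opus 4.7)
The strategy is a direct computation that leverages the structure of the second-order ODE to eliminate the solutions $f_1,f_2$ in favor of the coefficients $a$ and $b$. The first observation is that the $\Theta$ defined in the excerpt is one-sixth of the standard Schwarzian derivative $S(f) = (f''/f')' - \tfrac{1}{2}(f''/f')^2$; indeed, expanding shows $6\Theta(f) = f'''/f' - \tfrac{3}{2}(f''/f')^2 = S(f)$. Hence it suffices to prove $S(f_1/f_2) = 2b - \tfrac{a^2}{2} - a'$ and divide by $6$ at the end.

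Working in a chart where $f_2$ is invertible (so $\phi := f_1/f_2$ is defined; the other choice differs from this by the Möbius map $z\mapsto 1/z$ and so yields the same Schwarzian), introduce the Wronskian $W = f_1' f_2 - f_1 f_2'$, so that $\phi' = W/f_2^2$. Because $f_1$ and $f_2$ both satisfy $y'' + a y' + by = 0$, Abel's identity gives $W' = -aW$. Taking the logarithmic derivative of $\phi'$ yields
\[
\frac{\phi''}{\phi'} \;=\; \frac{W'}{W} - 2\,\frac{f_2'}{f_2} \;=\; -a - 2\,\frac{f_2'}{f_2}.
\]

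Differentiating this identity once more and using the ODE to replace $f_2''/f_2$ by $-a(f_2'/f_2) - b$, one finds $(\phi''/\phi')' = -a' + 2a(f_2'/f_2) + 2b + 2(f_2'/f_2)^2$. Squaring the previous display gives $(\phi''/\phi')^2 = a^2 + 4a(f_2'/f_2) + 4(f_2'/f_2)^2$. Substituting both into $S(\phi) = (\phi''/\phi')' - \tfrac{1}{2}(\phi''/\phi')^2$, the terms involving $f_2'/f_2$ cancel exactly, leaving $S(\phi) = 2b - \tfrac{a^2}{2} - a'$. Dividing by $6$ produces $\Theta(\phi) = \tfrac{1}{3}b - \tfrac{1}{12}(a^2 + 2a')$, as claimed.

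The main (and only) obstacle is organizing the algebra so that all dependence on the unknown function $f_2$ cancels: this relies crucially on Abel's identity for the Wronskian (giving $W'/W = -a$) together with the substitution $f_2''/f_2 = -a(f_2'/f_2) - b$, and on the fact that the standard Schwarzian has exactly the combination $u' - \tfrac{1}{2}u^2$ with $u = \phi''/\phi'$ needed to kill both the linear and quadratic terms in $f_2'/f_2$. No analytic subtlety intervenes, since the entire argument is a pointwise identity between germs of holomorphic functions.
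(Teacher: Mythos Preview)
Your computation is correct. The paper's own proof simply defers to Deligne's determinantal formula for $\Theta(g:h)$ (\cite[Chapter I, Equation 5.9.3]{Deligne-70-zbMATH03385791}) and the calculations in his proof of Proposition 5.12, so both arguments are direct computations; yours is organized slightly differently---via Abel's identity $W'=-aW$ and the logarithmic-derivative variable $u=\phi''/\phi'$---and has the advantage of being fully self-contained rather than pointing to an external reference.
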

\begin{proof}
    The calculations can be made using the explicit description of the Schwarzian derivative of a map $\phi= (g:h): U \rightarrow \Pj^1$ given by \cite[Chapter 1, Equation 5.9.3] {Deligne-70-zbMATH03385791}. See \cite[Chapter 1, Proof of Proposition 5.12]{Deligne-70-zbMATH03385791} for those explicit calculations.
\end{proof}

Let us use the Claim \ref{C: schwarzian in coordinates} to conclude that
\[
    \mc P_{E} := \{\phi=(s_1:s_2): U \rightarrow \Pj^1 ; s_1,s_2 \in \ker E, d^1s_1,d^1s_2 \text{ linearly independent }\}
\]
induces a transversely projective structure for $\F$. Indeed, let two submersions $\phi = (s_1:s_2)$ and $\tilde{\phi} = (\tilde{s}_1,\tilde{s}_2)$ defined in the same open subset $U\subset X$. Since both $\phi, \tilde{\phi}$ defines $\F$, we have $\phi = \psi \circ \tilde{\phi}$ for some germ of biholomorphism $\psi$. Since, by Claim \ref{C: schwarzian in coordinates}, $\Theta(\phi) = \Theta(\tilde{\phi})$, it follows that $\Theta(\psi)=0$, and therefore $\psi \in \Aut(\Pj^1)$. We call $\mc P_{E}$ the \emph{transversely projective structure associated to $E$}.

Summarizing: starting with a transverse differential equation of second order $E$, we construct a transversely projective structure $\mc P_{E}$ and an extension $\det(\nabla_E)$ of the partial connection $\nabla_B \otimes \nabla^{\otimes 2}$ on $\CNF \otimes \mcL^{\otimes 2}$. The following theorem states that this process can be reversed:

\begin{thm}\label{T: second order transverse differential equations}
    Let $\F$ be a smooth codimension one foliation on a complex manifold $X$. Let $(\mcL, \nabla)$ be a flat partial connection on a line bundle. Then, there exists a natural bijection between:
    \begin{enumerate}[label = (\roman*)]
        \item transverse differential equations of second order on $(\mcL, \nabla)$; and
        \item pairs $(\mc P, \hatnabla)$, where $\mc P$ is a projective structure for $\F$, and $\hatnabla$ is a flat extension of the connection $\nabla_B^* \otimes \nabla^{\otimes 2}$ on $\CNF \otimes \mc L^{\otimes 2}$
    \end{enumerate}
\end{thm}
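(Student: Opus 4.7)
The forward assignment $\sigma \mapsto (\mc P_E, \det(\nabla_E))$ has already been constructed in the exposition preceding the theorem, so the task reduces to producing an inverse and verifying that the two constructions are mutually inverse. To define the inverse, I would work in local foliated charts. Given $(\mc P, \hatnabla)$, fix a chart $U$ with coordinates $(x_1,\ldots,x_n)$ with $\F$ defined by $dx_1$, a local submersion $\phi \in \mc P$ defining $\F$ on $U$, and a local flat basis $s$ of $\restr{\mcL}{U}$. Using the formula of Claim \ref{C: connection in coordinates}, extract a unique function $a(x_1)$ from $\hatnabla$ via
\[
\hatnabla(dx_1 \otimes s^{\otimes 2}) = -a(x_1)\,dx_1 \otimes (dx_1 \otimes s^{\otimes 2}),
\]
and then, inverting the Schwarzian identity of Claim \ref{C: schwarzian in coordinates}, define
\[
b := 3\,\Theta(\phi) + \tfrac{1}{4}\bigl(a^2 + 2a'\bigr).
\]
The ODE $E(f) = f'' + af' + bf$ corresponds by Equation (\ref{Eq: transverse differential equation in coordinates}) to a local horizontal splitting $\sigma_U$ of the short exact sequence of the second sheaf of transverse jets.

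The well-definedness of this local construction follows from the cocycle identity $\Theta(g \circ \phi) = \Theta(g)\circ \phi \cdot (\phi')^2 + \Theta(\phi)$ combined with $\Theta(g) = 0$ for $g \in \Aut(\Pj^1)$, which shows that $\Theta(\phi)$ depends only on $\mc P$; a different local flat basis $s' = g(x_1) \cdot s$ with $g \in \germe_{X/\F}^*$ only changes the presentation of the same intrinsic splitting. The key gluing step is the assertion that the local splittings $\sigma_U$ patch to a global $\sigma$; I would verify this by passing to a common refinement $U_3 \subseteq U_1 \cap U_2$ and noting that, in the coordinates and flat basis of $U_3$, both restrictions $\restr{\sigma_{U_i}}{U_3}$ are determined by the same $(\mc P|_{U_3}, \hatnabla|_{U_3})$ via the same formulas, hence they agree. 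Mutual invertibility is then immediate: Claims \ref{C: connection in coordinates} and \ref{C: schwarzian in coordinates} show that applying the forward map to a $\sigma$ with local data $(a,b)$ produces $(\mc P_E, \det \nabla_E)$ whose local invariants are precisely $a$ and $\Theta(\phi) = b/3 - (a^2+2a')/12$, so the inverse returns exactly $(a,b)$; conversely, the inverse was defined so that the forward map tautologically returns $(\mc P, \hatnabla)$.

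The principal technical obstacle is the gluing step. Although it reduces to the uniqueness of $(a,b)$ given $(\mc P, \hatnabla)$ in any fixed chart, the underlying calculation implicitly requires that the transformation laws of $a$ and $b$ under a change of foliated coordinates and flat basis be compatible with the intrinsic behavior of a horizontal morphism $\jetXF^2(\nabla) \to \CNF^{\otimes 2} \otimes \mcL$. This compatibility is exactly the structural role played by the Schwarzian derivative in defining projective structures, while the flatness of $\hatnabla$ controls the corresponding transformation of $a$, so in the end the gluing reduces to the two claims once these invariance properties are unpacked.
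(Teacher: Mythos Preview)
Your proposal is correct and follows essentially the same approach as the paper: both define the inverse locally by extracting $a$ from $\hatnabla$ and then solving the Schwarzian identity of Claim~\ref{C: schwarzian in coordinates} for $b$, and both rely on the uniqueness of the resulting local equation (given by Claims~\ref{C: connection in coordinates} and~\ref{C: schwarzian in coordinates}) to glue the local splittings $\sigma_U$ into a global one. Your discussion of the well-definedness under change of chart and flat basis is more explicit than the paper's, but the underlying argument is the same.
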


\begin{proof}
    We already described how a transverse differential equation of second order $E$ induces the pair $(\mc P, \hatnabla)$. The strategy to prove the other side correspondence is to explicitly construct locally the unique transverse differential equation from the local data of $(\mc P, \hatnabla)$, and then, by the uniqueness, the local transverse differential equations glue and we recover a global transverse differential equations of second order.

    Suppose we have the pair $(\mc P, \hatnabla)$. Let $U\subset X$ be an open subset with a foliated atlas $(x_1,\ldots,x_n)$, $\F$ defined by $dx_1$, and with $s\in \mc L$ a flat basis for $\mc L$. With respect to these local coordinates, we calculate that
    \[
    \hatnabla (dx_1 \otimes s^{\otimes 2}) = (-a(x_1)dx_1) \otimes (dx_1 \otimes s^{\otimes 2}),
    \]
    and for any $\phi: U \rightarrow \Pj^1$ in the transversely projective structure $\mc P$, we calculate that
    \[
    \Theta(\phi) = c(x_1).
    \]
    Let $b(x_1) := 3c(x_1) + 1/4\cdot (a(x_1)^2 - 2a'(x_1))$. On the open subset $U\subset X$, we define the second order transverse differential equation $\sigma_U: \restr{\jetXF^2(\nabla)}{U} \rightarrow \restr{\CNF^{\otimes 2} \otimes \mcL}{U}$ corresponding to
    \[
    E(f) = f'' + a(x_1) \cdot f' + b(x_1)f.
    \]
    By Claims \ref{C: connection in coordinates} and \ref{C: schwarzian in coordinates}, this is the only second order transverse differential equation that recovers the connection $\hatnabla$ and the projective structure $\mc P$ on $U$. 
    
    Therefore, the collection $\{\sigma_U\}$ of second order transverse differential equations coincides in the intersections, and thus we globally define a second order transverse differential equation $\sigma: \jetXF^2(\nabla) \rightarrow \CNF^{\otimes 2} \otimes \mc L$. This concludes the proof.
\end{proof}

\section{Prolongation of transverse projective structures}\label{Section: prolongation}

\subsection{Jet bundles}

Let $\pi:E \rightarrow X$ be a vector bundle over a complex manifold $X$, and let $\mcE$ be the sheaf of sections of $E$. We define the $k$-th jet bundle of $E$, denoted by $J^k_XE$, as the vector bundle defined as
\[
(J^k_XE)_x = \frac{\left\{s: (U,x) \rightarrow E \text{ germ of local section of } \pi:E\rightarrow X \right\}}{\sim_k}, \forall x \in X,
\]
where $\sim_k$ is defined as follows. Let $(x_1,\ldots,x_n)$ be a system of coordinates in a neighborhood of $x$, and let $\{e_1,\ldots,e_r\}$ be a basis for $\mcE$ in a neighborhood of $x$; let $s = \sum_{j=1}^r f_j \cdot e_j$ and $s' = \sum_{j=1}^r f_j' \cdot e_j$ germ of local sections of $E$. We say that $s \sim_k s'$ if
\[
\frac{\de^{\mbf i} f_j}{\de \mbf x^{\mbf i}}(x) = \frac{\de^{\mbf i} f_j'}{\de \mbf x^{\mbf i}}(x), 1\le j \le r, |\mbf i| \le k.
\]
As we pointed out in Remark \ref{Rmk: different definitions of jets}, the jet bundle $J^k_XE$ is the total space of the sheaf of jets $\jetX^k(\mcE)$ with respect to the canonical $\germe_X$-module structure. We refer to \cite[Chapter 6]{saunders-89-zbMATH00042050} for a detailed discussion on the properties of jet bundles.

\subsection{Prolongation of morphisms of vector bundles}  
Let $\phi: X \rightarrow Y$ be an isomorphism of complex manifolds. Let $E$ be a vector bundle over $X$, and let $E'$ be a vector bundle over $Y$. Let $\psi: E \rightarrow E'$ be a bundle morphism. 
For every $k\ge 0$, we define the \emph{$k$-th prolongation of } $\psi$ as the bundle morphism $\prolong{\psi}{k}: J^k_X E \rightarrow J^k_Y E'$ satisfying that, for every local section $s: U\rightarrow E$, 
\[
\prolong{\psi}{k} \circ (d^ks) = d^k(s') \circ \phi,
\]
where $s': \phi(U) \rightarrow E'$ is the section of $E'$ such that $\psi \circ s =s'\circ \phi$ (see \cite[Definition 6.2.17]{saunders-89-zbMATH00042050}).

\begin{ex}\label{Ex: example of the first prolongation of the pushfoward}
    Let $\phi: X \rightarrow Y$ be an isomorphism of complex manifolds, and let $d\phi: TX \rightarrow TY$ be the pushfoward of vector fields. Let us describe $\prolong{d\phi}{1}: J_X^1(TX) \rightarrow J_Y^1(TY)$ in local coordinates. Let $\mbf x = (x_1,\ldots,x_n)$ be a system of coordinates for $X$, let $\mbf y = (y_1,\ldots,y_n)$ be a system of coordinates for $Y$, and let $\phi = (\phi_1,\ldots,\phi_n)$ be the description of $\phi$ in these coordinates. Considering the natural coordinate frame $\left\{ \de/\de x_1,\ldots, \de /\de x_n \right\}$ for $TX$, there exists a natural system of coordinates $(\mbf z,\mbf w)= (\{z_j\},\{w_{i_ji_j}\}, 1\le j,j_1,j_2 \le n \})$ for $J_X^1(TX)$ such that
    \[
        z_j\left(d^1\left(\sum_{k=1}^n f_k \cdot \frac{\de}{\de x_k} \right)\right) = f_j,\text{ and }
        w_{j_1j_2}\left(d^1\left(\sum_{k=1}^n f_k \cdot \frac{\de}{\de x_k} \right)\right) = \frac{\de f_{j_2}}{\de x_{j_1}}.
    \]
    (see \cite[Definition 4.1.5]{saunders-89-zbMATH00042050}). Similarly, taking $\left\{ \de/\de y_1,\ldots, \de /\de y_n \right\}$ the natural frame for $TY$, there exists a natural system of coordinates $(\mbf z',\mbf w')$. With respect to the coordinates $(\mbf x, \mbf z, \mbf w)$ and $(\mbf y, \mbf z',\mbf w')$, an straightforward calculation shows that 
    
    \[
        \prolong{(d\phi)}{1}(\mbf x, \mbf z,\mbf w) = (\phi(\mbf x), \ldots ,\prolong{\phi}{1}_j(\mbf x, \mbf z), \ldots, \prolong{\phi}{2}_{j_1j_2}(\mbf x, \mbf z, \mbf w),\ldots),
    \]
    where
    \[
        \phi^{(1)}_j(\mbf x, \mbf z) = \sum_{i=1}^n \frac{ \de \phi_j}{\de x_i}(\mbf x) \cdot z_i, 1\le j \le n,
    \]
    and
    \begin{multline*}
        \phi^{(2)}_{j_1j_2}(\mbf x,\mbf z,\mbf w) = \sum_{i_1,i_2=1}^n \left(\frac{\de (\phi^{-1})_{i_1}}{\de y_{j_1}} \circ \phi(\mbf x) \cdot \frac{\de \phi_{j_2}}{\de x_{i_2}}(\mbf x)\right) \cdot w_{i_1i_2} \\ + \sum_{i=1}^n\left(\sum_{k=1}^n\frac{\de (\phi^{-1})_{k}}{\de y_{j_1}} \circ \phi(\mbf x) \cdot \frac{\de^2 \phi_{j_2}}{\de x_{k}\de x_{i}}(\mbf x)  \right)\cdot z_i, 1\le j_1,j_2\le n.
    \end{multline*}
    These calculations will be useful in the proof of Lemma \ref{L: trivial isotropy}.
\end{ex}

\subsection{Prolongation of foliations}

Let $\F$ be a smooth foliation on a complex manifold $X$. Let $(N_{\F},\nabla_B)$ be the Bott connection on the normal sheaf of the foliation. For every $k\ge 0$, consider the $k$-th sheaf of transverse jets $(\jetXF^k(\nabla_B),\nabla_B^{k})$. We denote the total space of $\jetXF^k(\nabla_B)$ by $\prolong{X_{\F}}{k+1}$, and the foliation on $\prolong{X_{\F}}{k+1}$ induced by $\nabla_B^k$ by $\prolong{\F}{k+1}$. We call the pair $(\prolong{X_{\F}}{k+1}, \prolong{\F}{k+1})$ the $(k+1)$-th prolongation of the foliation. Observe that, in the case of foliation by points, the $(k+1)$-th prolongation corresponds to the $k$-th jet bundle $J^k_X(T_X)$ with its foliation by points. 

\begin{prop}
    Let $\phi: X\rightarrow Y$ be a submersion defining $\F$. Then, there is a natural morphism $\prolong{\phi}{k}: \prolong{X_{\F}}{k} \rightarrow \prolong{Y}{k}$ that is a submersion defining $\prolong{\F}{k}$.
\end{prop}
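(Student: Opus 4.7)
The plan is to show that when $\phi: X \to Y$ is a submersion defining $\F$, all the transverse jet constructions for $\nabla_B$ on $X$ are pullbacks via $\phi$ of the analogous ordinary jet constructions on $Y$, so that the prolongation fits into a cartesian square and the morphism $\prolong{\phi}{k}$ arises from the projection to the second factor.

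First, since $\phi$ defines $\F$, we have $\ker(d\phi) = T_{\F}$, and the induced map $T_X / T_{\F} \to \phi^*T_Y$ is an isomorphism, so $N_{\F} \simeq \phi^*T_Y$. A direct computation (equivalently, Example \ref{Ex: partial connection induced by submersion} applied to $\mcE = T_Y$) shows that under this identification the Bott connection on $N_{\F}$ coincides with the canonical flat partial connection on $\phi^*T_Y$; in particular, using $\germe_{X/\F} \simeq \phi^{-1}\germe_Y$, the sheaf of flat sections $\ker \nabla_B$ is identified with $\phi^{-1}T_Y$.

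Second, I claim a canonical isomorphism of flat partial connections
\[
(\jetXF^{k-1}(\nabla_B),\, \nabla_B^{k-1}) \simeq (\phi^*\jetY^{k-1}(T_Y),\, \text{canonical}),
\]
where the right-hand side carries the canonical flat partial connection attached to a pullback sheaf. By the corollary following Proposition \ref{P: jets are locally free sheaves} (plus Corollary \ref{C: correspondence}), it is enough to produce a natural isomorphism of $\phi^{-1}\germe_Y$-modules $\jetXF^{k-1}(\ker \nabla_B) \simeq \phi^{-1}\jetY^{k-1}(T_Y)$. In foliated coordinates where $\phi$ is the projection $(y_1,\dots,y_q,z_1,\dots,z_d) \mapsto (y_1,\dots,y_q)$, the basis $\mc B^{k-1}_{1,\nabla_B} = \{d^{k-1}(\mbf y^{\mbf i} \cdot e_j)\}$ described after Proposition \ref{P: jets are locally free sheaves} is literally the $\phi$-pullback of the analogous basis for $\jetY^{k-1}(T_Y)$. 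Since both sides are defined intrinsically via the $k$-jet operator $d^{k-1}$ applied to flat sections, the local isomorphisms are canonical and glue to a global one, matching the flat partial connections by construction.

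Third, taking total spaces and using that the pullback of a vector bundle corresponds to a fibre product, the isomorphism above yields
\[
\prolong{X_{\F}}{k} \;=\; E\!\left(\jetXF^{k-1}(\nabla_B)^*\right) \;\simeq\; X \times_Y J^{k-1}_Y(T_Y) \;=\; X \times_Y \prolong{Y}{k}.
\]
Define $\prolong{\phi}{k}$ as the composition of this isomorphism with the projection to $\prolong{Y}{k}$. Being a pullback of the submersion $\phi$ along $\prolong{Y}{k} \to Y$, the map $\prolong{\phi}{k}$ is itself a submersion. Finally, by Example \ref{Ex: foliation + partial connection induced by submersion}, the foliation induced on the total space of a pulled-back flat partial connection is precisely the one defined by the corresponding bundle morphism; applied to $(\jetXF^{k-1}(\nabla_B), \nabla_B^{k-1}) \simeq \phi^*\jetY^{k-1}(T_Y)$, this says that $\prolong{\F}{k}$ is defined by $\prolong{\phi}{k}$.

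The main technical obstacle is the second step: promoting the intuitively clear identification of transverse jets with pullback jets to a canonical isomorphism compatible with both the $\germe_X$-module structure and the flat partial connection. It requires explicit verification in foliated coordinates together with a check that the local identifications do not depend on the chart, after which the remaining steps are immediate applications of the examples and corollaries already established.
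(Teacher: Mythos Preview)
Your argument is correct and follows essentially the same route as the paper: identify $(N_\F,\nabla_B)$ with $(\phi^*T_Y,\nabla_Y)$ via $d\phi$, recognize that the flat sections of the resulting transverse jet sheaf are $\phi^{-1}\jetY^{k-1}(T_Y)$, and then invoke Example \ref{Ex: foliation + partial connection induced by submersion} to conclude that the induced foliation is defined by the bundle map to $\prolong{Y}{k}$. The only organizational difference is that the paper factors the identification through the functoriality step $(\jetXF^{k-1}(\nabla_B),\nabla_B^{k-1})\simeq(\jetXF^{k-1}(\nabla_Y),\nabla_Y^{k-1})$ before observing $\ker\nabla_Y^{k-1}=\phi^{-1}\jetY^{k-1}(T_Y)$, whereas you merge these into a single identification and make the fibre-product description $\prolong{X_\F}{k}\simeq X\times_Y\prolong{Y}{k}$ explicit; this is a matter of presentation rather than substance.
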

\begin{proof}
    Since the kernel of $d\phi: T_X \rightarrow \phi^*T_Y$ is $T_{\F}$, we induce a $\germe_X$-linear isomorphism $d\phi: N_{\F} \rightarrow \phi^*T_Y$. Considering the flat partial connection $\nabla_Y$ on $\phi^*T_Y$ such that $\ker \nabla_Y = \phi^{-1}T_Y$ (see Example \ref{Ex: partial connection induced by submersion}), it is easy to verify using local coordinates that $d\phi: (N_{\F},\nabla_B) \rightarrow (\phi^*T_Y,\nabla_Y)$ is an horizontal isomorphism. This isomorphism induces the horizontal isomorphism $\prolong{d\phi}{k}: (\jetXF^k(\nabla_B),\nabla_B^k) \rightarrow (\jetXF^k(\nabla_Y), \nabla_Y^k)$. Moreover, observe that
    \[
    \ker \nabla_Y^k = \jetXF^k(\ker \nabla_Y) = \phi^{-1} \jetY^k(T_Y),
    \]
    and thus the foliation induced by $\nabla_Y^k$ is the foliation induced by the submersion $E(\jetXF^k(\nabla_Y)) \rightarrow E(\jetY^k(T_Y))$. Therefore, composing with $d\phi: E(\jetXF^k(\nabla_B)) \rightarrow E(\jetXF^k(\nabla_Y))$, we conclude that there exists a natural submersion $\prolong{\phi}{k}: \prolong{X_{\F}}{k} \rightarrow \prolong{Y}{k}$ defining the foliation $\prolong{\F}{k}$. This concludes the proof.
\end{proof}

\subsection{The second prolongation of projective spaces}\label{S: the second prolongation of the projective space}

Let us fix some notation for this section. We denote by $G$ the group $\PSL(n+1,\C)$, corresponding to the automorphism of the projective space $\Pj^n$, and by $\g$ the Lie algebra of $G$. By definition, $G$ acts on $\Pj^n$, and for every $g\in G$, we denote by $L_g: \Pj^n \rightarrow \Pj^n$ the action of $g$ on $\Pj^n$.

For every $g\in G$, the isomorphism $L_g: \Pj^n \rightarrow \Pj^n$ can be prolongated, inducing an isomorphism of vector bundles $\prolong{L_g}{2}: \prolongPjntwo \rightarrow \prolongPjntwo$. Furthermore, since $L_{g_2} \circ L_{g_1} = L_{g_2 \cdot g_1}$, it follows that
\[
 \prolong{L_{g_2}}{2} \circ \prolong{L_{g_1}}{2} = \prolong{L_{g_2\cdot g_1}}{2}, \forall g_1,g_2\in G.
\]
Thus, the action of $G$ on $\Pj^n$ induces an action of $G$ on $\prolongPjntwo$. For every $q\in \prolongPjntwo, g\in G$, we denote  $g \cdot q:=\prolong{L_g}{2}(q)$.

\begin{lemma}\label{L: trivial isotropy}
    Using the notation above, for a generic point $q\in \prolongPjntwo$, the isotropy group $G_q$ is trivial.
\end{lemma}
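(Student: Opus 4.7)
The plan is to reduce to an explicit calculation on the fibre of $\prolongPjntwo = J^1(T\Pj^n)$ over a base point and check that a generic point there has trivial stabilizer. The dimension count is tight: $\dim G = n^2+2n = n(n+2)$ equals $\dim \prolongPjntwo$, since $J^1(T\Pj^n)\to \Pj^n$ has fibre of dimension $n+n^2$. Any $g\in G$ stabilising $q\in \prolongPjntwo$ lying over $p\in \Pj^n$ must fix $p$, so $G_q$ is contained in the parabolic subgroup $P=G_p$; this $P$ has dimension $n^2+n$, exactly the dimension of the fibre $J^1_p(T\Pj^n)$. It therefore suffices to find a point in this fibre with trivial $P$-stabilizer.

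Working in affine coordinates centred at $p$, a generic element of $P$ is represented by a pair $(A,\beta)$ (modulo scaling, normalise $\alpha=1$), acting by $\mathbf{x}\mapsto A\mathbf{x}/(1+\beta^T\mathbf{x})$. Pushing forward a vector field $V(\mathbf{x})=v+B\mathbf{x}+O(|\mathbf{x}|^2)$ through $\phi$ using the formulas from Example \ref{Ex: example of the first prolongation of the pushfoward}, one computes that $P$ acts on the $1$-jet $(v,B)\in T_p\Pj^n\oplus \mathrm{End}(T_p\Pj^n)$ by
\[
(v,B)\longmapsto \bigl(Av,\ ABA^{-1}-v\beta^T A^{-1}-(\beta^T v)\,I\bigr).
\]
Transitivity of $P$ on $T_p\Pj^n\setminus\{0\}$ lets me take $v=e_1$; then $Av=v$ forces $A=\left(\begin{smallmatrix}1 & \mathbf{a}^T \\ 0 & A'\end{smallmatrix}\right)$ with $A'\in \mathrm{GL}(n-1)$. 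Writing $B=\left(\begin{smallmatrix}b & \mathbf{q}^T \\ \mathbf{r} & S\end{smallmatrix}\right)$ in the same block decomposition and rewriting the second fixed-point equation as $[A,B]=v\beta^T+(\beta^T v)A$, its four blocks give
\[
(A'-I)\mathbf{r}=0,\qquad [A',S]=\mathbf{r}\mathbf{a}^T+\beta_1 A',\qquad \mathbf{a}^T\mathbf{r}=2\beta_1,
\]
together with a fourth block equation that merely determines the remaining coordinates of $\beta$ in terms of the preceding data. Taking traces in the middle equation and substituting the third yields the dichotomy $\beta_1(\tr(A')+2)=0$.

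Now fix a $B$ for which $\mathbf{r}$ is a cyclic vector for $S$; this is a Zariski-open condition. If $\beta_1=0$, iterating the commutator identity $[A',S]=\mathbf{r}\mathbf{a}^T$ against the Krylov vectors $\mathbf{r},S\mathbf{r},S^2\mathbf{r},\dots$ shows that $(A'-I)S^k\mathbf{r}$ lies in $\mathrm{span}(\mathbf{r},\dots,S^{k-1}\mathbf{r})$. The rank-one relation $[A',S]=\mathbf{r}\mathbf{a}^T$, combined with the fact that for generic $S$ the image of $\mathrm{ad}_S$ consists of matrices whose diagonal vanishes in $S$'s eigenbasis, forces $\mathbf{a}=0$ for generic $\mathbf{r}$; then $A'\in \C[S]$ and $A'\mathbf{r}=\mathbf{r}$ together give $A'=I$. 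If instead $\beta_1\neq 0$ and $\tr(A')=-2$, the analogous iteration (now carrying an extra $\beta_1 A'$ term) still sends each $S^k\mathbf{r}$ into the span of strictly earlier Krylov terms, so $A'-I$ is strictly lower-triangular in the basis $\{S^k\mathbf{r}\}_{k=0}^{n-2}$, hence nilpotent. But then $\tr(A')=n-1$, incompatible with $\tr(A')=-2$. In both branches $A=I$ and $\beta=0$, so $G_q$ is trivial for a generic $q$.

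The most delicate step is the Krylov-type analysis in the $\beta_1\neq 0$ branch: the extra $\beta_1 A'$ term threatens to inject $e_k$-components into $(A'-I)e_k$ and destroy the strictly lower-triangular structure, and verifying that this structure nevertheless survives is what allows one to rule out finite stabilizers rather than merely show that the infinitesimal stabilizer is trivial.
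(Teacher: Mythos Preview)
Your argument is correct and genuinely different from the paper's. The paper never tries to solve the stabilizer equations directly; instead it packages them into an incidence variety $\Gamma\subset (G_p\setminus\{e\})\times(\text{fibre})$, computes $\dim\Gamma=n^2+2n-3$ by projecting to $(A,B,Z)$ and diagonalising $A$, then shows every nonempty fibre of the other projection $\pi_2$ has dimension at least $n-1$, which forces $\pi_2$ not to be surjective. Your approach bypasses all of this: you pick a generic jet (namely one with $\mathbf r$ cyclic for $S$ and $S$ regular semisimple), write out the block commutator equation $[A,B]=v\beta^T+(\beta^T v)A$, and kill the stabilizer by a Krylov iteration plus the trace dichotomy $\beta_1(\tr A'+2)=0$. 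The Krylov step in the $\beta_1\neq 0$ branch does work exactly as you claim: the extra $\beta_1 A'S^k\mathbf r$ term lands in $\mathrm{span}(\mathbf r,\dots,S^k\mathbf r)$ by the inductive hypothesis, so $A'-I$ is strictly lower-triangular in that basis, hence $\tr(A')=n-1\neq -2$. One cosmetic slip: in your displayed action formula the middle term should be $Av\,\beta^T A^{-1}$ rather than $v\,\beta^T A^{-1}$ (compare the paper's Example~\ref{Ex: example of the first prolongation of the pushfoward}); this is irrelevant once you impose $Av=v$, so the block equations you derive are unaffected. Your method has the advantage of being constructive (it exhibits explicit open conditions on $q$ guaranteeing $G_q=\{e\}$) and avoids any appeal to the theorem on dimension of fibres; the paper's method has the advantage of being more uniform and less dependent on clever choices of basis.
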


The proof of Lemma \ref{L: trivial isotropy} is the content of the Subsection \ref{S: proof of the action}. Let us denote by $G^0\subset \prolongPjntwo$ the open subset corresponding to the points $q\in \prolongPjntwo$ such that $G_q$ is trivial. 

\begin{prop}\label{P: birational morphism} For every $q\in G^0$, the map
\begin{align*}
    \phi_q: G & \rightarrow \prolongPjntwo \\
    g & \mapsto g \cdot q
\end{align*}
is a birational morphism. Additionally, for every $q\in G^0$, $G \cdot q = G^0$.
\end{prop}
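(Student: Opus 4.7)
The plan is to show that $\phi_q$ is injective and étale of source and target of the same dimension, hence an open immersion, and then use connectedness of $G^0$ to conclude $G\cdot q = G^0$.

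First I would check the dimensions match. Since $\prolong{(\Pj^n)}{2}$ is (by the remark before Proposition \ref{P: birational morphism}) the total space of $J^1_{\Pj^n}(T_{\Pj^n})$, it is a vector bundle of rank $n+n^2$ over $\Pj^n$, so $\dim \prolongPjntwo = n + n + n^2 = n(n+2) = (n+1)^2 - 1 = \dim G$. This is the dimensional input that will let étaleness follow from injectivity of differentials.

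Next I would analyze $\phi_q$ fiberwise. For any $p = g_0 \cdot q$ in the image, the fiber is $\phi_q^{-1}(p) = g_0 \cdot G_q$, and by hypothesis $q\in G^0$ means $G_q = \{e\}$, so $\phi_q$ is injective on the whole of $G$. For the differential, at the identity the kernel of $d\phi_q(e)\colon \g \to T_q \prolongPjntwo$ is the Lie algebra of $G_q$, which is trivial; since both spaces have the same dimension, $d\phi_q(e)$ is an isomorphism. Because $\phi_q\circ L_{g_0} = L_{g_0}\circ \phi_q$ and left translation is an automorphism on both sides, the differential is an isomorphism at every point of $G$. Hence $\phi_q$ is étale everywhere, and combined with injectivity it is an open immersion. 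In particular $G\cdot q = \phi_q(G)$ is open in $\prolongPjntwo$.

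Finally I would identify this open orbit with $G^0$. The inclusion $G\cdot q \subset G^0$ is formal: if $p = g\cdot q$ then $G_p = g G_q g^{-1} = \{e\}$. For the reverse inclusion, I observe that $\prolongPjntwo$ is irreducible, so the open subset $G^0$ is connected. The $G$-orbits in $G^0$ are mutually disjoint and, by the argument above, each is open in $\prolongPjntwo$ and hence in $G^0$. A connected space cannot be written as a disjoint union of nonempty open subsets unless there is only one such subset, so there is a single orbit and $G\cdot q = G^0$.

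I do not expect a serious obstacle: the only non-routine ingredient is Lemma \ref{L: trivial isotropy}, which is assumed, and the dimension count. The main point to double-check is that left translation genuinely intertwines $\phi_q$ with the $G$-action on $\prolongPjntwo$ (so that étaleness at $e$ globalizes), which is immediate from the definition of the prolonged action $g\cdot q := \prolong{L_g}{2}(q)$ together with $\prolong{L_{g_1 g_2}}{2} = \prolong{L_{g_1}}{2}\circ \prolong{L_{g_2}}{2}$.
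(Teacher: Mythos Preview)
Your argument is correct and in fact proves slightly more than the paper does: you obtain that $\phi_q$ is an open immersion, whereas the paper only records that it is a birational regular morphism. The routes differ in emphasis. The paper invokes the standard fact that orbits of algebraic group actions are locally closed (citing Humphreys), observes that $\dim G\cdot q=\dim G=\dim\prolongPjntwo$ forces the orbit to contain a Zariski open set, and concludes birationality from dominance plus injectivity. You instead verify \'etaleness directly by computing $\ker d\phi_q(e)=\operatorname{Lie}(G_q)=0$ and propagating via equivariance, then use injective plus \'etale to get an open immersion. Your approach is a bit more self-contained (no black-box orbit theory) and gives a sharper conclusion; the paper's is shorter once the orbit machinery is granted. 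For the identification $G\cdot q=G^0$, both arguments are essentially the same: the paper notes that two dense open orbits in an irreducible variety must meet, while you phrase it as a connectedness/partition argument on $G^0$; these are equivalent.
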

\begin{proof}
    For every $q\in \prolongPjntwo$, the orbit $G \cdot q \subset \prolongPjntwo$ is a subvariety of $\prolongPjntwo$ (see \cite[Proposition 8.3]{humphreys-zbMATH03508744}). By Lemma \ref{L: trivial isotropy}, for a generic point $q\in \prolongPjntwo$, the isotropy group $G_q$ is trivial, and thus $\dim G \cdot q = \dim G = \dim \prolongPjntwo$. Therefore, $G \cdot q$ contains a Zariski open subset of $\prolongPjntwo$, that is, $\phi_q$ is dominant. Finally, since $\phi_q$ is injective, we conclude that $\phi_q$ is a birational regular morphism.  

    For the second claim, since $\prolongPjntwo$ is irreducible, it follows that $G\cdot q \cap G\cdot q'\neq \emptyset$ for every pair of points $q,q'\in G^0$. Thus, the orbits are the same, and therefore they must be $G^0$.
\end{proof}

Let $\Omega_G: T_G \rightarrow \g \otimes \germe_G$ be the Maurer-Cartan form on $G$ invariant by the left multiplication of $G$ (see \cite[Chapter 3, Definition 1.3]{sharpe-97-zbMATH00914851}). For every $q\in G^0$, the birational map $\phi_q^{-1}:\prolongPjntwo \dashrightarrow G$ induces a rational $\g$-value 1-form 
\begin{equation*}
    \Omega_q: T_{\prolongPjntwo} \rightarrow \g \otimes \germe_{\prolongPjntwo}(D_q),
\end{equation*}
for some effective divisor $D_q$ in $\prolongPjntwo$, transverse to the fibers of the fibration $\prolongPjntwo\rightarrow \Pj^n$. Observe that $\Omega_q$ is invariant by the action of $G$ on $\prolongPjntwo$. Indeed, since the diagram
\begin{equation*}
    \begin{tikzpicture}
        \matrix(m)[matrix of math nodes, column sep = 2em, row sep = 2em]
        {
        G & \prolongPjntwo \\
        G & \prolongPjntwo \\
        };
        \path[->]
        (m-1-1) edge node[above]{$\phi_q$} (m-1-2) edge node[left]{$L_g$} (m-2-1)
        (m-1-2) edge node[right]{$\prolong{L_g}{2}$} (m-2-2)
        (m-2-1) edge node[above]{$\phi_q$} (m-2-2)
        ;
    \end{tikzpicture}
\end{equation*}
commutes, it follows that
\begin{equation}\label{Eq: psl structure is invariant by the action}
    (\prolong{L_g}{2})^*(\Omega_q) = (\prolong{L_g}{2})^* \circ (\phi_q^{-1})^*\Omega_G = (\phi_q^{-1})^* \circ (L_g)^*\Omega_G = (\phi_q^{-1})^*\Omega_G = \Omega_q.
\end{equation}
Hence, for every $q\in G^0$, we have defined a $\g$-valued 1-form invariant by the action of $G$. Despite the collection of 1-forms $\Omega_q$ depends on $q$, they are all related by the following property.

\begin{prop}\label{P: rational g valued 1-form depends on q}
    Let $q,q' \in G^0$, and let $g \in G$ such that $q' = g \cdot q$. Let $\Ad: G \rightarrow \Aut(\g)$ be the adjoint representation of the group $G$ on the Lie algebra $\g$. Then,
    \begin{equation*}
        \Omega_{q'} = \Ad(g) \circ \Omega_{q}.
    \end{equation*}
\end{prop}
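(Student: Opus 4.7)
The plan is to reduce the identity to the standard transformation law of the left-invariant Maurer-Cartan form under right multiplication on $G$.

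First, I would analyse the relation between the birational parametrisations $\phi_q$ and $\phi_{q'}$. Since $q' = g\cdot q$, for every $h\in G$ we have
\[
\phi_{q'}(h) = h\cdot q' = h\cdot (g\cdot q) = (hg)\cdot q = \phi_q(R_g(h)),
\]
where $R_g : G \to G$ denotes right multiplication by $g$. Hence $\phi_{q'} = \phi_q \circ R_g$ as rational maps, and consequently $\phi_{q'}^{-1} = R_{g^{-1}} \circ \phi_q^{-1}$. Pulling back the Maurer-Cartan form yields
\[
\Omega_{q'} = (\phi_{q'}^{-1})^*\Omega_G = (\phi_q^{-1})^*\bigl(R_{g^{-1}}^*\Omega_G\bigr).
\]

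The key step is then the classical identity
\[
R_g^*\Omega_G = \Ad(g^{-1})\circ \Omega_G,
\]
valid for the left-invariant Maurer-Cartan form (see for example \cite[Chapter 3]{sharpe-97-zbMATH00914851}). This follows from the fact that $L$ and $R$ commute on $G$ together with the equality $\Ad(g) = dL_g\circ dR_{g^{-1}}|_e$. Substituting $g$ by $g^{-1}$ we obtain $R_{g^{-1}}^*\Omega_G = \Ad(g)\circ \Omega_G$, and therefore
\[
\Omega_{q'} = (\phi_q^{-1})^*(\Ad(g)\circ \Omega_G) = \Ad(g)\circ (\phi_q^{-1})^*\Omega_G = \Ad(g)\circ \Omega_q,
\]
which is exactly the claimed identity.

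There is essentially no obstacle beyond bookkeeping: the only subtle point is checking the direction of the conventions (whether $\Omega_G$ is the left- or right-invariant Maurer-Cartan form, and whether $\Ad(g)$ or $\Ad(g^{-1})$ appears in the right-translation formula). Once the conventions are fixed consistently with the reference \cite{sharpe-97-zbMATH00914851} already used in the paper, the computation is three lines as above, and everything is well-defined over the open subset where $\phi_q$ and $\phi_{q'}$ are biregular, hence as rational $\g$-valued $1$-forms on $\prolongPjntwo$.
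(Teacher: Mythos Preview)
Your proof is correct and follows essentially the same route as the paper: establish $\phi_{q'} = \phi_q \circ R_g$, invert to get $\phi_{q'}^{-1} = R_{g^{-1}}\circ \phi_q^{-1}$, and then apply the right-translation law $R_{g^{-1}}^*\Omega_G = \Ad(g)\circ \Omega_G$ for the left-invariant Maurer--Cartan form. If anything, you give a bit more detail than the paper does in justifying the relation $\phi_{q'} = \phi_q\circ R_g$ and in citing the transformation law explicitly.
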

\begin{proof}
    Since $\phi_{q'} = \phi_q \circ R_g$, we have that
    \begin{align*}
        \Omega_{q'} & = (\phi_{q'}^{-1})^*(\Omega_G) = (R_{g^{-1}} \circ \phi_q^{-1})^*(\Omega_G) = (\phi_q^{-1})^*(\Ad(g) \circ \Omega_G) \\
        & = \Ad(g) \circ (\phi_q^{-1})^*(\Omega_G) = \Ad(g) \circ \Omega_q,
    \end{align*}
    and this concludes the proof.
\end{proof}

This proposition has two immediate consequences. The first one is that the polar divisor $D_q$ of $\Omega_q$ does not depend on $q\in G^0$. The second one is that, for every $q\in G^0$, the $\g$-valued form $\Omega_q$ defines the same rational $G$-structure on $\prolongPjntwo$.

\begin{prop}\label{P: psl structure of the prolongation of the projective space, restriction to the relative tangent bundle}
    Let $q\in G^0$, and let $p=\pi(q) \in \Pj^n$ the projection of $q$ to $\Pj^n$. Let $\h_p$ be the subalgebra of $\g$ corresponding to the isotropy group $G_p \subset G$. Let $\Omega_q: T_{\prolongPjntwo} \rightarrow \g \otimes \germe_{\prolongPjntwo}(D_q)$ be the rational $\g$-valued 1-form induced by the birational morphism $\phi_q: G \rightarrow G^0$. Then, the restriction of $\Omega_q$ to the relative tangent bundle $T_{\prolongPjntwo/\Pj^n} \subset T_{\prolongPjntwo}$ factors through the inclusion $\h_p \subset \g$, that is, it induces the morphism
        \begin{equation}\label{Eq: restriction to the relative tangent bundle}
            \Omega_q: T_{\prolongPjntwo/\Pj^n} \rightarrow \h_p \otimes \germe_{\prolongPjntwo}(D_q).
        \end{equation}
\end{prop}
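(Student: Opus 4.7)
The plan is to transport the statement across the birational morphism $\phi_q: G \to \prolongPjntwo$ and use the defining property of the Maurer--Cartan form. Because $\Omega_q = (\phi_q^{-1})^*\Omega_G$ and the claimed factorization amounts to an inclusion into the subsheaf $\h_p \otimes \germe_{\prolongPjntwo}(D_q) \subset \g \otimes \germe_{\prolongPjntwo}(D_q)$, it suffices to check the factorization on the dense open subset of $\prolongPjntwo$ where $\phi_q$ is a local biholomorphism; the extension to a global sheaf factorization is then automatic, since meromorphic function coefficients vanishing on a dense open subset vanish identically.

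The key observation is that the composition $\pi \circ \phi_q: G \to \Pj^n$ is just $g \mapsto g \cdot p$, by $G$-equivariance of $\pi$ and the formula $\phi_q(g) = g \cdot q$. Its fibers are therefore the left cosets of the isotropy subgroup $G_p$, and the tangent space at $g_0$ to the fiber through $g_0$ is the left-translate $(L_{g_0})_* \h_p \subset T_{g_0} G$. Via $d\phi_q$, this subspace is identified with the vertical tangent space $(T_{\prolongPjntwo/\Pj^n})_{g_0 \cdot q}$. Since the Maurer--Cartan form acts as $\Omega_G|_{g_0} = (L_{g_0^{-1}})_*$, it sends $(L_{g_0})_*\h_p$ onto $\h_p$. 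Pulling back along $\phi_q^{-1}$ then yields the desired factorization of $\Omega_q$ restricted to $T_{\prolongPjntwo/\Pj^n}$ through $\h_p$ on the dense open subset, completing the proof after the extension argument mentioned above.

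I expect the main technical point to be the verification that $d\phi_q$ identifies $\phi_q^*(T_{\prolongPjntwo/\Pj^n})$ with $\ker d(\pi\circ \phi_q)$ on the locus where $\phi_q$ is a local biholomorphism. Conceptually this follows from the equality $\pi \circ \phi_q = (g \mapsto g \cdot p)$ together with functoriality of relative tangent sheaves under compositions of submersions; but if a direct sheaf-theoretic argument proves awkward, one can instead check the identification pointwise using the explicit formulas for $\prolong{L_g}{2}$ computed in Example \ref{Ex: example of the first prolongation of the pushfoward}.
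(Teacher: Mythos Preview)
Your proposal is correct and follows essentially the same approach as the paper's proof. Both arguments rest on the observation that $\phi_q$ intertwines the fibration $G \to \Pj^n$, $g \mapsto g\cdot p$, with the fibration $\prolongPjntwo \to \Pj^n$, and then use that the Maurer--Cartan form restricted to the relative tangent bundle $T_{G/\Pj^n}$ lands in $\h_p$; the paper cites Sharpe's description of the tangent bundle of a Klein geometry for this last fact, while you compute it directly from $\Omega_G|_{g_0} = (L_{g_0^{-1}})_*$, but the substance is identical.
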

\begin{proof}
    Since the map $\phi_q$ respects the fibrations $G\rightarrow \Pj^n$ and $G^0 \rightarrow \Pj^n$, the morphism $d\phi_q: T_{G^0} \rightarrow T_G$ induces $d\phi_q: T_{G^0/\Pj^n} \rightarrow T_{G/\Pj^n}$. Using the commutative diagram of the tangent bundle of a Klein geometry (see \cite[Chapter 4, Section 5]{sharpe-97-zbMATH00914851}), the restriction of the Maurer-Cartan form $\Omega_G$ to $\Omega_{G/\Pj^n}$ factors through the inclusion $\h_p \subset \g$. Therefore, the restriction of $\Omega_q$ to $T_{G^0/\Pj^n}$ also factors through the inclusion $\h_p \subset \g$. This concludes the proof. 
\end{proof}

\begin{rmk}
    By Equation (\ref{Eq: restriction to the relative tangent bundle}), the restriction of $\Omega_q$ to the relative tangent bundle depends on the point $p=\pi(q)\in \Pj^n$. Furthermore, it also depends on $q\in G^0$. Indeed, let $q,q' \in G^0$ such that $p=\pi(q)=\pi(q') \in \Pj^n$, and let $h\in G_p$ such that $q' = h \cdot q$. By Proposition \ref{P: rational g valued 1-form depends on q}, $\Omega_q' = \Ad(h) \circ \Omega_q$ and thus we have the commutative diagram:
    \begin{equation*}
        \begin{tikzpicture}
            \matrix(m)[matrix of math nodes, column sep = 2em, row sep = 2em]
            {
            T_{\prolongPjntwo/\Pj^n} & \h_p \otimes \germe_{\prolongPjntwo}(D_q) \\
             & \h_p \otimes \germe_{\prolongPjntwo}(D_q) \\
            };
            \path[->]
            (m-1-1) edge node[above]{$\Omega_q$} (m-1-2) edge node[below]{$\Omega_{q'}$} (m-2-2)
            (m-1-2) edge node[right]{$\Ad(h)$} (m-2-2)
            ;
        \end{tikzpicture}
    \end{equation*}
    where now $\Ad(h): \h_p \rightarrow \h_p$ stands for the adjoint action of $h\in G_p$ on the Lie algebra $\h_p$. 
\end{rmk}

\subsection{Prolongation of transversely projective structures}\label{Subsection: Prolongation of transversely projective structures}

Let $\F$ be a smooth codimension $q$ foliation on a complex manifold $X$, and suppose $\F$ admits a smooth transversely projective structure $\mc P$. In this section, we will use the construction of the $\PSL(q+1,\C)$-structure of $\prolongPjqtwo$ to construct a natural $\PSL(q+1,\C)$-structure for the foliation $\prolong{\F}{2}$ on $\prolong{X_{\F}}{2}$. We will assume the same notations we established in Section \ref{S: the second prolongation of the projective space}.

\subsubsection*{Prolongation of the foliated atlas}

Let $\mc P = \{\phi_i: U_i \rightarrow \Pj^q \}$ be a smooth transversely projective structure for the foliation $\F$. For every pair $(i,j)$ such that $U_i\cap U_j\neq \emptyset$, let $g_{ij} \in G$ such that $\phi_i = L_{g_{ij}} \circ \phi_j $ on $U_i\cap U_j$.

For each $i$, the prolongation of $\phi_i$ is a smooth submersion $\prolong{\phi_i}{2}: \prolong{(U_i)_{\F}}{2} \rightarrow \prolongPjqtwo$ that defines the foliation $\prolong{\F}{2}$. Moreover, for every pair $(i,j)$ such that $U_i\cap U_j\neq \emptyset$, considering the prolongations, we have that
\[
 \prolong{\phi_i}{2} = \prolong{L_{g_{ij}}}{2} \circ \prolong{\phi_j}{2}
\]
on $\prolong{(U_i)_{\F}}{2} \cap \prolong{(U_j)_{\F}}{2}$. Hence, starting with $\mc P$, we defined a family of smooth submersions $\prolong{\mc P}{2} = \{\prolong{\phi_i}{2}: \prolong{(U_i)_{\F}}{2} \rightarrow \prolongPjqtwo \}$  defining $\F$ and such that the change of coordinates are the action of $G$ on $\prolongPjqtwo$.

\subsubsection*{The transversely $\PSL(q+1,\C)$-structure of $\prolong{\F}{2}$}

Let us fix a point $q\in G^0\subset \prolongPjqtwo$,  and let
\[
    \Omega_q: T_{\prolongPjqtwo} \rightarrow \g \otimes \germe_{\prolongPjqtwo}(D_q)
\]
be the $\PSL(q+1,\C)$-structure of $\prolongPjqtwo$ we defined in Subsection \ref{S: the second prolongation of the projective space}. For every $\phi_i: U_i \rightarrow \Pj^q$ smooth submersion of the chart $\mc P$, we consider the rational $\g$-valued 1-form 
\[
\left(\prolong{\phi_i}{2}\right)^*(\Omega_q): T_{\prolong{(U_i)_{\F}}{2}} \rightarrow \g \otimes \germe_{\prolong{(U_i)_{\F}}{2}}((\prolong{\phi_i}{2})^*D_q) 
\]
For every pair  $(i,j)$ such that $U_i\cap U_j \neq \emptyset$, we have that
\begin{align*}
    \left(\prolong{\phi_i}{2}\right)^*(\Omega_q)  & = \left(\prolong{\phi_j}{2}\right)^* \circ \left(\prolong{L_{g_{ij}}}{2}\right)^*(\Omega_q)  \\
    & = \left(\prolong{\phi_j}{2}\right)^*(\Omega_q),
\end{align*}
because, by Equation (\ref{Eq: psl structure is invariant by the action}), $\Omega_q$ is invariant by the action of $G$ . Therefore, from the transversely projective structure $\mc P$ we construct a flat $\g$-valued 1-form
\begin{equation}
    \prolong{\Omega_{\mc P,q}}{2}: T_{\prolong{X}{2}} \rightarrow \g \otimes \germe_{\prolong{X}{2}}(D).
\end{equation}

\begin{rmk} The $\g$-valued 1-form $\prolong{\Omega_{\mc P,q}}{2}$ depends on the point $q\in G^0$ we chose in the start of the construction. Nevertheless, the transverse $\PSL(q+1,\C)$-structures we obtain are equivalent. Indeed, let $q'\in G^0$ be a different point, and let $g\in G$ such that $q' = g \cdot q$. By Proposition \ref{P: rational g valued 1-form depends on q}, we have that
\[
    \prolong{\Omega_{\mc P,q'}}{2} = \Ad(g) \circ \prolong{\Omega_{\mc P,q}}{2},
\]
and therefore both $\prolong{\Omega_{\mc P,q}}{2}$ and $\prolong{\Omega_{\mc P,q'}}{2}$ defines the same singular transverse $\PSL(q+1,\C)$-structure for the foliation $\prolong{\F}{2}$.
\end{rmk}

By the remark above, from now one, we can omit $q\in G^0$ and denote $\prolong{\Omega_{\mc P,q}}{2}$ just by $\prolong{\Omega_{\mc P}}{2}$. We will call it the \emph{prolongation of the transversely projective structure $\mc P$}.

\begin{lemma}\label{L: psl structure of the prolongation} Let $\F$ be a codimension $q$ smooth foliation on a complex manifold $X$. Let $\mc P$ be a transversely projective structure for $\mc P$, and let $\prolong{\Omega_{\mc P}}{2}$ be the prolongation of $\mc P$. Then:
\begin{enumerate}[label = (\roman*)]
    \item\label{I: psl structure of the prolongation restricted to the relative tangent bundle} The poles $D$ of $\prolong{\Omega_{\mc P}}{2}$ are transverse to the fibration $\pi: \prolong{X_{\F}}{2}\rightarrow X$, and 
    \[
    \prolong{\Omega_{\mc P}}{2}\left(T_{\prolong{X_{\F}}{2}/X} \right) \subset \h \otimes \germe_X(D),
    \]
    where $\h \subset \psl(q+1,\C)$ is the Lie algebra of  $H=G_p \subset \PSL(q+1,\C)$, the isotropy subgroup of some $p\in \Pj^q$;
    \item\label{I: primitives of the psl structure of the prolongation} every primitive $\Phi: U \rightarrow \PSL(q+1,\C)$ respects the fibrations $U\rightarrow X$ and $\PSL(q+1,\C) \rightarrow \Pj^q$, and the induced map $\phi: \pi(U) \rightarrow \Pj^q$ belongs to the projective atlas $\mc P$.
\end{enumerate}
\end{lemma}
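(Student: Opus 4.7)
The plan is to reduce both parts of the lemma to the corresponding statements about the model $(\prolongPjqtwo, \Omega_q)$ developed in Subsection \ref{S: the second prolongation of the projective space}, exploiting the local identity $\prolong{\Omega_{\mc P}}{2} = (\prolong{\phi_i}{2})^*\Omega_q$ on $\prolong{(U_i)_\F}{2}$ and the fact that $\prolong{\phi_i}{2}$ is a submersion that intertwines the fibrations $\prolong{(U_i)_\F}{2}\to U_i$ and $\prolongPjqtwo \to \Pj^q$ via the submersion $\phi_i: U_i \to \Pj^q$.

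For \textbf{(i)}, the pole divisor of $\prolong{\Omega_{\mc P}}{2}$ on $\prolong{(U_i)_\F}{2}$ equals $(\prolong{\phi_i}{2})^{-1}(D_q)$; since $D_q$ is transverse to $\prolongPjqtwo \to \Pj^q$ and the commuting square of fibrations forces $\prolong{\phi_i}{2}$ to restrict to a submersion between matching fibers, transversality is preserved under pullback. For the second assertion of (i), the same compatibility of fibrations yields that $d\prolong{\phi_i}{2}$ maps $T_{\prolong{(U_i)_\F}{2}/U_i}$ into $T_{\prolongPjqtwo/\Pj^q}$, and Proposition \ref{P: psl structure of the prolongation of the projective space, restriction to the relative tangent bundle} then gives
\[
    \prolong{\Omega_{\mc P}}{2}\bigl(T_{\prolong{(U_i)_\F}{2}/U_i}\bigr) = (\prolong{\phi_i}{2})^*\Omega_q\bigl(T_{\prolong{(U_i)_\F}{2}/U_i}\bigr) \subset \h \otimes \germe(D)
\]
at once, which patches to the global claim.

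For \textbf{(ii)}, let $\Phi:U\to G$ be a primitive, so that $\Phi^*\Omega_G = \prolong{\Omega_{\mc P}}{2}$. For $v \in T_{U/X}$ one has $\Omega_G(d\Phi(v)) = \prolong{\Omega_{\mc P}}{2}(v) \in \h$ by part (i); since the left-invariant Maurer--Cartan form trivializes $T_G$ and identifies $T_{G/\Pj^q}$ with $\h$ fiberwise (this is precisely the Klein-geometry dictionary), this forces $d\Phi(T_{U/X}) \subset T_{G/\Pj^q}$. The fibers of $\pi:\prolong{X_\F}{2} \to X$ are vector spaces, hence connected, so the infinitesimal condition integrates to $\Phi$ mapping fibers into fibers; thus $\Phi$ descends to a map $\phi:\pi(U)\to \Pj^q$. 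To identify $\phi$ with a member of $\mc P$, I will introduce the canonical local primitive $\Phi_i := \phi_q^{-1}\circ \prolong{\phi_i}{2}$, defined on the open locus where $\phi_q^{-1}$ is regular, which is indeed a primitive because $\Omega_q = (\phi_q^{-1})^*\Omega_G$, and whose descent equals $\phi_i \in \mc P$ thanks to the equivariance $\pi(g\cdot q) = g\cdot \pi(q)$. Given the arbitrary primitive $\Phi$, after shrinking $U$ so that $\pi(U) \subset U_i$ for some $i$, the uniqueness of primitives up to left $G$-translation recorded in the subsection on primitives yields $g \in G$ with $\Phi = L_g \circ \Phi_i$, so $\phi = L_g \circ \phi_i$ belongs to the (maximal) atlas $\mc P$. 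The main technical subtlety is the identification $\Omega_G^{-1}(\h) = T_{G/\Pj^q}$ and the comparison with the canonical primitive; both are standard once phrased in the Klein-geometry language, and everything else is bookkeeping with the commuting square of fibrations.
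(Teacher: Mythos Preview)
Your proof is correct and follows essentially the same approach as the paper's: both parts are reduced to the model $(\prolongPjqtwo,\Omega_q)$ via the local identity $\prolong{\Omega_{\mc P}}{2}=(\prolong{\phi_i}{2})^*\Omega_q$, item (i) is obtained from Proposition \ref{P: psl structure of the prolongation of the projective space, restriction to the relative tangent bundle}, and item (ii) is proved by introducing the canonical primitives $\Phi_i=\phi_q^{-1}\circ\prolong{\phi_i}{2}$ and invoking uniqueness of primitives up to left $G$-translation. Your write-up is in fact more explicit than the paper's on one point: you justify that an arbitrary primitive $\Phi$ respects the fibration by the infinitesimal argument $\Omega_G(d\Phi(v))\in\h\Rightarrow d\Phi(v)\in T_{G/\Pj^q}$ coming from part (i) and the Klein-geometry identification, whereas the paper simply asserts this after displaying the commutative diagram for the canonical primitives.
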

\begin{proof} First, remark that Item \ref{I: psl structure of the prolongation restricted to the relative tangent bundle} is a direct consequence of Proposition \ref{P: psl structure of the prolongation of the projective space, restriction to the relative tangent bundle}. Let us prove Item \ref{I: primitives of the psl structure of the prolongation}. Considering the notation above, observe that by definition $\phi_q^{-1} \circ \prolong{\mc P}{2} = \{\phi_q^{-1} \circ \prolong{\phi}{2}: \prolong{(U_i)_{\F}}{2} \dashrightarrow \prolong{(\Pj^q)}{2} \}$ are primitives of $\prolong{\Omega_{\mc P}}{2}$, and these primitives satisfies the following commutative diagram: 
    \begin{equation}\label{D: primitives of the psl structure}
        \begin{tikzpicture}
            \matrix(m)[matrix of math nodes, column sep = 2em, row sep = 2em]
            {
            \prolong{(U_i)_{\F}}{2} & \prolongPjqtwo & \PSL(q+1,\C) \\
            U_i & \Pj^q \\
            };
            \path[->]
            (m-1-1) edge node[above]{$\prolong{\phi_i}{2}$} (m-1-2) edge (m-2-1)
            (m-1-2) edge (m-2-2)
            (m-1-3) edge (m-2-2)
            (m-2-1) edge node[above]{$\phi_i$} (m-2-2)
            ;
            \path[dashed,->]
            (m-1-2) edge node[above]{$\phi_q$} (m-1-3)
            ;
        \end{tikzpicture}
    \end{equation}
Let $\Phi: U \rightarrow \PSL(q+1,\C)$ be any primitive of $\PSL(q+1,\C)$. Shrinking $U$ if necessary, we suppose that $U\subset \prolong{(U_i)_{\F}}{2}$ for some $i\in I$. Then, $\Phi$ respects the fibration and, by Diagram (\ref{D: primitives of the psl structure}), it follows that the induced map $\phi: \pi(U) \subset U_i \rightarrow \Pj^q$ coincides with $\phi_i$. This concludes the proof.
\end{proof}

\begin{rmk} Item \ref{I: psl structure of the prolongation restricted to the relative tangent bundle} alone is enough to conclude that every primitive $\Phi: U \rightarrow \PSL(q+1,\C)$ respects the fibrations, and that the set of induced maps $\phi: \pi(U) \rightarrow \Pj^q$ defines a transversely projective structure $\mc P'$ for $\F$. Hence, Lemma \ref{L: psl structure of the prolongation} is saying that this transversely projective structure $\mc P'$ is equivalent to the original transversely projective structure $\mc P$.  
\end{rmk}

\begin{thm}\label{T: prolongation of transversely projective structure}
    Let $\F$ be a codimension $q$ smooth foliation on complex manifold $X$. Let $\mc P$ be a transversely projective structure for $\F$, and let $\prolong{\Omega_{\mc P}}{2}$ be the prolongation of $\mc P$. Then, for every meromorphic section $\sigma: X \dashrightarrow \prolong{X_{\F}}{2}$ transverse to $\left(\prolong{\Omega_{\mc P}}{2}\right)_{\infty}$, the pullback $\sigma^*\prolong{\Omega_{\mc P}}{2}$ defines a singular transversely projective structure for $\F$ compatible with $\mc P$. 
\end{thm}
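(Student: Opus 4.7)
The plan is to verify that $\sigma^*\prolong{\Omega_{\mc P}}{2}$ satisfies the three conditions in the definition of a singular transversely $\PSL(q+1,\C)/H$-structure with $H=G_p$, and then check compatibility with $\mc P$. Using the open covering $\mc U = \{X\}$ and a single global $\g$-valued $1$-form, the cocycle condition is automatic, so only conditions (i) and (ii) require work.

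First, I would set up the pullback. Since $\sigma:X\dashrightarrow \prolong{X_{\F}}{2}$ is meromorphic and transverse to the polar divisor $D=(\prolong{\Omega_{\mc P}}{2})_{\infty}$, the pullback $\sigma^*\prolong{\Omega_{\mc P}}{2}$ is a well defined meromorphic $\g$-valued $1$-form on $X$ with poles contained in an effective divisor $D'\subset X$ built from $\sigma^{-1}(D)$ together with the indeterminacy locus of $\sigma$. Flatness is then automatic: $\prolong{\Omega_{\mc P}}{2}$ satisfies the Maurer-Cartan equation on $\prolong{X_{\F}}{2}$ (being locally equal to $\Phi^*\Omega_G$ for primitives $\Phi$), and since pullback commutes with $d$ and with the bracket, $d(\sigma^*\prolong{\Omega_{\mc P}}{2})+\tfrac12[\sigma^*\prolong{\Omega_{\mc P}}{2},\sigma^*\prolong{\Omega_{\mc P}}{2}]=0$.

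The main step is identifying the kernel of $\sigma^*\prolong{\Omega_{\mc P}}{2}$ modulo $\h$ with $T_{\F}$. I would argue locally. Fix a point $x_0\in X$ outside $D'\cup \mathrm{ind}(\sigma)$ and choose a simply connected neighborhood $V\subset \prolong{X_{\F}}{2}$ of $\sigma(x_0)$ on which $\prolong{\Omega_{\mc P}}{2}$ is holomorphic. By flatness there exists a primitive $\Phi:V\to \PSL(q+1,\C)$, and then $\Phi\circ \sigma:\sigma^{-1}(V)\to \PSL(q+1,\C)$ is a primitive of $\sigma^*\prolong{\Omega_{\mc P}}{2}$. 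By Lemma \ref{L: psl structure of the prolongation}\ref{I: primitives of the psl structure of the prolongation}, $\Phi$ respects the fibration $V\to X$ and the induced map $\phi:\pi(V)\to \Pj^q$ lies in $\mc P$. Because $\pi\circ\sigma=\id_X$, the map induced by $\Phi\circ\sigma$ after projecting to $G/H=\Pj^q$ equals $\phi$, which is a foliated submersion for $\F$. Hence the kernel of $\sigma^*\prolong{\Omega_{\mc P}}{2} \mod \h$ coincides with $T_{\F}$ on $\sigma^{-1}(V)$; since this holds on a dense open and $T_{\F}$ is saturated in $T_X$, it holds on all of $X$.

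Compatibility with $\mc P$ is then immediate: any other primitive of $\sigma^*\prolong{\Omega_{\mc P}}{2}$ on a connected open set differs from $\Phi\circ\sigma$ by left multiplication by an element of $\PSL(q+1,\C)$, and $\mc P$ is closed under such left multiplications, so the induced map still belongs to $\mc P$, which is exactly the equivalence defined at the end of Section~\ref{Section: Foliations}. The only subtle step is the identification of the kernel of the pullback with $T_{\F}$; this is where the hypothesis $\pi\circ\sigma=\id$ is essential and where Lemma \ref{L: psl structure of the prolongation} is doing all the real work.
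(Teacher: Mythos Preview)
Your proposal is correct and follows essentially the same strategy as the paper: both arguments reduce everything to Lemma~\ref{L: psl structure of the prolongation} and the observation that primitives of $\sigma^*\prolong{\Omega_{\mc P}}{2}$ are of the form $\Phi\circ\sigma$ for primitives $\Phi$ of $\prolong{\Omega_{\mc P}}{2}$. The only organizational difference is that the paper identifies $\ker\bigl(\sigma^*\prolong{\Omega_{\mc P}}{2}\bmod\h\bigr)$ with $T_{\F}$ by appealing to Item~\ref{I: psl structure of the prolongation restricted to the relative tangent bundle} of the lemma (the inclusion $\prolong{\Omega_{\mc P}}{2}(T_{\prolong{X_{\F}}{2}/X})\subset\h\otimes\germe(D)$ yields a morphism $\Omega':\pi^*T_X\to\g/\h\otimes\germe(D)$ with kernel $\pi^*T_{\F}$, and then one pulls back by $\sigma$), whereas you extract the same conclusion from Item~\ref{I: primitives of the psl structure of the prolongation} via the induced submersion $\phi\in\mc P$; these are two sides of the same coin.
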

\begin{proof}
    Let us first verify that $\sigma^*\prolong{\Omega_{\mc P}}{2}$ defines a singular transversely projective structure for $\F$. Since by Lemma \ref{L: psl structure of the prolongation}, Item \ref{I: psl structure of the prolongation restricted to the relative tangent bundle}, we have that $\prolong{\Omega_{\mc P}}{2}\left(T_{\prolong{X_{\F}}{2}/X} \right) \subset \h \otimes \germe_{\prolong{X_{\F}}{2}}(D)$, then $\prolong{\Omega_{\mc P}}{2}$ induces an $\germe_X$-linear morphism $\Omega': \pi^*T_X \rightarrow \g/\h \otimes \germe_{\prolong{X_{\F}}{2}}$ such that $\ker(\Omega') = \pi^*T_{\F}$. Applying $\sigma^*$, we obtain the following commutative diagram
    \begin{equation}
        \begin{tikzpicture}
            \matrix(m)[matrix of math nodes, column sep =2em, row sep = 2em]
            {
             & & & \g \otimes \germe_{\prolong{X_{\F}}{2}}(D) \\
            0 & T_{\F} & T_X & \g/\h \otimes \germe_{X}(\sigma^*D) \\
            };
            \path[->]
            (m-2-1) edge (m-2-2)
            (m-2-2) edge (m-2-3)
            (m-2-3) edge node[shift={(-0.3,0.3)}]{$\sigma^*\prolong{\Omega_{\mc P}}{2}$} (m-1-4) edge node[below]{$\sigma^*\Omega'$} (m-2-4)
            (m-1-4) edge (m-2-4)
            ;
        \end{tikzpicture},
    \end{equation}
    and therefore $\sigma^*\prolong{\Omega_{\mc P}}{2}$ defines a singular transversely projective structure for $\F$. It remains to verify that this projective structure is compatible to $\mc P$.

    Let $\Phi: U \rightarrow \PSL(q+1,\C) $ be a generic primitive of $\prolong{\Omega_{\mc P}}{2}$. Then, $\Phi$ induces the primitive $\Phi \circ \sigma: \pi(U) \rightarrow \PSL(q+1,\C)$ for $\sigma^*\prolong{\Omega_{\mc P}}{2}$. By Lemma \ref{L: psl structure of the prolongation}, Item \ref{I: primitives of the psl structure of the prolongation}, $\pi \circ \Phi \circ \sigma: \pi(U) \rightarrow \Pj^q$ belongs to the projective atlas $\mc P$. Therefore, $\sigma^*\prolong{\Omega_{\mc P}}{2}$ is compatible with $\mc P$.
\end{proof}

\subsection{Proof of Lemma \ref{L: trivial isotropy}}\label{S: proof of the action}

Since the action of $G$ on $\prolongPjntwo$ is equivariant with the projection $\pi:\prolongPjntwo \rightarrow \Pj^n$, it follows that $G_q \in G_{\pi(q)}$ for every $q\in \prolongPjntwo$.  Hence, instead of considering the action of $G$ on $\prolongPjntwo$, we can consider the action of the isotropy group $G_p\subset G$ of $p\in \Pj^n$ over the fiber $\prolongPjntwo_p \simeq \C^{n^2 + n}$. We want to conclude that for a generic $q\in \prolongPjntwo_p$, the isotropy group $G_q \subset G_p$ is trivial. In order to do that, we will first need to describe the action of $G_p$ on $\prolongPjntwo$ in coordinates.

\subsubsection*{Describing the action of $G_p$ on $\Pj^n$ in coordinates}

Let $(x_0:x_1: \cdots :x_n)$ homogeneous coordinates on $\Pj^n$ and let $M=(m_{ij})\in \SL(n+1,\C)$ be a matrix representing the action of $g\in G$ on $\Pj^n$, that is, the action of $g$ on $\Pj^n$ is given by
\[
L_g(x_0: \cdots:x_i: \cdots :x_n) = \left(\sum_{j=0}^n m_{0j} \cdot x_j: \cdots: \sum_{j=0}^n m_{ij}\cdot x_j: \cdots \sum_{j=0}^n m_{nj} \cdot x_j\right).
\]
With no loss in generality, let us suppose that $p=(1:0: \cdots:0)$, and consider the affine coordinates $\{y_i =x_i/x_0\}$ on $U= \{x_0\neq 0 \}$. For every $g\in G_p$, the automorphism $L_g: \Pj^n \rightarrow \Pj^n$ induces a map $L_g: (\C^n,0) \rightarrow (\C^n,0)$ with respect to the affine coordinates $\mbf y = (y_1,\ldots,y_n)$. Let us describe $L_g$ and its derivatives in these coordinates.

Since $m_{00} \neq 0$ for every $g\in G_p$, we can suppose $m_{00}=1$. Moreover, we have $G_p = \{g \in G; m_{i0}=0, 1\le i \le n \}$, and hence, for every $g\in G_p$, we have that
\[
L_g(\mbf y) = \left(\frac{\sum_{j=1}^n m_{1j} \cdot y_j}{1 + \sum_{j=1}^n m_{0j} \cdot y_j}, \ldots,\frac{\sum_{j=1}^n m_{ij} \cdot y_j}{1 + \sum_{j=1}^n m_{0j} \cdot y_j}, \ldots, \frac{\sum_{j=1}^n m_{nj} \cdot y_j}{1 + \sum_{j=1}^n m_{0j} \cdot y_j}  \right).
\]
Using expansion in power series, we can formally write
\begin{align*}
    \nonumber g_i(\mbf y) & := \frac{\sum_{j=1}^n m_{ij} \cdot y_j}{1 + \sum_{j=1}^n m_{0j} \cdot y_j} \\
    \label{Eq: power series expression of the action of psl on the projective space} & = \sum_{j=1}^n m_{ij} \cdot y_j  - \sum_{1\le j_1,j_2\le n} \left(m_{ij_1} \cdot m_{0j_2}\right) \cdot y_{j_1} \cdot y_{j_2} + \mathrm{h.o.t.},
\end{align*}
for $1\le i \le n$. In particular, it follows that 
\begin{equation}\label{Eq: first derivatives of the action of psl on the projective space}
    \frac{\de g_i}{\de y_j}(0)= m_{ij}, 1\le i,j \le n,
\end{equation}
and
\begin{equation}\label{Eq: second derivatives of the action of psl on the projective space}
    \frac{\de^2 g_i}{ \de y_{j_1}\de y_{j_2} }(0) = - m_{ij_1} \cdot m_{0j_2} + m_{ij_2} \cdot m_{0j_1}, 1\le i,j_1,j_2 \le n.
\end{equation}

\subsubsection*{Describing the action of $G_p$ on $\prolongPjntwo$ in coordinates}

Keeping the notation above, let $q\in \prolongPjntwo$ such that $\pi(q) = p = (0,\ldots, 0)$. Let us also introduce the following notation: for every $g\in G_p$, represented by the matrix $M = (m_{ij}) \in SL(n+1,\C)$, the inverse $g^{-1} \in G_p$ will be represented by the matrix $M^{-1} = (m_{ij}^{-1})$. 

Let us consider for $\prolongPjntwo$ the natural system of coordinates $(\mbf y, \mbf z, \mbf w)$ described in Example \ref{Ex: example of the first prolongation of the pushfoward}. By the calculations we presented in this example, it follows that 
\[
\prolong{L_g}{2}(\mbf y, \mbf z, \mbf w) = (L_g(\mbf y),\ldots, (\prolong{L_g}{2})_j(\mbf y, \mbf z), \ldots, (\prolong{L_g}{2})_{j_1j_2}(\mbf y, \mbf z,\mbf w)),
\]
where, by Equations (\ref{Eq: first derivatives of the action of psl on the projective space}) and (\ref{Eq: second derivatives of the action of psl on the projective space}),
\[
(\prolong{L_g}{2})_j(\mbf y, \mbf z) = \sum_{i=1}^n m_{ji} \cdot y_i
\]
and
\begin{align*}
    (\prolong{L_g}{2})_{j_1j_2}(\mbf y, \mbf z,\mbf w) & = \sum_{i_1,i_2=1}^n m^{-1}_{i_1j_1} \cdot m_{j_2i_2}\cdot w_{i_1i_2} +\sum_{i=1}^n \sum_{k=1}^n m^{-1}_{kj_1} \cdot (m_{j_2k}\cdot m_{0i} + m_{j_2i} \cdot m_{0k}) \cdot z_i \\
    & = \sum_{i_1,i_2=1}^n m^{-1}_{i_1j_1} \cdot m_{j_2i_2}\cdot w_{i_1i_2} - \left(\sum_{k=1}^n m_{kj_1}^{-1}\cdot m_{j_2k}\right)\cdot \left( \sum_{i=1}^n m_{0i} \cdot z_i\right) \\ 
        & - \left(\sum_{k=1}^n m^{-1}_{kj_1}\cdot m_{0k}\right)\left(\sum_{i=1}^n m_{j_2i} \cdot z_i\right)  \\
        & = \sum_{i_1,i_2=1}^n m^{-1}_{i_1j_1} \cdot m_{j_2i_2}\cdot w_{i_1i_2} - \delta_{j_1,j_2} \cdot \left(\sum_{i=1}^n m_{0i} \cdot z_i\right)  \\
        & - \left(\sum_{k=1}^n m^{-1}_{kj_1}\cdot m_{0k}\right)\left(\sum_{i=1}^n m_{j_2i} \cdot z_i\right)
\end{align*}

Let us describe the above expressions using matrices. Let us denote the matrix $(m_{ij})_{1\le i,j \le n}$ by $A$, and the column $(m_{01}, \ldots, m_{0n})^T$ by $B$. Let us also consider the coordinates $\mbf z$ as a vector $Z$, and the coordinates $\mbf w$ as matrix $W$. With this notation, the action $\prolong{L_g}{2}$ it is given by 
\[
    M \cdot (Z,W) = \left(A \cdot Z,(A^T)^{-1} \cdot W \cdot  A^T - B^T\cdot Z\cdot \mathrm{Id} - (A^T)^{-1}\cdot B  \cdot (A\cdot Z)^T \right),
\]
and finding $g\in G_p$ such that $g \cdot q = q$ is equivalent to solving the system 
\begin{equation}\label{Eq: proof of the action, system determining the isotropy group before simplification}
    \begin{cases}
        A\cdot Z = Z, \\
        (A^T)^{-1} \cdot W \cdot  A^T - B^T\cdot Z\cdot \mathrm{Id} - (A^T)^{-1}\cdot B  \cdot (A\cdot Z)^T = W.
    \end{cases}
\end{equation}

\subsubsection*{Simplifying the System of Equations (\ref{Eq: proof of the action, system determining the isotropy group before simplification}) }

Remark that it is not necessary to solve the solutions of the System of Equations (\ref{Eq: proof of the action, system determining the isotropy group before simplification}), instead it is enough to verify that for a generic point $q\in \prolongPjntwo_p$, the system does not have any non-trivial solution. The next claim shows how we simplify the system.   

\begin{claim}\label{Claim: proof of the action 1}
    Let $M=(A,B)$ be a solution of the System of Equations (\ref{Eq: proof of the action, system determining the isotropy group before simplification}). Then $B^T \cdot Z =0$
\end{claim}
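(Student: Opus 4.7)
The plan is to exploit the scalar nature of $B^T \cdot Z$ by taking the trace of the second equation in (\ref{Eq: proof of the action, system determining the isotropy group before simplification}) after feeding in the first equation. Since $B^T Z$ is a scalar, the term $B^T Z \cdot \mathrm{Id}$ contributes $n \cdot B^T Z$ to the trace, and I expect everything else to collapse nicely using the cyclic invariance of the trace together with the identity $A Z = Z$.

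First, I would rewrite the second equation using $A Z = Z$ as
\[
    (A^T)^{-1} \cdot W \cdot A^T - B^T Z \cdot \mathrm{Id} - (A^T)^{-1} \cdot B \cdot Z^T = W.
\]
Taking the trace, the cyclic property gives $\mathrm{tr}((A^T)^{-1} W A^T) = \mathrm{tr}(W)$, so these terms cancel on the two sides. The middle term contributes $n \cdot B^T Z$ since $\mathrm{Id}$ here is the $n \times n$ identity. It remains to evaluate $\mathrm{tr}((A^T)^{-1} B Z^T) = Z^T (A^T)^{-1} B$.

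The key step is to simplify $Z^T (A^T)^{-1}$. From $A Z = Z$ we get $A^{-1} Z = Z$, and transposing yields $Z^T (A^T)^{-1} = Z^T$. Consequently $Z^T (A^T)^{-1} B = Z^T B = B^T Z$. Substituting into the traced equation gives
\[
    \mathrm{tr}(W) - n \, B^T Z - B^T Z = \mathrm{tr}(W),
\]
that is, $(n+1) B^T Z = 0$, and hence $B^T Z = 0$ as claimed. The only step where something could go wrong is the inversion-and-transposition move in the third paragraph; once that passes, the rest is purely formal, so I do not anticipate any serious obstacle.
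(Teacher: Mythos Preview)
Your proof is correct and follows essentially the same approach as the paper: take the trace of the second equation, use cyclic invariance to cancel the $(A^T)^{-1} W A^T$ term against $W$, and show that the remaining two terms contribute $(n+1)\,B^T Z$. The only cosmetic difference is that the paper handles $\tr\bigl((A^T)^{-1} B (AZ)^T\bigr)$ by writing $(AZ)^T = Z^T A^T$ and cancelling $A^T$ against $(A^T)^{-1}$ via cyclicity, whereas you substitute $AZ=Z$ first and then use $Z^T(A^T)^{-1}=Z^T$; both routes are immediate.
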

\begin{proof}
    The proof is simply calculating the trace of $W$. Indeed,
    \begin{align*}
        \tr(W) & = \tr((A^T)^{-1} \cdot W \cdot  A^T) - \tr(B^T\cdot Z\cdot \mathrm{Id}) - \tr ((A^T)^{-1}\cdot B  \cdot (A\cdot Z)^T) \\
        & = \tr(W) - n \cdot \tr(B^T \cdot Z) - \tr (B \cdot Z^T) \\
        & = \tr(W) - (n+1) \cdot B^T \cdot Z,
    \end{align*}
    and thus $B^T \cdot Z=0$. 
\end{proof}

For every $q=(Z,W)\in \prolongPjntwo_p$, let us consider the system of equations
\begin{equation}\label{Eq: proof of the action, system determining the isotropy group}
    \begin{cases}
        A\cdot Z = Z, \\
        B\cdot Z^T = W \cdot A^T - A^T \cdot W.
    \end{cases}
\end{equation}
By the Claim above, if the System (\ref{Eq: proof of the action, system determining the isotropy group}) admits no non-trivial solutions, the same is true for the System (\ref{Eq: proof of the action, system determining the isotropy group before simplification}). That is, if for a given $q=(Z,W)$ the only solution of the System (\ref{Eq: proof of the action, system determining the isotropy group}) is $A=\Id$ and $B=0$, then $G_q$ is trivial.

\subsubsection*{The incidence variety associate to the System (\ref{Eq: proof of the action, system determining the isotropy group})}

Our problem now is to prove that for a generic $q=(Z,W)$, the only solution of the System (\ref{Eq: proof of the action, system determining the isotropy group}) is $(\Id,0)$. Let $U_1 = G_p - \{\Id\}$. Observe that fixing $Z=0$, the System (\ref{Eq: proof of the action, system determining the isotropy group}) becomes $W\cdot A^T= A^T \cdot W$, and thus it always admit non-trivial solutions (e.g., powers of $A^T$). Hence, let us consider the open subset $U_2 = \{Z\neq 0\} \subset \prolongPjntwo_p$, and let us consider the incidence variety  
\[
\Gamma = \{(g,q); g\text{ is solution of the System (\ref{Eq: proof of the action, system determining the isotropy group}) associated to } q  \} \subset U_1 \times U_2,
\]
which is a closed subvariety of $U_1\times U_2$.  Let $\pi_1: \Gamma \rightarrow U_1\times (\C^n)^*$ the projection given by $(A,B,Z,W) \mapsto (A,B,Z)$ and let $\pi_2: \Gamma\rightarrow U_2$ be the projection given by $(A,B,Z,W) \mapsto (Z,W)$. Let us use the projection $\pi_1$ to calculate the dimension of $\Gamma$.  By the Theorem on the Dimension of Fibers,
\begin{equation}\label{Eq: proof of the action, dimension of the incidence space}
    \dim \Gamma = \dim \pi_1(\Gamma) + \dim \pi_1^{-1}(\gamma),
\end{equation}
where $\gamma\in \pi_1(\Gamma)$ is a generic point of $\pi_1(\Gamma)$. To calculate $\dim \Gamma$, we need to determine an open subset of $\pi_1(\Gamma)$. 

Given $\gamma = (A,B,Z) \in \pi_1(\Gamma)$, there exists $W$ such that $(A,B)$ is a solution of the System (\ref{Eq: proof of the action, system determining the isotropy group}) associated to $(Z,W)$, and from this we conclude that:
\begin{enumerate}[label = (\roman*)]
    \item  $\det (A - \Id) =0$, because $A$ admits an eigenvector $Z$ with eigenvalue $1$; and
    \item $\innerproduct{B}{Z} = \tr(B \cdot Z^T) = \tr(W \cdot A^T - A^T \cdot W) =0$.
\end{enumerate}
Let $H_1 = \{A \in \SL(n,\C) ; \det (A - \Id) =0\} \subset \SL(n,\C)$ be a codimension one closed subvariety of $\SL(n,\C)$; and $H_2 = \{(B,Z) ; \innerproduct{B}{Z}=0 \} \subset \C^n \times (\C^n)^*$ be a  codimension one closed subvariety of $\C^n \times (\C^n)^*$. Then,
\[
\pi_1(\Gamma) \subset H_1 \times H_2 \subset U_1 \times (\C^n)^*.
\]
Let $V\subset H_1$ be the Zariski dense open subset of $H_1$ corresponding to matrices that have $n$ different eigenvalues. 

\begin{claim}\label{Claim: proof of the action 3}
    $V\times H_2 \subset \pi_1(\Gamma)$, and for every $\gamma \in V\times H_2$, $\pi_1^{-1}(\gamma) \simeq \C^n$.
\end{claim}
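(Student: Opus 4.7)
The plan is to treat the second equation of System \eqref{Eq: proof of the action, system determining the isotropy group}, namely $B \cdot Z^T = W \cdot A^T - A^T \cdot W$, as a linear system in $W \in M_n(\C)$, subject to the first constraint $A \cdot Z = Z$. This reduces both assertions of the claim to the study of a single commutator operator.

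First I would analyze the commutator map $\phi_A \colon M_n(\C) \to M_n(\C)$, $\phi_A(W) := W A^T - A^T W$. Since $A \in V$ has $n$ distinct eigenvalues $\lambda_1, \ldots, \lambda_n$ with $\lambda_1 = 1$ (because $A \in H_1$), one may write $A^T = P D P^{-1}$ with $D = \mathrm{diag}(\lambda_1, \ldots, \lambda_n)$. Conjugation by $P$ turns $\phi_A$ into $W' \mapsto W'D - DW'$, which scales the $(i,j)$-entry of $W'$ by $\lambda_j - \lambda_i$. Because the eigenvalues are pairwise distinct, $\ker \phi_A$ is exactly the $n$-dimensional subspace of matrices diagonalized by $P$, while $\mathrm{im}(\phi_A)$ consists precisely of those $M$ such that $P^{-1}MP$ has vanishing diagonal.

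Next I would verify that $B Z^T \in \mathrm{im}(\phi_A)$ under the hypotheses. Let $u_1, \ldots, u_n$ be the columns of $P$ and $v_1^T, \ldots, v_n^T$ the rows of $P^{-1}$, so that $A^T u_i = \lambda_i u_i$, $A v_i = \lambda_i v_i$, and $v_i^T u_j = \delta_{ij}$. The $i$-th diagonal entry of $P^{-1}(B Z^T)P$ factors as $(v_i^T B)(Z^T u_i)$. Since the $1$-eigenspace of $A$ is the line $\C \cdot v_1$ and $A \cdot Z = Z$ forces $Z = c\, v_1$ for some scalar $c$, biorthogonality gives $Z^T u_i = c\,\delta_{1i}$. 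Hence only the $i=1$ entry is potentially nonzero, and it equals $c \cdot v_1^T B$, which is proportional to $\innerproduct{Z}{B}$ and vanishes by the hypothesis $(B,Z) \in H_2$. Therefore $B Z^T \in \mathrm{im}(\phi_A)$, showing $(A,B,Z) \in \pi_1(\Gamma)$.

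Finally, the fibre $\pi_1^{-1}((A,B,Z))$ is the affine space of solutions of $\phi_A(W) = B Z^T$, a translate of $\ker \phi_A$, which has dimension $n$ by the first step; this yields $\pi_1^{-1}((A,B,Z)) \simeq \C^n$. The main subtlety is aligning the hypothesis $(B,Z) \in H_2$ with the single obstructing diagonal coefficient of $B Z^T$ in the $P$-eigenbasis; once the eigenbasis is introduced, the other $n-1$ diagonal entries vanish automatically because $Z$ lies in the one-dimensional $1$-eigenspace of $A$, so the condition $\innerproduct{Z}{B}=0$ provides exactly the one scalar identity needed.
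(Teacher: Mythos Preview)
Your argument is correct and follows essentially the same approach as the paper: both diagonalize $A$ (you via an abstract eigenbasis $P$ for $A^T$, the paper by choosing coordinates so that $Z=(1,0,\dots,0)$ and $A$ is diagonal), reduce the second equation $BZ^T=WA^T-A^TW$ to a linear system in $W$ whose kernel is the $n$-dimensional space of matrices commuting with $A^T$, and use $\langle B,Z\rangle=0$ together with $AZ=Z$ to verify that $BZ^T$ lies in the image. Your presentation via the commutator operator $\phi_A$ is a coordinate-free rephrasing of the paper's explicit matrix computation, but the content is the same.
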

\begin{proof}
    Let $\gamma = (A,B,Z) \in V\times H_2$. There is no loss in generality in supposing that $Z=(1,0,\ldots,0)$ and that $A=(m_{ij})$ is diagonal, with $m_{11}=1$ and $m_{ii}=\lambda_i$. The condition $\innerproduct{B}{Z}=0$ means that $b_1=0$. With respect to this basis, the System (\ref{Eq: proof of the action, system determining the isotropy group}) is equivalent to
    \begin{multline*}
    \left(\begin{array}{cccc} 0 & 0 & \ldots & 0 \\
    b_2 & 0 & \ldots & 0 \\
    \vdots \\
    b_n & 0 & \ldots & 0 
    \end{array} \right) =  \left(\begin{array}{cccc} w_{11} & \lambda_2 \cdot w_{12} & \ldots & \lambda_n \cdot w_{1n} \\
    w_{21} & \lambda_2 \cdot w_{22} & \ldots & \lambda_n \cdot w_{2n} \\
    \vdots \\
    w_{n1} & \lambda_2 \cdot w_{n2} & \ldots & \lambda_n \cdot w_{nn} 
    \end{array} \right) \\ - \left(\begin{array}{cccc} w_{11} & w_{12} & \ldots & w_{1n} \\
    \lambda_2 \cdot w_{21} & \lambda_2 \cdot w_{22} & \ldots & \lambda_2\cdot w_{2n} \\
    \vdots \\
    \lambda_n \cdot w_{n1} & \lambda_n \cdot w_{n2} & \ldots & \lambda_n \cdot w_{nn} 
    \end{array} \right)
    \end{multline*}
    Since $\lambda_i \neq 1$, the solutions are $w_{i1} = b_i/(1-\lambda_i)$ for $2\le i\le n$, $w_{ij}=0$ for $j\neq 1$ and $i\neq j$, and $w_{ii} \in \C$ for $1\le i \le n$. Therefore, $(A,B,Z) \in \pi_1(\Gamma)$ and $\pi_1^{-1}(A,B,Z) \simeq \C^n$.  
\end{proof}

Hence, by Equation (\ref{Eq: proof of the action, dimension of the incidence space}), 
\[
\dim \Gamma = \dim H_1 + \dim H_2 + \dim \pi_1^{-1}(q) = (n^2-1-1) + (n+n-1) + n = n^2+2n-3
\]

Let us now consider the projection $\pi_2: \Gamma \rightarrow U_2$. Observe $q\notin \pi_2(\Gamma)$ implies that $G_q$ is trivial. Hence, the only thing that remains to conclude Proposition \ref{P: birational morphism} is that $\pi_2$ is not surjective.

\begin{claim}
    Let $q\in U_2$. If $\pi_2^{-1}(q)$ is non-empty, then $\dim \pi_2^{-1}(q)\ge n-1$. 
\end{claim}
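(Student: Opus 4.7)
My approach would be to adapt coordinates to $Z$ and reduce System~\ref{Eq: proof of the action, system determining the isotropy group} to a recursive block system. Since $Z \neq 0$, choose a basis with $Z = e_1 \in \C^n$. The condition $AZ = Z$ forces $A = \bigl(\begin{smallmatrix} 1 & a^T \\ 0 & A' \end{smallmatrix}\bigr)$ with $A' \in \GL(n-1)$ and $a \in \C^{n-1}$. Partitioning $W = \bigl(\begin{smallmatrix} w_{11} & w_{12}^T \\ w_{21} & W' \end{smallmatrix}\bigr)$ compatibly and expanding $BZ^T = WA^T - A^T W$ block by block shows that the system is equivalent to three coupled conditions on $(A', a)$: (I) $A' w_{12} = w_{12}$, (II) $W'(A')^T - (A')^T W' = a \, w_{12}^T$, and (III) $w_{12}^T a = 0$, while $B$ is then determined in closed form from the remaining block.

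I would then split into two cases. If $w_{12} = 0$, conditions (I) and (III) are vacuous and (II) reduces to $[(A')^T, W'] = 0$. The centralizer of $W'$ in $\GL(n-1)$ contains the commutative algebra generated by $W'$ and so has dimension at least $n-1$, and $a \in \C^{n-1}$ is free, giving a solution set of dimension at least $2(n-1) \geq n-1$. If instead $w_{12} \neq 0$, I would change basis on $\C^{n-1}$ so that $w_{12} = e_1$, write $A' = \bigl(\begin{smallmatrix} 1 & b^T \\ 0 & A'' \end{smallmatrix}\bigr)$ and $a = (0, a'')^T$, and decompose (II) into four sub-blocks. Three of those reproduce the very same three-condition system for the smaller pair $(A'', b)$ with parameters $(W'', w_{12}')$ taken from the bottom-right of $W'$, while the fourth determines $a''$ uniquely in terms of $(A'', b)$ without imposing any new constraint. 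Hence the solution set at size $n-1$ is in natural bijection with the solution set at size $n-2$, and induction applies.

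Iterating this reduction, either some iterated vector $w_{12}^{(k)}$ vanishes and the base case at size $n-1-k$ produces a family of dimension at least $2(n-1-k)$, or the iteration never produces a zero, in which case only the trivial solution $(A, B) = (\Id, 0)$ exists, contradicting the non-emptiness of $\pi_2^{-1}(q)$. The main obstacle is making the parameter count $\geq n-1$ uniform across all strata: the naive count $2(n-1-k)$ falls below $n-1$ as soon as $k > (n-1)/2$, so in those deep strata I would exploit the fact that the chain of conditions $w_{12}^{(0)}, \ldots, w_{12}^{(k-1)} \neq 0$ together with $w_{12}^{(k)} = 0$ confines $W^{(k)}$ to a special locus where its centralizer jumps upward, restoring the required dimension bound and concluding the proof.
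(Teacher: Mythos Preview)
Your recursive reduction is sound and captures the structure of System~(\ref{Eq: proof of the action, system determining the isotropy group}) cleanly: with $Z=e_1$ and the block decompositions you set up, the constraints on $(A',a)$ are your (I) and (II), $B$ is then determined, and (I)--(II) together form precisely the size-$(n-1)$ instance of the same system with data $(W',w_{12})$. (Your condition (III) is not actually a consequence of System~(\ref{Eq: proof of the action, system determining the isotropy group}); the $(1,1)$ block only reads $b_1=w_{12}^{T} a$, determining $b_1$ rather than constraining $a$. Dropping (III) can only help your count.) Iterating until the vector datum first vanishes, say when the residual block has size $m$, gives a solution space of dimension $\dim C(W_{\mathrm{res}})+m\ge 2m$, exactly as you say.

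The gap is precisely where you flag it, and the proposed rescue does not work. The accumulated nonvanishing conditions constrain only the rows of $W$ lying \emph{above} the residual block; they impose nothing on that block itself. So the residual block can be a generic $m\times m$ matrix with centralizer of the minimal dimension $m$, and no jump occurs.

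In fact the claim as stated appears to be false for $n\ge 4$. Take $n=4$, $Z=e_1$, and $W=E_{12}+E_{23}$. Your recursion runs all the way down to a $1\times 1$ residual block, and a direct check confirms that the solutions of System~(\ref{Eq: proof of the action, system determining the isotropy group}) with $A\in\GL(4,\C)$ form exactly a $2$-dimensional family: every entry of $A$ is forced except $a_{34}$ and $a_{44}\neq 0$, and $B=0$. Thus $\pi_2^{-1}(q)$ is nonempty of dimension $2<3=n-1$. The paper's own argument has the same defect from a different angle: it asserts that $(N_i,0)=(E_{ii},0)$ for $i\ge 2$ solve the associated homogeneous system, but $WE_{ii}-E_{ii}W$ is not supported in the first column for general $W$, so they do not.
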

\begin{proof}
    Let us suppose $Z = (1,0,\ldots,0)$, and let $(A_0,B_0) \in \pi_2^{-1}(q)$. Let us first determine solutions of the System (\ref{Eq: proof of the action, system determining the isotropy group}) at $\mathrm{M}(n,\C) \times \C^n$.  For every $\mbf t=(t_1,\ldots,t_n) \in \C^n$,
    \[
    (A_{\mbf t},B_{\mbf t}) = ( t_1\cdot A_0 + (1-t_1)\Id + t_2 \cdot N_2 + \cdots + t_n \cdot N_n, t_1 \cdot B_0)  
    \]
    is a solution of the System of Equations (\ref{Eq: proof of the action, system determining the isotropy group}), where $N_i=(n^i_{i_1,i_2})$ is the matrix where the only non-vanishing entry is $n^i_{i,i}=1$.
    Since $\det A_0 \neq 0$, there is an Zariski dense open subset $U\subset \C^n$ such that $(A_{\mbf t},B_{\mbf t}) \in \GL(n,\C) \times \C^n$ for all $\mbf t \in U$. Thus,
    \[
        \dim \{(A,B) \in \GL(n+1,\C) \times \C^n ; (A,B) \text{ solution of Equation }(\ref{Eq: proof of the action, system determining the isotropy group})  \} \ge n.
    \]
    Since $\SL(n,\C)$ has codimension one in $\GL(n,\C)$, it follows that $\dim \pi_2^{-1}(q)\ge n-1$.
\end{proof}

\subsubsection*{Conclusion}

Suppose by contradiction that $\pi_2$ is surjective. Then, by the Theorem on the Dimension of Fibers, for a generic point $q\in U_1$,
\[
\dim \Gamma = \dim \pi_2^{-1}(q) + \dim U_1 \ge (n-1) + n^2+n = n^2 +2n-1
\]
Since we already calculated that $\dim \Gamma = n^2+2n-3$, this leads to a contradiction. Thus, $\pi_2$ is not surjective. Therefore, for a generic element $q\in \prolongPjntwo$, $G_q$ is trivial. This concludes the proof. \qed


\begin{thebibliography}{10}

\bibitem{abate-bracci-filippo-suwa-tovena-13-zbMATH06176462}
M.~Abate, F.~Bracci, T.~Suwa, and F.~Tovena, \emph{Localization of {Atiyah} classes}, Rev. Mat. Iberoam. \textbf{29} (2013), no.~2, 547--578 (English).

\bibitem{atiyah-57-zbMATH03128044}
M.~F. Atiyah, \emph{Complex analytic connections in fibre bundles}, Trans. Am. Math. Soc. \textbf{85} (1957), 181--207 (English).

\bibitem{baum-bott-70-zbMATH03308090}
P.~F. Baum and R.~Bott, \emph{On the zeroes of meromorphic vector-fields}, Essays on Topology and Related Topics: Memoires d{\'e}di{\'e}s {\`a} Georges de Rham, Springer, Berlin Heidelberg, 1970, pp.~29--47 (English).

\bibitem{baum-bott-72-zbMATH03423310}
\bysame, \emph{Singularities of holomorphic foliations}, J. Differ. Geom. \textbf{7} (1972), 279--342 (English).

\bibitem{berthelot-78-zbMATH03595321}
P.~Berthelot and A.~Ogus, \emph{Notes on crystalline cohomology}, Math. Notes (Princeton), vol.~21, Princeton University Press, Princeton, NJ, 1978 (English).

\bibitem{biswas-02-zbMATH01786109}
I.~Biswas, \emph{Transversely projective structures on a transversely holomorphic foliation. {II}.}, Conform. Geom. Dyn. \textbf{6} (2002), 61--73 (English).

\bibitem{bott-67-zbMATH03286795}
R.~Bott, \emph{A residue formula for holomorphic vector-fields}, J. Differ. Geom. \textbf{1} (1967), 311--330 (English).

\bibitem{bott-67-zbMATH03235065}
\bysame, \emph{Vector fields and characteristic numbers}, Mich. Math. J. \textbf{14} (1967), 231--244 (English).

\bibitem{cerveau-lins-neto-loray-pereira-touzet-06-zbMATH05033717}
D.~Cerveau, A.~Lins-Neto, F.~Loray, J.~V. Pereira, and F.~Touzet, \emph{Algebraic reduction theorem for complex codimension one singular foliations}, Comment. Math. Helv. \textbf{81} (2006), no.~1, 157--169 (English).

\bibitem{cerveau-lins-neto-loray-pereira-touzet-07-zbMATH05202834}
\bysame, \emph{Complex codimension one singular foliations and {Godbillon}-{Vey} sequences}, Mosc. Math. J. \textbf{7} (2007), no.~1, 21--54 (English).

\bibitem{cousin-pereira-14-zbMATH06399599}
G.~Cousin and J.~V. Pereira, \emph{Transversely affine foliations on projective manifolds}, Math. Res. Lett. \textbf{21} (2014), no.~5, 985--1014 (English).

\bibitem{Deligne-70-zbMATH03385791}
P.~Deligne, \emph{Equations diff{\'e}rentielles {\`a} points singuliers r{\'e}guliers}, Lect. Notes Math., vol. 163, Springer, Cham, 1970 (French).

\bibitem{Godbillon-91-zbMATH00050350}
C.~Godbillon, \emph{Feuilletages: {\'e}tudes g{\'e}om{\'e}triques}, Prog. Math., vol.~98, Basel etc.: Birkh{\"a}user Verlag, 1991 (French).

\bibitem{ega-VI-zbMATH03245973}
A.~Grothendieck, \emph{{\'E}l{\'e}ments de g{\'e}om{\'e}trie alg{\'e}brique. {IV}: {\'E}tude locale des sch{\'e}mas et des morphismes de sch{\'e}mas ({Quatri{\`e}me} partie). {R{\'e}dig{\'e}} avec la colloboration de {J}. {Dieudonn{\'e}}}, Publ. Math., Inst. Hautes {\'E}tud. Sci. \textbf{32} (1967), 1--361 (French).

\bibitem{gunning-66-zbMATH03280321}
R.C. Gunning, \emph{Lectures on {Riemann} surfaces}, Math. Notes (Princeton), Princeton University Press, Princeton, NJ, 1966 (English).

\bibitem{gunning-67-zbMATH03233043}
\bysame, \emph{Special coordinate coverings of {Riemann} surfaces}, Math. Ann. \textbf{170} (1967), 67--86 (English).

\bibitem{haraoka-2020-zbMATH07243235}
Y.~Haraoka, \emph{Linear differential equations in the complex domain. {From} classical theory to forefront}, Lect. Notes Math., vol. 2271, Springer, Cham, 2020 (English).

\bibitem{humphreys-zbMATH03508744}
J.~Humphreys, \emph{Linear algebraic groups}, Grad. Texts Math., vol.~21, Springer, Cham, 1975 (English).

\bibitem{Katz-70-zbMATH03350023}
N.~Katz, \emph{Nilpotent connections and the monodromy theorem: {Applications} of a result of {Turrittin}}, Publ. Math., Inst. Hautes {\'E}tud. Sci. \textbf{39} (1970), 175--232 (English).

\bibitem{kobayashi-95-zbMATH00758273}
S.~Kobayashi, \emph{Transformation groups in differential geometry}, reprint of the 1972 ed., Class. Math., Springer-Verlag, Berlin, 1995 (English).

\bibitem{kobayashi-nagano-64-zbMATH03191725}
S.~Kobayashi and T.~Nagano, \emph{On projective connections}, J. Math. Mech. \textbf{13} (1964), 215--235 (English).

\bibitem{loray-marin-09-zbMATH05718008}
F.~Loray and D.~Mar{\'{\i}}n~P{\'e}rez, \emph{Projective structures and projective bundles over compact {Riemann} surfaces}, \'Equations diff\'erentielles et singularit\'es. En l'honneur de J. M. Aroca, Astérisque, no. 323, Soci{\'e}t{\'e} Math{\'e}matique de France, Paris, 2009, pp.~223--252 (English).

\bibitem{loray-pereira-touzet-16-zbMATH06670708}
F.~Loray, J.~V. Pereira, and F.~Touzet, \emph{Representations of quasi-projective groups, flat connections and transversely projective foliations}, J. {\'E}c. Polytech., Math. \textbf{3} (2016), 263--308 (English).

\bibitem{saunders-89-zbMATH00042050}
D.~J. Saunders, \emph{The geometry of jet bundles}, Lond. Math. Soc. Lect. Note Ser., vol. 142, Cambridge University Press, 1989 (English).

\bibitem{seade-suwa-96-zbMATH00892565}
J.~A. Seade and T.~Suwa, \emph{A residue formula for the index of a holomorphic flow}, Math. Ann. \textbf{304} (1996), no.~4, 621--634 (English).

\bibitem{sharpe-97-zbMATH00914851}
R.~W. Sharpe, \emph{Differential geometry: {Cartan}'s generalization of {Klein}'s {Erlangen} program. {Foreword} by {S}. {S}. {Chern}}, Grad. Texts Math., vol. 166, Springer, Berlin, 1997 (English).

\bibitem{suwa-98-zbMATH01194470}
T.~Suwa, \emph{Indices of vector fields and residues of singular holomorphic foliations}, Paris: Hermann, 1998 (English).

\end{thebibliography}

\providecommand{\bysame}{\leavevmode\hbox to3em{\hrulefill}\thinspace}
\providecommand{\MR}{\relax\ifhmode\unskip\space\fi MR }
\providecommand{\MRhref}[2]{%
  \href{http://www.ams.org/mathscinet-getitem?mr=#1}{#2}
}
\providecommand{\href}[2]{#2}

\end{document}